\documentclass[]{amsart}
\usepackage{amsfonts, amsmath, amssymb, amscd, amsthm, mathrsfs}
\usepackage{amsmath, amsthm, amscd, amsfonts, amssymb, graphicx}
\usepackage[bookmarksnumbered, plainpages]{hyperref}

\newtheorem{theorem}{Theorem}[section]
\newtheorem{lemma}[theorem]{Lemma}
\newtheorem{proposition}[theorem]{Proposition}
\newtheorem{corollary}[theorem]{Corollary}
\newtheorem{definition}[theorem]{Definition}

\textwidth 15.6 cm \textheight 23 cm

\oddsidemargin 0.5cm \evensidemargin 0.5cm

\theoremstyle{example}

\theoremstyle{remark}
\newtheorem{remark}{Remark}[section]

\begin{document}

\title
[Gradient Flows of Higher Order Yang-Mills-Higgs Functionals]{Gradient Flows of Higher Order Yang-Mills-Higgs Functionals}

\author{Pan Zhang}
\address{School of Mathematics, Sun Yat-sen University, Guangzhou 510275, P.R. China}
\email{zhangpan5@mail.sysu.edu.cn}
\thanks {2020 Mathematics Subject Classification. 58C99; 58E15; 81T13}

\keywords{higher order Yang-Mills-Higgs flow; higher order Yang-Mills-Higgs functional; energy estimates; $L^2$ estimates; blow-up analysis.}

\begin{abstract}
In this paper, we define a family of functionals generalizing the Yang-Mills-Higgs functional on a closed Riemannian manifold. Then we prove the short time existence of the corresponding gradient flow by a gauge fixing technique.  The lack of maximal principle for the higher order operator brings us a lot of inconvenience during the estimates for the Higgs field. We observe that the $L^2$-bound of the Higgs field is enough for energy estimates in $4$ dimension, and we show that, provided the order of derivatives, appearing in the higher order Yang-Mills-Higgs functionals, is strictly greater than 1, solutions to the gradient flow do not hit any finite time singularities. As for the Yang-Mills-Higgs $k$-functional with Higgs self-interaction, we show that, provided $\dim(M)<2(k+1)$, the associated gradient flow admits long time existence with smooth initial data. The proof depends on local $L^2$-derivative estimates, energy estimates and blow-up analysis.

\end{abstract}

\maketitle

\numberwithin{equation}{section}

\section{Introduction}

Let $(M,g)$ be a closed Riemannian manifold of real dimension $n$ and let $(E,h)$ be a vector bundle over $M$ with structure group $G$, where $G$ is a compact Lie group. The Yang-Mills functional, defined on the space of connections of $E$, is given by
\begin{equation*} \label{YMF}
\mathcal{YM}(\nabla)=\frac{1}{2}\int_M|F_{\nabla}|^2d\mathrm{vol}_g,
\end{equation*}
where $\nabla$ is a metric compatiable connection and $F_{\nabla}$ denotes its curvature.

$\nabla$ is called a Yang-Mills connection of $E$ if it satisfies the Yang-Mills equation
\begin{equation*} \label{YME}
D^*_{\nabla}F_{\nabla}=0.
\end{equation*}
A family of connections $\nabla_t$ is said to be a solution to the Yang-Mills flow if
\begin{equation*} \label{YMF}
\frac{\partial \nabla_t}{\partial t}=-D^*_{\nabla_t}F_{\nabla_t}.
\end{equation*}
The Yang-Mills flow was initially studied by Atiyah-Bott \cite{AB} and was suggested to understand the topology of the space of connections by infinite dimensional Morse theory.

In the case that $\nabla_t$ are compatible connections on a holomorphic bundle over a closed K\"{a}hler manifold. Owing to Donaldson \cite{Don85} and Simpson \cite{SIM}, the Yang-Mills flow exists smoothly for all time, and converges to a Hermitian-Yang-Mills connection on stable bundles. This results a correspondence, known as Hitchin-Kobayashi correspondence \cite{Hit,LT}, or Donaldson-Uhlenbeck-Yau Theorem \cite{Don87,UY86}. Natural generalizations to the unstable case have been obtained by Daskalopolos-Wentworth \cite{DW1,DW2}, Wilkin \cite{Wi}, Jacob \cite{Ja}, Sibley \cite{Si},  Li-Zhang-Zhang \cite{LZZ1,LZZ2}, Nie-Zhang \cite{NZ} and so on.

For general Riemannian context, the behavior of Yang-Mills flow is strongly influenced by the dimension of the base manifold. It was proved by Daskalopolos \cite{Dask} over compact Riemann surface, and by R{\aa}de \cite{Ra} in dimensions 2 and 3, that the flow exists for all time and converges. Finite time blow-up phenomena is known to occur in supercritical dimensions ($\dim\geq 5$) \cite{Na}. Work on characterizing the behavior of the flow in supercritcial dimensions has been obtained by Tao-Tian \cite{TT}, and  more recent developments have been made by Petrache-Rivi\'{e}re \cite{PR} in the case of fixed boundary connetions. 4 is the critical dimension, in which singularity formation may occur. Following the analogy with harmonic map heat flow in dimension 2 \cite{Str}, the foundational work of Struwe \cite{Stru} gives a global weak solution for the Yang-Mills flow over closed 4-manifold, without excluding the possibility that point singularities will form in finite time. Later, Schlatter-Struwe-Tahvildar-Zadeh \cite{SST} showed that Yang-Mills flow of $SO(4)$-equivariant connections on an $SU(2)$-bundle over a ball in $\mathbb{R}^4$
admits a smooth solution for all time. This led them to conjecture that long-time existence holds for solutions of Yang-Mills flow in general. Recently, Waldron \cite{Wa} confirmed this conjecture. He proved that finite-time singularities do not occur in 4-dimensional Yang-Mills flow, which is very different from the 2-dimensional harmonic map heat flow \cite{CDY}.

The study of Yang-Mills-Higgs flow has aroused a lot attention in the new century (see \cite{Af,FH,HF,H1,HT1,HT2,LZ0,LS,SW,Tr,Wi2,ZZZ,ZW} and so on). In spite of the work of Waldron \cite{Wa}, it is natural to ask: do the finite-time singularities occur in 4-dimensional Yang-Mills-Higgs flow? We can not solve this question yet. However, we can solve it if the order of derivatives is strictly greater than $1$.

In the following, we will introduce the higher order Yang-Mills-Higgs flow, which will be called as Yang-Mills-Higgs $k$-flow.

For each $k\in \mathbb{N}\cup \{0\}$, the Yang-Mills-Higgs $k$-functional (or Yang-Mills-Higgs $k$-energy) is defined through a connection $\nabla$ and a section $u$ of a vector bundle $E$
\begin{equation} \label{YMHKF}
\mathcal{YMH}_k(\nabla,u)=\frac{1}{2}\int_M\Big[|\nabla^{(k)}F_{\nabla}|^2+|\nabla^{(k+1)}u|^2\Big]d\mathrm{vol}_g.
\end{equation}
When $k=0$, \eqref{YMHKF} is nothing but the Yang-Mills-Higgs functional with vanishing Higgs self-interaction \protect{\cite[page 4]{JT}}.

The Yang-Mills-Higgs $k$-system, i.e. the corresponding Euler-Lagrange equations  of \eqref{YMHKF}, is
\begin{equation} \label{EL}
\begin{cases}
(-1)^kD^*_{\nabla}\Delta_{\nabla}^{(k)}F_{\nabla}+\sum\limits_{v=0}^{2k-1}P_1^{(v)}[F_{\nabla}]+P_2^{(2k-1)}[F_{\nabla}]+\sum\limits_{i=0}^k\nabla^{*(i)}(\nabla^{(k+1)}u\ast \nabla^{(k-i)}u)=0,\\
\nabla^{*(k+1)}\nabla^{(k+1)}u=0,
\end{cases}
\end{equation}
where $\Delta_{\nabla}^{(k)}$ denotes $k$ iterations of the Bochner Laplacian $-\nabla^*\nabla$, and the notation $P$ is defined in \eqref{P}.

A family of pairs $(\nabla_t,u_t)$ is said to be a solution to the Yang-Mills-Higgs $k$-flow if it satisfies the following system
\begin{equation} \label{YMHKS}
\begin{cases}
\frac{\partial \nabla_t}{\partial t}&=(-1)^{(k+1)}D^*_{\nabla_t}\Delta_{\nabla_t}^{(k)}F_{\nabla_t}+\sum\limits_{v=0}^{2k-1}P_1^{(v)}[F_{\nabla_t}]\\
&~~+P_2^{(2k-1)}[F_{\nabla_t}]+\sum\limits_{i=0}^k\nabla_t^{*(i)}(\nabla_t^{(k+1)}u_t\ast \nabla_t^{(k-i)}u_t),\\
\frac{\partial u_t}{\partial t}&=-\nabla^{*(k+1)}_t\nabla_t^{(k+1)}u_t.
\end{cases}
\end{equation}
When $k=0$, the flow \eqref{YMHKS} is a Yang-Mills-Higgs flow \cite{H}.

Now, we state our main result in this paper.

\begin{theorem} \label{mth}
Let $(E,h)$ be a vector bundle over a closed Riemannian $4$-manifold $(M,g)$. Assume the integer $k> 1$, then there exists a unique smooth solution $(\nabla_t,u_t)$ to the Yang-Mills-Higgs $k$-flow \eqref{YMHKS} in $M\times [0,+\infty)$ with smooth initial value $(\nabla_0,u_0)$.
\end{theorem}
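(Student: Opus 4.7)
The plan is to start from the short-time existence theorem established earlier in the paper (via the DeTurck-type gauge fixing) to obtain a unique smooth solution on a maximal time interval $[0, T_{\max})$, and then show $T_{\max} = +\infty$ by ruling out finite-time singularities through a subcritical blow-up argument in dimension $4$ with $k \geq 2$. Uniqueness and smoothness for all time then follow from the short-time theorem applied at arbitrarily late times.

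First I would derive two elementary a priori bounds. Because the flow is the negative gradient flow of $\mathcal{YMH}_k$, energy monotonicity gives $\mathcal{YMH}_k(\nabla_t, u_t) \leq \mathcal{YMH}_k(\nabla_0, u_0)$, yielding uniform $L^2$ bounds on $\nabla^{(k)} F_{\nabla_t}$ and $\nabla^{(k+1)} u_t$. From the Higgs equation $\partial_t u_t = -\nabla_t^{*(k+1)} \nabla_t^{(k+1)} u_t$, integration by parts gives $\tfrac{d}{dt}\|u_t\|_{L^2}^2 = -2\|\nabla_t^{(k+1)} u_t\|_{L^2}^2 \leq 0$, so $\|u_t\|_{L^2}$ is non-increasing; as advertised in the abstract, this $L^2$ control on the Higgs field together with the energy bound is precisely what drives the $4$-dimensional estimates. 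Moreover, since $k+1 \geq 3 > 2 = n/2$, a Sobolev embedding converts $H^{k+1}$ control of $u$ into an $L^\infty$ bound, compensating for the absence of a maximum principle for the higher-order operator $\nabla^{*(k+1)}\nabla^{(k+1)}$.

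The core of the argument is then a local parabolic smoothing and $\varepsilon$-regularity package. Using Bando-Shi-type iterated $L^2$-derivative estimates, coupled with Uhlenbeck's local gauge fixing to pass from $L^2$ curvature control to $C^0$ control of the connection matrix, one shows that whenever the scale-invariant local energy on a parabolic cylinder $Q_r(x,t) = B_r(x) \times (t - r^{2(k+1)}, t]$ is smaller than a universal threshold $\varepsilon_0$, all derivatives of $F_{\nabla}$ and $u$ are bounded on a smaller cylinder in terms of the initial data. Under the natural parabolic rescaling $(x,t) \mapsto (\lambda x, \lambda^{2(k+1)} t)$, the local curvature and Higgs energies pick up factors $\lambda^{2(k+2) - n}$ and $\lambda^{2(k+1) - n}$ respectively; for $n = 4$ and $k > 1$ both exponents are strictly positive, so the rescaled local energies on unit cylinders vanish as $\lambda \to 0$.

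To conclude, suppose for contradiction that $T_{\max} < \infty$. A standard concentration argument produces a point $x_\ast \in M$ together with sequences $(x_i, t_i) \to (x_\ast, T_{\max})$ and scales $\lambda_i \searrow 0$ along which the local energy fails to drop below $\varepsilon_0$. Rescaling $(\nabla_{t_i}, u_{t_i})$ by $\lambda_i$ and passing to a subsequence after Uhlenbeck gauge fixing, subcritical scaling forces the blow-up limit to have vanishing energy, contradicting the concentration hypothesis. Hence $T_{\max} = +\infty$. The most delicate step, I expect, will be the local smoothing/$\varepsilon$-regularity estimate: one has to commute $\partial_t$ with iterated Bochner Laplacians and track the nonlinear remainders $P_1^{(v)}[F_{\nabla}]$, $P_2^{(2k-1)}[F_{\nabla}]$ and $\nabla^{*(i)}(\nabla^{(k+1)} u \ast \nabla^{(k-i)} u)$ through repeated integration by parts and Kato-type inequalities, with every pointwise statement about $u$ replaced by a Sobolev estimate, making the bookkeeping substantially heavier than in the classical $k=0$ case.
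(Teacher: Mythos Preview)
Your broad outline---short-time existence, monotonicity of $\mathcal{YMH}_k$, the $L^2$ bound on $u_t$, and a blow-up contradiction exploiting subcriticality when $n=4$ and $k\geq 2$---is the architecture the paper uses, and your observation that $W^{k+1,2}\hookrightarrow C^0$ already gives a uniform $L^\infty$ bound on $u_t$ is correct (the author records this in Remark~7.2). But the mechanism you propose for the contradiction differs from the paper's and, as written, does not close. The paper never proves an $\varepsilon$-regularity theorem of the form ``small scale-invariant local energy on a parabolic cylinder implies smoothness''; its Bernstein--Bando--Shi estimates (Proposition~\ref{bbs} and Corollaries~\ref{ce1}--\ref{ce2}) are derived \emph{under the assumption} that $\sup|F_{\nabla_t}|$ and $\sup|u_t|$ are finite, and from this one concludes that the only obstruction to extension is $\sup(|F|+|u|)=\infty$ (Theorem~\ref{obst2}). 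One then builds a blow-up limit on $\mathbb{R}^n\times\mathbb{R}_{<0}$ with $|F^\infty_0(0)|+|u^\infty_0(0)|^2=1$ (Theorem~\ref{bll}). Your sentence ``subcritical scaling forces the blow-up limit to have vanishing energy'' is problematic: even granting that $\|\nabla^{(k)}F^\infty\|_{L^2}=0$, this does \emph{not} force $F^\infty\equiv 0$, so it does not contradict $|F^\infty_0(0)|\neq 0$. The paper instead shows $\|F^\infty\|_{L^p}=0$ for a suitable $p>2$ (Proposition~\ref{lptli}), which does kill $F^\infty$ pointwise.

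To run that $L^p$ argument one needs a uniform bound on $\|F_{\nabla_t}\|_{L^p}$, and here the paper inserts an ingredient missing from your sketch. The Sobolev embedding $W^{k,2}\hookrightarrow L^{p}$ (with $p>2$, which is precisely where $k>1$ enters in dimension~$4$) requires control of the full $W^{k,2}$ norm of $F_{\nabla_t}$; the $k$-energy supplies only the top piece $\|\nabla^{(k)}F_{\nabla_t}\|_{L^2}$, and nothing in your list bounds $\|F_{\nabla_t}\|_{L^2}$. The paper proves separately (Proposition~\ref{ymhe}) that the \emph{zeroth-order} Yang--Mills--Higgs energy $\tfrac12\int_M(|F_{\nabla_t}|^2+|\nabla_t u_t|^2)$ stays bounded along the $k$-flow on $M^4$ for finite time: one differentiates it, bounds the right-hand side by $-\partial_t\mathcal{YMH}_k$ plus terms handled via interpolation (Lemma~\ref{ii}) and the $4$-dimensional Sobolev inequality $\|v\|_{L^4}\leq C\|v\|_{W^{1,2}}$, using only the $L^2$ bound on $u_t$. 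This step is where the restriction to $n=4$ genuinely bites, and without it neither the Sobolev embedding for $F_{\nabla_t}$ nor the subsequent blow-up contradiction has input on the low-derivative side.
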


\begin{remark}
To prove the long-time existence of the Yang-Mills flow ($k=0$), coupled with an extra structure (Higgs field \cite{H1} or spinor filed \cite{H2}), one powerful tool is the maximal principle. One can obtain a $C^0$ bound of the Higgs field (or spinor field) immediately. This brings us a lot of convenience in the analysis. When $k>0$, the order of $\nabla_t^{*(k+1)}\nabla_t^{(k+1)}$ is more than 2, and the maximal principle fails. We will show that the $L^2$-bound of the Higgs field is enough for energy estimate in $4$ dimension.
\end{remark}

It is not surprising to consider such higher order flow. Just recently, in \cite{Ke}, Kelleher studied higher order Yang-Mills flow (with vanishing Higgs field) and in \cite{Sa}, Saratchandran studied higher order Seiberg-Witten flow (flow of connections coupled with spinor fields). The study of higher order flow also has a long history. In De Giorgi's program to approximate singular geometric flows with sequences of smooth ones, he \cite{DG} conjectured that any compact hypersurfaces in Euclidean space, evolving by the gradient flow of certain functionals with sufficiently high derivatives does not hit singularities. Similar to ones proposed by De Giorgi, Mantegazza \cite{Ma} studied higher order generalisations of the mean curvature flow and proved that the flow do not hit singularities provided the order of the derivatives is sufficiently large. Just very recently, Jia-Wang \cite{JW} extended some results due to Mantegazza to a more general ambient manifold. Actually, there have been many other important works on higher order flow, such as Escher-Mayer-Simonett \cite{EMS} and Wheeler \cite{Wh} for surface diffusion flow, Kuwert-Sch\"{a}tzle \cite{KS0,KS}  and Simonett \cite{Simo} for Willmore flow of surfaces, Streets \cite{Stre} for a certain flow of Riemannian curatures, Bahuaud-Helliwell \cite{BH} and Kotschwar \cite{Ko} for a certain flow of Riemannian metrics, Novaga-Okabe \cite{NO} for steepest descent flow and so on.

Now we will outline the structure of this paper. In Section \ref{pre}, we will give some basic notations. In Section \ref{tho}, we will derive the Euler-Lagrange equations for the Yang-Mills-Higgs $k$-functional and prove the local existence of the flow. In Section \ref{se}, we will obtain $L^2$ derivative estimates of Bernstein-Bando-Shi type and use these to derive a basic obstruction to long time existence. In Section \ref{ba}, we will address the blow-up analysis, which can be used to derive an $L^{\infty}$ bound from $L^p$-bound. In Section \ref{ee}, we will prove that both the Yang-Mills-Higgs energy and the Yang-Mills-Higgs $k$-energy are bounded along the flow in 4 dimension. In Section \ref{pot}, we complete the proof of Theorem \ref{mth}. In Section \ref{cp}, we will show that the long time existence of Yang-Mills-Higgs 1-flow in dimension $4$ is obstructed by the possibility of concentration of the curvature in smaller and smaller balls. In Section \ref{hsi}, we show that provided $\dim(M)<2(k+1)$, the associated negative gradient flow of the Yang-Mills-Higgs $k$-functional with Higgs self-interaction admits long time existence with smooth initial data.

\vskip 1cm

\section{Preliminary} \label{pre}

To meet the requirements in the next sections, here, in this short section the setup and notation are briefly presented. We will use some of Kelleher's notations in \cite{Ke} and Saratchandran's in \cite{Sa}. For a more concentrate elements about Yang-Mills theory, we refer to Donaldson-Kronheimer \cite{DK} and Jost's \cite{Jost} books and references therein.

Let $(E,h)$ be a vector bundle over a smooth closed manifold $(M,g)$ of real dimension $n$. The set of all smooth unitary connections on $E$ will be denoted by $\mathcal{A}_E$. For a given connection $\nabla\in \mathcal{A}_E$, it can be extended to other tensor bundles by coupling with the corresponding Levi-Civita connection $\nabla_M$ on $(M,g)$.

Let $D_{\nabla}$ be the exterior derivative, or skew symmetrization of $\nabla$. The curvature tensor of $E$ is denoted by
 $$F_{\nabla}=D_{\nabla}\circ D_{\nabla}.$$

We set $\nabla^{*},D_{\nabla}^{*}$ to be the formal $L^2$-adjoint of $\nabla,D_{\nabla}$, respectively. The Bochner and Hodge Laplacians are given respectively by
 $$\Delta_{\nabla}=-\nabla^{*}\nabla, \quad \Delta_{D_{\nabla}}=D_{\nabla}D^*_{\nabla}+D^*_{\nabla}D_{\nabla}.$$

Let $\xi,\eta$ be $p$-forms valued in $E$ or $\mathrm{End}(E)$. Let $\xi\ast \eta$ denote any mulitilinear form obtained from a tensor product $\xi\otimes\eta$ in s universal bilinear way. That is to say, $\xi\ast \eta$ is obtained by starting with $\xi\otimes\eta$, taking any linear combination of this tensor, taking any number of metric contractions w.r.t $g$ or $h$, and switching any number of factors in the product. We then have $$|\xi\ast\eta|\leq C|\xi||\eta|.$$

Denote by
$$\nabla^{(i)}=\underbrace{\nabla\cdots\nabla}_{i\ \mathrm{times}}.$$

We will also use the $P$ notation, as introduced in \cite{KS}. Given a tensor $\xi$, we denote by
\begin{equation} \label{P}
P_v^{(k)}[\xi]:=\sum_{w_1+\cdots+w_v=k}(\nabla^{(w_1)}\xi)\ast\cdots\ast(\nabla^{(w_v)}\xi)\ast T,
\end{equation}
where $k,v\in \mathbb{N}$  and $T$ is a generic background tensor dependent only on $g$.

We will collect some lemmas appearing in \cite{Ke,Sa}. During the study of the higher order flow, there will be times when we need to switch derivatives, leading to need the following lemmas.

\subsection{Commutation formulas for connections}

\begin{lemma} [Weitzenb\"{o}ck forumula] \label{wf}
Let $(E,h)$ be a Hermitian vector bundle over a Riemannian manifold $(M,g)$, with compatiable metric connection $\nabla$. Let $\Delta_{D_{\nabla}}=D_{\nabla}D^*_{\nabla}+D^*_{\nabla}D_{\nabla}$ denote the Hodge Laplacian, and $\Delta_{\nabla}=-\nabla^*\nabla$ denote the Bochner Laplacian. For $\phi\in \Omega^p(M;E)$, we have
$$\Delta_{D_{\nabla}}\phi=-\Delta_{\nabla}\phi+(Rm+F_{\nabla})\ast \phi,$$
where $Rm$ denotes Riemannian curvature of $g$.
\end{lemma}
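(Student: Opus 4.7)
The plan is to compute both Laplacians in a local orthonormal frame and identify the difference. Fix a point $x \in M$ and a local orthonormal frame $\{e_i\}_{i=1}^n$ satisfying $\nabla_M e_i|_x = 0$, with dual coframe $\{e^i\}$. At $x$ we have the standard formulas
\[ D_\nabla \phi = \sum_i e^i \wedge \nabla_{e_i}\phi, \qquad D_\nabla^* \phi = -\sum_i \iota_{e_i}\nabla_{e_i}\phi, \]
where $\iota$ denotes interior product. Substituting these into $\Delta_{D_\nabla} = D_\nabla D_\nabla^* + D_\nabla^* D_\nabla$ gives
\[ \Delta_{D_\nabla}\phi = -\sum_{i,j}\Bigl(\iota_{e_i}\bigl(e^j \wedge \nabla_{e_i}\nabla_{e_j}\phi\bigr) + e^i \wedge \iota_{e_j}\nabla_{e_i}\nabla_{e_j}\phi\Bigr) \]
at $x$, modulo terms coming from derivatives of the frame that vanish at $x$ by the choice of normal frame.

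Next I would apply the graded Leibniz rule $\iota_X(\alpha \wedge \beta) = \iota_X \alpha \wedge \beta + (-1)^{\deg \alpha} \alpha \wedge \iota_X \beta$ to the first summand, separating the diagonal contribution (where the interior product lands on $e^j$ and forces $i=j$) from the off-diagonal contribution where it passes through to the form factor of $\phi$. The diagonal piece produces $-\sum_i \nabla_{e_i}\nabla_{e_i}\phi$, which is precisely $\nabla^*\nabla \phi = -\Delta_\nabla \phi$ at $x$. The off-diagonal piece, after reindexing, combines with the second summand into expressions built from the antisymmetric combination $\nabla_{e_i}\nabla_{e_j}\phi - \nabla_{e_j}\nabla_{e_i}\phi$ contracted with wedge and interior products in the form slot.

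By the definition of the coupled connection on $\Lambda^p T^*M \otimes E$, this antisymmetric combination equals the action of the curvature operator $R^{\Lambda^p T^*M}(e_i,e_j) + F_\nabla(e_i, e_j)$, where the first summand is a universal contraction of $Rm$ against the form part of $\phi$ and the second is the curvature $F_\nabla$ acting on the bundle part. Every resulting term is a pointwise universal bilinear pairing of $Rm$ or $F_\nabla$ against $\phi$, with contractions carried out via $g$ and $h$, so by the convention introduced in the preliminary each term lies in $(Rm + F_\nabla) \ast \phi$. Collecting everything yields the claimed identity.

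The main difficulty is purely bookkeeping rather than conceptual: one must verify that the non-curvature second-derivative terms in the two summands consolidate exactly into $-\nabla^*\nabla \phi$ without leaving residual first-order pieces, and that every curvature term that appears is genuinely a universal bilinear contraction so it may be absorbed by the schematic $\ast$. Because the $\ast$ convention subsumes all metric contractions and sign choices, no exact constants need to be tracked, which is what makes this Weitzenböck statement a convenient input for the higher order estimates in the sections that follow.
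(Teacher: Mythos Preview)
Your argument is correct and is the standard local-frame derivation of the Weitzenb\"ock formula: expand $D_\nabla$ and $D_\nabla^*$ via wedge and interior product, extract the diagonal second-order term as $\nabla^*\nabla$, and absorb the antisymmetrized remainder into $(Rm+F_\nabla)\ast\phi$ using the $\ast$ convention. The only point worth tightening is the claim that no first-order residue survives: this is justified precisely because at the chosen point $\nabla_M e_i=0$ forces $[e_i,e_j]=0$ (by torsion-freeness), so the commutator $\nabla_{e_i}\nabla_{e_j}-\nabla_{e_j}\nabla_{e_i}$ equals the full curvature operator with no Lie bracket correction.

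As for the comparison: the paper does not supply its own proof of this lemma. It is listed in the preliminaries among results ``collected'' from \cite{Ke,Sa}, and is treated as a standard input rather than something to be established here. Your write-up therefore fills in what the paper deliberately left as a citation.
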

\begin{lemma} \label{a}
Let $(E,h)$ be a Hermitian vector bundle over a Riemannian manifold $(M,g)$, with compatiable metric connection $\nabla$. Let $\phi$ be a section of $E$, we have
\begin{equation*}
\begin{split}
\nabla_{i_k}\nabla_{i_{k-1}}\cdots \nabla_{i_1}\nabla_{j_1}\nabla_{j_2}\cdots\nabla_{j_k}\phi
&=\nabla_{i_k}\nabla_{j_k}\cdots \nabla_{i_1}\nabla_{j_1}\phi\\
&\quad +\sum_{l=0}^{2k-2}\Big[(\nabla_M^{(l)}Rm+\nabla^{(l)}F_{\nabla})\ast \nabla^{(2k-2-l)}\phi\Big].
\end{split}
\end{equation*}
\end{lemma}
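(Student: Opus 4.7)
The identity is proved by induction on $k$. The base case $k=1$ is immediate: both sides coincide and the correction sum contributes nothing new, being absorbed into the main term. For the inductive step, assume the identity holds at order $k-1$; I then proceed in two stages.

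\emph{Stage 1.} I commute the innermost derivative $\nabla_{j_k}$ on the left-hand side past the $2k-2$ derivatives $\nabla_{j_{k-1}},\ldots,\nabla_{j_1},\nabla_{i_1},\ldots,\nabla_{i_{k-1}}$ standing to its left, one adjacent transposition at a time, using the basic commutator identity
\[
[\nabla_a, \nabla_b] T = (Rm + F_{\nabla}) \ast T,
\]
valid for any tensor $T$ built from $E$ and $TM$ (it is just the curvature of the coupled connection, essentially the $k=1$ content of Lemma~\ref{wf}). After this stage the main term has $\nabla_{j_k}$ sitting right next to $\nabla_{i_k}$, while each commutator has spawned an extra term with one curvature factor inserted among the outer derivatives. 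Distributing the remaining outer $\nabla$'s via the Leibniz rule converts each such insertion into terms of the form $(\nabla_M^{(l)}Rm + \nabla^{(l)}F_\nabla)\ast \nabla^{(2k-2-l)}\phi$ with $0\le l \le 2k-2$, matching the claimed $P$-notation template.

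\emph{Stage 2.} What remains on the principal term is the order-$(k-1)$ block $\nabla_{i_{k-1}}\cdots \nabla_{i_1}\nabla_{j_1}\cdots \nabla_{j_{k-1}}\phi$, to which the induction hypothesis applies, reordering it into $\nabla_{i_{k-1}}\nabla_{j_{k-1}}\cdots \nabla_{i_1}\nabla_{j_1}\phi$ plus corrections of the stated form with upper index $2k-4$. Applying the outer pair $\nabla_{i_k}\nabla_{j_k}$ to the whole expression and invoking the Leibniz rule once more keeps every correction in the template $(\nabla_M^{(l)}Rm + \nabla^{(l)}F_\nabla)\ast \nabla^{(2k-2-l)}\phi$, with $l$ shifted by at most two and still lying in $\{0,\ldots,2k-2\}$.

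The substance of the argument is pure bookkeeping, most cleanly tracked by a single conserved quantity: in every correction term produced at any point in the induction, the total \emph{weight} (order of the curvature derivative plus order of the $\phi$-derivative) is exactly $2k-2$. Each elementary commutator consumes one $\phi$-derivative and emits a curvature factor of order zero; each subsequent outer $\nabla$ distributes by Leibniz, raising exactly one of the two factors' orders by one while leaving the other unchanged. The main obstacle I expect is precisely verifying this weight invariance through both stages and checking that both an $Rm$ contribution and an $F_\nabla$ contribution are accounted for at every commutation (since the tensors being differentiated are simultaneously valued in $TM$ and in $E$); once that is in hand the packaging into $\sum_{l=0}^{2k-2}(\nabla_M^{(l)}Rm + \nabla^{(l)}F_\nabla)\ast \nabla^{(2k-2-l)}\phi$ is automatic.
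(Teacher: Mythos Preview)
Your induction argument is correct and is the standard way this lemma is established: the paper itself does not prove Lemma~\ref{a} but simply quotes it from \cite{Ke,Sa}, where the same commutator-plus-Leibniz bookkeeping is carried out. One small clarification worth making explicit in Stage~1 is that at the very first swap $[\nabla_{j_{k-1}},\nabla_{j_k}]\phi$ only produces an $F_\nabla$ contribution (since $\phi$ is a pure $E$-section with no $TM$ indices), while the $Rm$ contributions enter only from the second swap onward when the commutator acts on $\nabla^{(m)}\phi$ with $m\ge 1$; this is harmless in the $\ast$-notation but is the precise mechanism behind your remark that ``both an $Rm$ contribution and an $F_\nabla$ contribution are accounted for at every commutation.''
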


\begin{lemma} \label{a0}
Let $(E,h)$ be a Hermitian vector bundle over a Riemannian manifold $(M,g)$, with compatiable metric connection $\nabla$. Let $\phi$ be a section of $E$, we have
$$\nabla^{(n)}\Delta^{(k)}_{\nabla}\phi=\Delta_{\nabla}^{(k)}\nabla^{(n)}\phi+\sum_{j=0}^{2k+n-2}
\Big[(\nabla^{(j)}_MRm+\nabla^{(j)}F_{\nabla})\ast \nabla^{(2k+n-j-2)}\phi\Big].$$
\end{lemma}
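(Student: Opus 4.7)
The plan is to prove this identity by induction on $k$, reducing everything to a single elementary input: the pointwise commutator $[\nabla_a,\nabla_b]\eta = (Rm+F_\nabla)\ast\eta$ for any $E$-valued tensor $\eta$, combined with the Leibniz rule. Since the $\ast$- and $P$-notations absorb all contractions, permutations, and linear combinations, I only need to track the total number of covariant derivatives distributed across each factor in every error term.

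The outer induction begins at $k=0$, where both sides equal $\nabla^{(n)}\phi$ and the sum is empty --- trivial. For $k=1$, I would first establish, by a sub-induction on $n$,
$$\nabla^{(n)}\Delta_\nabla\phi = \Delta_\nabla\nabla^{(n)}\phi + \sum_{j=0}^{n}\Big[(\nabla_M^{(j)}Rm+\nabla^{(j)}F_\nabla)\ast\nabla^{(n-j)}\phi\Big].$$
The case $n=1$ expands in normal coordinates as
$$\nabla_c\Delta_\nabla\phi - \Delta_\nabla\nabla_c\phi = g^{ab}\big([\nabla_c,\nabla_a]\nabla_b\phi + \nabla_a[\nabla_c,\nabla_b]\phi\big);$$
each commutator contributes a $(Rm+F_\nabla)\ast\nabla\phi$ factor, and the Leibniz rule applied to the outer $\nabla_a$ in the second term produces the extra $(\nabla_M Rm+\nabla F_\nabla)\ast\phi$ summand. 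The step $n\to n+1$ is then one more application of $\nabla$ to both sides, redistributed via Leibniz between curvature factors and $\phi$ --- the total derivative count on each schematic piece remains $n+1$, matching the claimed range $0\le j\le n$.

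For the inductive step $k\to k+1$, I would decompose
$$\nabla^{(n)}\Delta_\nabla^{(k+1)}\phi = \nabla^{(n)}\Delta_\nabla\big(\Delta_\nabla^{(k)}\phi\big),$$
apply the $k=1$ case (with $\phi$ replaced by $\Delta_\nabla^{(k)}\phi$) to commute $\nabla^{(n)}$ past the outer $\Delta_\nabla$, and then apply the inductive hypothesis on $k$ to each $\nabla^{(n-j)}\Delta_\nabla^{(k)}\phi$ --- including the leading term $\nabla^{(n)}\Delta_\nabla^{(k)}\phi$. All resulting schematic terms have total derivative count $2(k+1)+n-2$, as required.

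The main (and essentially only) obstacle is combinatorial bookkeeping: one must verify at each application of the commutator or the Leibniz rule that every new term still fits the schematic form $(\nabla_M^{(j)}Rm+\nabla^{(j)}F_\nabla)\ast\nabla^{(2k+n-j-2)}\phi$ with $0\le j\le 2k+n-2$. Because constants, orderings, and contractions are absorbed by $\ast$, this reduces to tracking the two derivative counts $j$ and $2k+n-j-2$, whose sum is manifestly preserved at every step; no delicate analytic estimate is required.
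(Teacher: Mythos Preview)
The paper does not supply its own proof of this lemma; it is quoted (along with Lemmas~\ref{wf}, \ref{a}, \ref{a1}) from \cite{Ke,Sa} in the preliminaries section. Your inductive scheme is the standard one and is essentially what those references do, so the overall strategy is fine.

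There is, however, one genuine bookkeeping slip in your inductive step. After applying the $k=1$ case to $\Delta_\nabla^{(k)}\phi$ you obtain
\[
\nabla^{(n)}\Delta_\nabla^{(k+1)}\phi
=\Delta_\nabla\!\big(\nabla^{(n)}\Delta_\nabla^{(k)}\phi\big)
+\sum_{j=0}^{n}\big(\nabla_M^{(j)}Rm+\nabla^{(j)}F_\nabla\big)\ast\nabla^{(n-j)}\Delta_\nabla^{(k)}\phi.
\]
You then say you will apply the inductive hypothesis to \emph{each} factor $\nabla^{(n-j)}\Delta_\nabla^{(k)}\phi$. For the leading term ($j=0$, sitting inside the outer $\Delta_\nabla$) that is exactly right and produces $\Delta_\nabla^{(k+1)}\nabla^{(n)}\phi$ plus single--curvature corrections. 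But for the error terms with $j\ge 1$ this step would generate expressions of the shape
\[
\big(\nabla^{(j)}\!\operatorname{Curv}\big)\ast\big(\nabla^{(j')}\!\operatorname{Curv}\big)\ast\nabla^{(\cdots)}\phi,
\]
i.e.\ \emph{quadratic} in curvature, which do not fit the bilinear schematic form asserted in the lemma (the $\ast$-notation absorbs only metric contractions and permutations, not extra curvature factors). The easy repair is not to invoke the inductive hypothesis on those error terms at all: since $\nabla g=0$, the operator $\Delta_\nabla^{(k)}$ is literally a metric contraction of $\nabla^{(2k)}$, so $\nabla^{(n-j)}\Delta_\nabla^{(k)}\phi$ is already a contraction of $\nabla^{(2k+n-j)}\phi$, and the error term is directly of the form $(\nabla^{(j)}\!\operatorname{Curv})\ast\nabla^{(2k+n-j)}\phi$ with the correct total count $2(k+1)+n-2$. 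With that adjustment your argument goes through. (Your $k=0$ base case is also slightly misstated --- the sum $\sum_{j=0}^{n-2}$ is not empty for $n\ge 2$ --- but since the $\ast$-notation allows zero coefficients this is harmless.)
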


\begin{lemma} \label{a1}
Let $(E,h)$ be a Hermitian vector bundle over a Riemannian manifold $(M,g)$, with compatiable metric connection $\nabla$. Let $\xi$ and $\zeta$ are sections of $E$, then for $k\in \mathbb{N}$ we have
\begin{equation*}
\begin{split}
\int_M\langle \nabla^{(k)}\xi,\nabla^{(k)}\zeta\rangle d\mathrm{vol}_g
&=\int_M(-1)^k\langle \xi,\Delta_{\nabla}^{(k)}\zeta\rangle d\mathrm{vol}_g\\
&\quad +\int_M\langle \xi,\sum_{v=0}^{2k-2}\Big((\nabla_M^{(v)}Rm+\nabla^{(v)}F_{\nabla})\ast \nabla^{(2k-2-v)}\zeta\Big) d\mathrm{vol}_g.
\end{split}
\end{equation*}
\end{lemma}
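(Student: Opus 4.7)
The plan is to proceed by induction on $k$, using integration by parts together with Lemma \ref{a0} to commute iterated Bochner Laplacians past covariant derivatives.

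For the base case $k=1$, a single integration by parts will give
\[
\int_M\langle\nabla\xi,\nabla\zeta\rangle\,d\mathrm{vol}_g=\int_M\langle\xi,\nabla^*\nabla\zeta\rangle\,d\mathrm{vol}_g=-\int_M\langle\xi,\Delta_\nabla\zeta\rangle\,d\mathrm{vol}_g,
\]
with the curvature sum $\sum_{v=0}^{0}$ appearing with zero coefficient. For the inductive step I will assume the identity for all exponents smaller than $k$. First, one integration by parts yields
\[
\int_M\langle\nabla^{(k)}\xi,\nabla^{(k)}\zeta\rangle\,d\mathrm{vol}_g=-\int_M\langle\nabla^{(k-1)}\xi,\Delta_\nabla\nabla^{(k-1)}\zeta\rangle\,d\mathrm{vol}_g.
\]
Next, I will invoke Lemma \ref{a0} (with $n=k-1$ and Laplacian exponent $1$) to write
\[
\Delta_\nabla\nabla^{(k-1)}\zeta=\nabla^{(k-1)}\Delta_\nabla\zeta+\sum_{j=0}^{k-1}\bigl[(\nabla_M^{(j)}Rm+\nabla^{(j)}F_\nabla)\ast\nabla^{(k-1-j)}\zeta\bigr].
\]

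The leading piece $-\int_M\langle\nabla^{(k-1)}\xi,\nabla^{(k-1)}\Delta_\nabla\zeta\rangle\,d\mathrm{vol}_g$ is then processed by the inductive hypothesis applied to the pair $(\xi,\Delta_\nabla\zeta)$, producing the desired principal term $(-1)^k\int_M\langle\xi,\Delta_\nabla^{(k)}\zeta\rangle\,d\mathrm{vol}_g$ together with curvature corrections of the shape $\mathrm{curv}^{(v)}\ast\nabla^{(2k-4-v)}\Delta_\nabla\zeta$ for $0\le v\le 2k-4$; since the extra $\Delta_\nabla$ on $\zeta$ supplies two further derivatives, the total weight on the $(\mathrm{curv},\zeta)$ pair matches the required $2k-2$. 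For the remaining error terms pairing $\nabla^{(k-1)}\xi$ with $\mathrm{curv}^{(j)}\ast\nabla^{(k-1-j)}\zeta$, I will integrate by parts $k-1$ further times and distribute the derivatives across the curvature and $\zeta$ factors via the Leibniz rule. The result is a finite sum of terms of the stated shape $(\nabla_M^{(v)}Rm+\nabla^{(v)}F_\nabla)\ast\nabla^{(2k-2-v)}\zeta$ with $v\in\{0,1,\dots,2k-2\}$.

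The main obstacle will be the bookkeeping in the inductive step. Each commutation of a covariant derivative past a curvature tensor spawns an additional curvature factor at the cost of two derivatives, so iterating the Leibniz rule can produce terms with three or more tensor factors. Strictly, these do not match the two-factor shape written in the statement; I expect to rely on the flexibility of the $\ast$ notation, reading $\mathrm{curv}^{(v)}$ as any universal polynomial expression in $\nabla_M^{(\le v)}Rm$ and $\nabla^{(\le v)}F_\nabla$ of total weight $v$. Under this convention every manipulation preserves the overall derivative weight on the $(\mathrm{curv},\zeta)$ pair, and the claimed structure of the identity will follow.
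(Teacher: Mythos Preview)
The paper does not supply its own proof of this lemma; it is listed among the preliminary commutation identities ``collect[ed] \dots\ from \cite{Ke,Sa}''. So there is no in-paper argument to compare against, only the statement.

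Your inductive scheme is sound and is essentially the standard way such identities are proved in the cited references. The base case and the inductive step (one integration by parts, then Lemma~\ref{a0} to commute $\Delta_\nabla$ past $\nabla^{(k-1)}$, then the hypothesis on the pair $(\xi,\Delta_\nabla\zeta)$) are correct, and the derivative counting you do shows all error terms land in the range $0\le v\le 2k-2$. One remark: your ``main obstacle'' paragraph overstates the difficulty. When you move the remaining $k-1$ derivatives from $\nabla^{(k-1)}\xi$ onto $\mathrm{curv}^{(j)}\ast\nabla^{(k-1-j)}\zeta$ by integration by parts, the Leibniz rule for $\nabla$ on a two-factor product produces only two-factor terms; no additional curvature factors are spawned at that stage. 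Likewise, $\nabla^{(m)}\Delta_\nabla\zeta$ is already a metric contraction of $\nabla^{(m+2)}\zeta$ (since $\nabla g=0$), so no commutation is needed to absorb it into the $\ast$-notation. Thus the bookkeeping is in fact cleaner than you feared, and the broadened reading of $\ast$ you propose, while harmless, is not actually required here.

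A slightly more direct variant, closer in spirit to how these formulas are derived in \cite{Ke,Sa}, is to integrate by parts $k$ times in one go to obtain $\int_M\langle\xi,\nabla^{*(k)}\nabla^{(k)}\zeta\rangle$, and then invoke Lemma~\ref{a} once to reorder $\nabla^{*(k)}\nabla^{(k)}$ into $(-1)^k\Delta_\nabla^{(k)}$ plus the curvature remainder. This avoids the induction entirely but is logically equivalent to what you wrote.
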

The following interpolation results will be used in Section 4, when proving local derivative estimates.

\subsection{Interpolation inequalities}

\begin{lemma}   (\protect{\cite[Lemma 5.3]{Ke}}, analogue of \protect{\cite[Corollary 5.5]{KS}}) \label{i}
Let $(E,h)$ be a Hermitian vector bundle over a Riemannian manifold $(M,g)$, with connection $\nabla$. Let $\phi$ be a section of $E$, and $\gamma$ a bump function on $M$. For $k\in \mathbb{N}$, if $1\leq i_1,\cdots,i_r\leq k$, $i_1+i_2\cdots+ i_r=2k$ and $s\geq 2k$, we have
\begin{equation*}
\begin{split}
&\int_M   \gamma^s \nabla^{(i_1)}\phi\ast \cdots \ast \nabla^{(i_r)}\phi d\mathrm{vol}_g\\
&\quad \leq C(\dim (M),\mathrm{rk}(E),k,r,s,g,h,\gamma)\|\phi\|^{r-2}_{\infty}\Big(\int_M|\nabla^{(k)}\phi|^2\gamma^sd\mathrm{vol}_g+\|\phi\|^2_{L^2,\gamma>0}\Big),
\end{split}
\end{equation*}
where the subscript $\gamma>0$ means $\{x\in M| \gamma(x)>0\}$.
\end{lemma}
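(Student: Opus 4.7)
The plan is to reduce the multi-factor estimate to a single-factor Gagliardo--Nirenberg interpolation and then assemble the pieces via H\"older's inequality. Set $p_j := 2k/i_j$. The hypotheses $1 \leq i_j \leq k$ and $\sum_j i_j = 2k$ imply $p_j \geq 2$ and $\sum_j 1/p_j = 1$, so $(p_j)$ is a valid H\"older tuple. Using the pointwise bound $|\xi_1 \ast \cdots \ast \xi_r| \leq C \prod_j |\xi_j|$ together with the factorization $\gamma^s = \prod_j \gamma^{s i_j/(2k)}$, H\"older's inequality yields
\begin{equation*}
\int_M \gamma^s \prod_{j=1}^r |\nabla^{(i_j)}\phi|\, d\mathrm{vol}_g \leq C \prod_{j=1}^r \Bigl(\int_M \gamma^s |\nabla^{(i_j)}\phi|^{2k/i_j}\, d\mathrm{vol}_g\Bigr)^{i_j/(2k)}.
\end{equation*}

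The core claim to establish is the single-factor bound: for every $1 \leq i \leq k$ and $s \geq 2k$,
\begin{equation*}
\int_M \gamma^s |\nabla^{(i)}\phi|^{2k/i}\, d\mathrm{vol}_g \leq C \|\phi\|_\infty^{2k/i - 2}\Bigl(\int_M \gamma^s |\nabla^{(k)}\phi|^2\, d\mathrm{vol}_g + \|\phi\|^2_{L^2,\gamma>0}\Bigr).
\end{equation*}
I would prove this by descending induction on $i$; the base case $i=k$ is tautological. For the inductive step with $i < k$, write the integrand as $|\nabla^{(i)}\phi|^{2k/i-2}\langle \nabla \nabla^{(i-1)}\phi, \nabla^{(i)}\phi\rangle$ and integrate the outer $\nabla$ by parts against $\gamma^s |\nabla^{(i)}\phi|^{2k/i-2}\nabla^{(i)}\phi$. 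This produces three types of terms: a main term containing $\nabla^{(i+1)}\phi$ (after Lemma \ref{a} is used to rewrite $\nabla^* \nabla^{(i)}$ modulo curvature commutators that are of strictly lower order in derivatives of $\phi$), chain-rule corrections bounded pointwise by $|\nabla^{(i+1)}\phi|\,|\nabla^{(i)}\phi|^{2k/i-3}\,|\nabla^{(i-1)}\phi|$, and bump-function corrections carrying a factor $|\nabla \gamma|\gamma^{s-1}$, which are admissible because $s \geq 2k$ always leaves a non-negative remaining power of $\gamma$. Cauchy--Schwarz and Young's inequality then produce the recursive estimate
\begin{equation*}
\int_M \gamma^s |\nabla^{(i)}\phi|^{2k/i}\, d\mathrm{vol}_g \leq \epsilon \int_M \gamma^s |\nabla^{(i+1)}\phi|^{2k/(i+1)}\, d\mathrm{vol}_g + C_\epsilon \|\phi\|_\infty^{2k/i-2}\Bigl(\int_M \gamma^s |\nabla^{(k)}\phi|^2\, d\mathrm{vol}_g + \|\phi\|^2_{L^2,\gamma>0}\Bigr),
\end{equation*}
and the first term on the right is eliminated by the inductive hypothesis at level $i+1$.

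Assembling, substituting the single-factor bound into each H\"older factor yields an exponent of $\|\phi\|_\infty$ equal to $\sum_j (i_j/(2k))(2k/i_j - 2) = r - 2$ and an exponent of the bracketed energy equal to $\sum_j i_j/(2k) = 1$, which is exactly the claim. The main obstacle is the bookkeeping associated with the curvature-commutator corrections coming from Lemma \ref{a}: each such correction has strictly fewer derivatives on $\phi$, so it must be cycled back into the induction at an earlier level and reabsorbed via Young's inequality, all while carefully preserving the bump weight $\gamma^s$ with a non-negative power of $\gamma$ throughout the entire cascade of integrations by parts.
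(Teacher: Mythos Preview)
The paper does not prove this lemma; it is quoted from \cite[Lemma~5.3]{Ke} as an analogue of \cite[Corollary~5.5]{KS}. Your overall architecture---H\"older reduction with exponents $p_j=2k/i_j$ followed by a single-factor Gagliardo--Nirenberg bound proved by integration by parts---is exactly the Kuwert--Sch\"atzle scheme behind those references, so at the level of strategy you are aligned with the source.

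There is, however, a genuine gap in your inductive step. After one integration by parts the main term is, schematically,
\[
\int_M \gamma^s\,|\nabla^{(i)}\phi|^{2k/i-2}\,|\nabla^{(i-1)}\phi|\,|\nabla^{(i+1)}\phi|\,d\mathrm{vol}_g,
\]
which carries a factor $\nabla^{(i-1)}\phi$, not $\phi$ itself. For $i\geq 2$ you cannot bound $|\nabla^{(i-1)}\phi|$ by $\|\phi\|_\infty$, so the recursion you wrote, with $C_\epsilon\|\phi\|_\infty^{2k/i-2}$ appearing directly on the right-hand side after a single step, does not follow. What a single integration by parts together with H\"older (exponents $\tfrac{2k}{i-1},\tfrac{2k}{i+1},\tfrac{k}{k-i}$) actually yields is the multiplicative inequality
\[
A_i^{\,i/k}\;\leq\;C\,A_{i-1}^{\,(i-1)/(2k)}\,A_{i+1}^{\,(i+1)/(2k)},\qquad A_j:=\int_M\gamma^s|\nabla^{(j)}\phi|^{2k/j}\,d\mathrm{vol}_g,
\]
i.e.\ a log-convexity relation linking three consecutive levels. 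One then either runs a convexity/telescoping argument from $j=1$ (where $|\nabla^{(0)}\phi|\leq\|\phi\|_\infty$ is legitimate) up to $j=k$, or iterates the two-term estimate $A_j\leq C\|\phi\|_\infty^{2k/(j(j+1))}A_{j+1}+\text{l.o.t.}$ obtained at each stage; the exponents then telescope via $\sum_{j=i}^{k-1}\tfrac{2k}{j(j+1)}=\tfrac{2k}{i}-2$, which is the power of $\|\phi\|_\infty$ you want. Your descending induction as written tries to absorb the $\nabla^{(i-1)}\phi$ contribution in one shot and therefore does not close; once you replace it by the convexity argument above, the rest of your proof (H\"older assembly, handling of bump-function and curvature-commutator corrections) goes through.
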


\begin{lemma} (\cite[Corollary 5.2]{Ke}) \label{ii}
Let $(E,h)$ be a Hermitian vector bundle over a Riemannian manifold $(M,g)$, with connection $\nabla$, and $\gamma$ a bump function on $M$. For
$2\leq p<+\infty$, $l\in \mathbb{N}$, $s\geq lp$, there exists $C(\varepsilon^{-1})=C(\varepsilon^{-1}, \dim (M),\mathrm{rk} (E),p,l,s,g,h,\gamma)\in \mathbb{R}_{>0}$ such that for a section $\phi$ of $E$, we have
\begin{equation*}
\|\gamma^{s/p}\nabla^{(l)}\phi\|_{L^p}\leq \varepsilon \|\gamma^{(s+jp)/p}\nabla^{(l+j)}\phi\|_{L^p}+C(\varepsilon^{-1})\|\phi\|_{L^p,\gamma>0}.
\end{equation*}
For $p=2$ and some $K\geq 1$, we have
$$K\|\gamma^{s/2}\nabla^{(l)}\phi\|^2_{L^2}\leq \varepsilon \|\gamma^{(s+2j)/2}\nabla^{(l+j)}\phi\|^2_{L^2}
+C(\varepsilon^{-1})K^2\|\phi\|^2_{L^2,\gamma>0}.$$
\end{lemma}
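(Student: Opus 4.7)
The plan is to deduce this Peter-Paul-type interpolation inequality from a weighted Gagliardo-Nirenberg estimate via Young's inequality, where the weighted GN in turn arises from an iterated integration-by-parts argument in the spirit of Kuwert-Sch\"atzle \cite{KS}. Concretely, I proceed in three steps: first, establish a three-term base inequality relating three consecutive derivative orders; second, exploit the resulting log-convex structure to reach the endpoints $l+j$ and $0$; and third, apply Young's inequality to pass to the additive form stated in the lemma.

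For the base step, I handle $p=2$ first. Integration by parts in
\[
\int_M \gamma^s |\nabla^{(l)}\phi|^2\, d\mathrm{vol}_g = \int_M \gamma^s \langle \nabla^{(l)}\phi, \nabla \nabla^{(l-1)}\phi\rangle\, d\mathrm{vol}_g
\]
via the adjoint identity $\int \langle \nabla \xi, \eta\rangle = \int \langle \xi, \nabla^* \eta\rangle$ and the product rule $\nabla^*(\gamma^s \alpha) = \gamma^s \nabla^*\alpha - s\gamma^{s-1}\langle \nabla\gamma, \alpha\rangle$ produces a principal term $\int \gamma^s \langle \nabla^{(l-1)}\phi, \nabla^*\nabla^{(l)}\phi\rangle\, d\mathrm{vol}_g$ together with a lower-order term involving $\nabla\gamma$. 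Cauchy-Schwarz and Young's inequality, applied after the splittings $\gamma^s = \gamma^{(s-2)/2}\gamma^{(s+2)/2}$ and $\gamma^{s-1} = \gamma^{(s-2)/2}\gamma^{s/2}$, absorb the component proportional to $\|\gamma^{s/2}\nabla^{(l)}\phi\|_{L^2}^2$ into the left side and yield the three-point bound
\[
\|\gamma^{s/2}\nabla^{(l)}\phi\|_{L^2}^2 \leq \delta\|\gamma^{(s+2)/2}\nabla^{(l+1)}\phi\|_{L^2}^2 + C(\delta^{-1})\|\gamma^{(s-2)/2}\nabla^{(l-1)}\phi\|_{L^2}^2.
\]
The hypothesis $s \geq lp$ ensures that the weight exponents remain nonnegative as this inequality is iterated down to $\phi$. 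For general $p\geq 2$, the same IBP performed on $\int \gamma^s |\nabla^{(l)}\phi|^{p-2}\langle \nabla^{(l)}\phi, \nabla\nabla^{(l-1)}\phi\rangle\, d\mathrm{vol}_g$, together with H\"older at exponents $p$ and $p/(p-1)$, yields the analogous $L^p$ three-point bound.

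Setting $B_m := \|\gamma^{(s+(m-l)p)/p}\nabla^{(m)}\phi\|_{L^p}^p$ for $m=0,1,\ldots,l+j$, the three-point bound reads $B_m \leq \delta B_{m+1} + C(\delta^{-1})B_{m-1}$ on the interior of this range. Optimizing in $\delta$ yields $B_m \leq C\sqrt{B_{m-1}B_{m+1}}$, i.e.\ discrete log-convexity of $(B_m)$ up to a uniform constant. A standard induction then promotes this into the weighted Gagliardo-Nirenberg interpolation $B_l \leq C\, B_{l+j}^{l/(l+j)} B_0^{j/(l+j)}$, which after undoing the normalization and bounding $B_0 \leq \|\gamma\|_\infty^{s-lp}\|\phi\|_{L^p,\gamma>0}^p$ becomes
\[
\|\gamma^{s/p}\nabla^{(l)}\phi\|_{L^p} \leq C \|\gamma^{(s+jp)/p}\nabla^{(l+j)}\phi\|_{L^p}^{l/(l+j)}\|\phi\|_{L^p,\gamma>0}^{j/(l+j)}.
\]
Young's inequality $a^\theta b^{1-\theta}\leq \varepsilon a + C(\varepsilon^{-1})b$ with $\theta=l/(l+j)$ then delivers the Peter-Paul form stated in the lemma.

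The $K$-refinement in the $p=2$ case follows by applying the basic inequality with $\varepsilon$ replaced by $\varepsilon/K$ and multiplying through by $K$; the resulting constant $K\cdot C(K/\varepsilon)$ is majorized by $K^2 C(\varepsilon^{-1})$ given the polynomial dependence of $C$ on $\varepsilon^{-1}$ produced by Young's inequality. The main technical obstacle I anticipate is the $p>2$ case of the base step, where a derivative falling on the factor $|\nabla^{(l)}\phi|^{p-2}$ generates an extra contribution of order $\nabla^{(l+1)}\phi$ that must be combined carefully with the remaining factors via H\"older; curvature commutators arising from $[\nabla,\nabla^*]$ are strictly lower order and can be swept into $C(\varepsilon^{-1})$.
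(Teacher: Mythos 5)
The paper offers no proof of this lemma at all --- it is imported verbatim from Kelleher \cite[Corollary 5.2]{Ke} --- so the comparison is against the cited source, and there your strategy (a weighted three-point inequality via integration by parts in the Kuwert--Sch\"atzle style, iteration to a multiplicative Gagliardo--Nirenberg estimate, then Young's inequality) is exactly the standard route and is sound for the first displayed inequality. The base step, the role of $s\geq lp$ in keeping the weight exponents nonnegative, and the $p>2$ complication you flag are all identified correctly. One minor point: the three-point bound you actually derive has the form $B_m\leq \delta B_{m+1}+(c_1+c_2\delta^{-1})B_{m-1}$, since absorbing the $\nabla\gamma$ cross term leaves a $\delta$-independent multiple of $B_{m-1}$; optimizing therefore gives log-convexity only up to an additive $B_{m-1}$ term, and the induction to the multiplicative form needs the standard extra care, though it does go through.

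The step that does not close as written is the $K$-refinement. Young's inequality with $\theta=l/(l+j)$ produces $C(\varepsilon^{-1})=c\,\varepsilon^{-l/j}$, so substituting $\varepsilon\mapsto\varepsilon/K$ and multiplying by $K$ yields the constant $c\,K^{1+l/j}\varepsilon^{-l/j}$, which is majorized by $K^2C(\varepsilon^{-1})$ only when $l\leq j$; for $l>j$ the required power of $K$ is $(l+j)/j>2$, and since the multiplicative interpolation is essentially sharp (test with oscillating sections on a ball where $\gamma\equiv 1$), no rearrangement of the argument recovers $K^2$ in that regime. Your appeal to ``the polynomial dependence of $C$ on $\varepsilon^{-1}$'' is therefore not sufficient as stated: you should either restrict the second display to $l\leq j$, state it with $K^{(l+j)/j}$, or note --- as is in fact the case everywhere the lemma is invoked in this paper --- that $K$ is a fixed finite quantity by hypothesis, so the precise power is immaterial. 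This imprecision is arguably inherited from the quoted statement itself rather than introduced by you, but the gap in your derivation of the second inequality is real.
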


\vskip 1cm

\section{The higher order Yang-Mills-Higgs flow} \label{tho}

We first computer the Euler-Lagrange equations of the Yang-Mills-Higgs $k$-functional to determine the corresponding Yang-Mills-Higgs $k$-flow. We then prove the local existence of this flow.
\begin{lemma}
The Euler-Lagrange equations associated to the Yang-Mills-Higgs $k$-functional \eqref{YMHKF} are given by \eqref{EL}.
\end{lemma}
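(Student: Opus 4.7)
The plan is to compute the first variation of $\mathcal{YMH}_k$ along independent perturbations of $u$ (with $\nabla$ fixed) and of $\nabla$ (with $u$ fixed), and to read off the Euler--Lagrange system from the demand that the variation vanish on arbitrary test objects $\phi\in\Gamma(E)$ and $A\in\Omega^1(\mathrm{End}(E))$.

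For the $u$-variation only the term $|\nabla^{(k+1)}u|^2$ contributes. Setting $u_s=u+s\phi$ one computes $\frac{d}{ds}\big|_{s=0}\tfrac12\int|\nabla^{(k+1)}u_s|^2\,d\mathrm{vol}_g=\int\langle\nabla^{(k+1)}u,\nabla^{(k+1)}\phi\rangle\,d\mathrm{vol}_g$ and integrates by parts $k+1$ times to obtain $\int\langle\nabla^{*(k+1)}\nabla^{(k+1)}u,\phi\rangle\,d\mathrm{vol}_g$. Arbitrariness of $\phi$ yields the second equation of \eqref{EL}.

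For the $\nabla$-variation, set $\nabla_s=\nabla+sA$. Two Leibniz-type identities, obtained by induction on $k$ from the basic fact $\frac{d}{ds}\big|_{s=0}F_{\nabla_s}=D_\nabla A$, drive the computation: schematically,
$$\tfrac{d}{ds}\big|_{s=0}\nabla_s^{(k)}F_{\nabla_s}=\nabla^{(k)}(D_\nabla A)+\sum_{j=0}^{k-1}\nabla^{(j)}\bigl(A\ast\nabla^{(k-1-j)}F_\nabla\bigr),$$
$$\tfrac{d}{ds}\big|_{s=0}\nabla_s^{(k+1)}u=\sum_{i=0}^{k}\nabla^{(i)}\bigl(A\ast\nabla^{(k-i)}u\bigr).$$
Pairing the first with $\nabla^{(k)}F_\nabla$ and the second with $\nabla^{(k+1)}u$ and summing gives the full first variation, which must vanish for all $A$. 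The leading contribution $\int\langle\nabla^{(k)}F_\nabla,\nabla^{(k)}D_\nabla A\rangle\,d\mathrm{vol}_g$ is reduced by Lemma \ref{a1} (with $\xi=F_\nabla$, $\zeta=D_\nabla A$) to $(-1)^k\int\langle\Delta_\nabla^{(k)}F_\nabla,D_\nabla A\rangle\,d\mathrm{vol}_g$ plus curvature remainders, and one final integration by parts via $D_\nabla$ produces the principal term $(-1)^k D^*_\nabla\Delta_\nabla^{(k)}F_\nabla$ paired with $A$. The Lemma \ref{a1} remainders involving $\nabla^{(v)}Rm$ (absorbed into the background tensor $T$) and $\nabla^{(v)}F_\nabla$ for $v\le 2k-2$, together with the connection-variation terms $\nabla^{(j)}(A\ast\nabla^{(k-1-j)}F_\nabla)$, collect after stripping derivatives from $A$ into the linear contributions $\sum_{v=0}^{2k-1}P_1^{(v)}[F_\nabla]$ and the quadratic contribution $P_2^{(2k-1)}[F_\nabla]$. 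The Higgs part, after moving $i$ derivatives off $A$, yields $\sum_{i=0}^{k}\nabla^{*(i)}\bigl(\nabla^{(k+1)}u\ast\nabla^{(k-i)}u\bigr)$. Assembling all terms gives the first equation of \eqref{EL}.

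The principal technical obstacle is the last bookkeeping step: verifying that every correction term generated by commuting covariant derivatives and by varying the connection fits one of the two $P$-templates. This is manageable because the $P$-notation discards arbitrary multilinear combinations, metric contractions, and permutations, so only the number of $F_\nabla$-factors (one or two) and the total derivative order (at most $2k-1$) need to be tracked, and both counts are immediate from the structure of the two Leibniz identities and the remainder in Lemma \ref{a1}.
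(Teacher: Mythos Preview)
Your proposal is correct and follows essentially the same approach as the paper: both separate the variation into the $u$-variation (giving the second equation by $(k+1)$-fold integration by parts) and the $\nabla$-variation, use the same Leibniz-type identities for $\frac{d}{ds}\nabla_s^{(k)}F_{\nabla_s}$ and $\frac{d}{ds}\nabla_s^{(k+1)}u$ (the paper's formulas \eqref{ke} and \eqref{sa}), invoke Lemma~\ref{a1} to extract the principal term $(-1)^kD^*_\nabla\Delta_\nabla^{(k)}F_\nabla$, and then absorb all curvature and commutator remainders into the $P$-notation. Your treatment of the bookkeeping---tracking only the number of $F_\nabla$-factors and the total derivative order---is exactly how the paper's $P$-calculus is meant to be used.
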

\begin{proof}
Let $u_t$ be a path of Higgs fields, with initial value $u_0=u$. Then
\begin{equation} \label{vh1}
\frac{\partial}{\partial t}\Big|_{t=0}\frac{1}{2}\int_M\langle \nabla^{(k+1)}u_t,\nabla^{(k+1)}u_t\rangle=\int_M \langle \frac{\partial u_t}{\partial t},\nabla^{*(k+1)}\nabla^{(k+1)}u_t\rangle\Big|_{t=0}.
\end{equation}

Next, let $\nabla_t$ be a path of connections, with initial value $\nabla_0=\nabla$. Then
\begin{equation} \label{vc1}
\begin{split}
\frac{\partial}{\partial t}\Big|_{t=0}\frac{1}{2}\int_M\langle \nabla_t^{(k+1)}u,\nabla^{(k+1)}_tu\rangle
&=\int_M \langle \frac{\partial(\nabla_t^{(k+1)}u)}{\partial t},\nabla_t^{(k+1)}u\rangle\Big|_{t=0}\\
&=\int_M \langle \sum_{i=0}^k\nabla_t^{(i)}\frac{\partial \nabla_t}{\partial t}\ast \nabla_t^{(k-i)}u,\nabla_t^{(k+1)}u\rangle\Big|_{t=0}\\
&=\int_M \langle \frac{\partial \nabla_t}{\partial t},\sum_{i=0}^k\nabla_t^{*(i)} (\nabla_t^{(k+1)}u\ast \nabla_t^{(k-i)}u)\rangle\Big|_{t=0},
\end{split}
\end{equation}
where we used the following variation formula, which can be proved by induction on $k$:
\begin{equation} \label{sa}
\frac{\partial}{\partial t}\Big(\nabla^{(k+1)}_tu_t\Big)=\nabla^{(k+1)}_t\frac{\partial u_t}{\partial t}+\sum_{i=0}^{k}\Big(\nabla_t^{(i)}\frac{\partial \nabla_t}{\partial t}\Big)\ast \Big(\nabla_t^{(k-i)}u_t\Big).
\end{equation}
At last, we compute
\begin{equation} \label{vc2}
\begin{split}
&\frac{\partial}{\partial t}\Big|_{t=0}\frac{1}{2}\int_M\langle \nabla_t^{(k)}F_{ \nabla_t},\nabla^{(k)}_tF_{\nabla_t}\rangle\\
&=\int_M \langle \frac{\partial(\nabla^{(k)}_tF_{\nabla_t})}{\partial t}, \nabla^{(k)}_tF_{\nabla_t}\rangle\Big|_{t=0}\\
&=\int_M \langle \nabla^{(k)}_t\frac{\partial F_{\nabla_t}}{\partial t}+\sum_{i=0}^{k-1}\nabla_t^{(i)}\frac{\partial \nabla_t}{\partial t}\ast \nabla_t^{(k-i-1)}F_{\nabla_t},\nabla^{(k)}_tF_{\nabla_t}\rangle\Big|_{t=0}\\
&=\int_M \langle \frac{\partial \nabla_t}{\partial t}, (-1)^kD^*_{\nabla_t}\Delta_{\nabla_t}^{(k)}F_{\nabla_t}+\sum_{v=0}^{2k-1}P_1^{(v)}[F_{\nabla_t}]+P_2^{(2k-1)}[F_{\nabla_t}]\rangle \Big|_{t=0},
\end{split}
\end{equation}
where we used (\protect {\cite[Corollary 2.2]{Ke}})
\begin{equation} \label{ke}
\frac{\partial}{\partial t}\Big(\nabla^{(k)}_tF_{\nabla_t}\Big)=\nabla^{(k)}_t\frac{\partial F_{\nabla_t}}{\partial t}+\sum_{i=0}^{k-1}\nabla_t^{(i)}\frac{\partial \nabla_t}{\partial t}\ast \nabla_t^{(k-i-1)}F_{\nabla_t},
\end{equation}
$$\frac{\partial F_{\nabla_t}}{\partial t}=D_{\nabla_t}\frac{\partial \nabla_t}{\partial t},$$
and Lemma \ref{a1}.

Hence we prove the lemma by combining \eqref{vh1},\eqref{vc1} and \eqref{vc2}.

\end{proof}

Given 1-parameter pairs $(\nabla_t,u_t)$, we can define Yang-Mills-Higgs $k$-flow by \eqref{YMHKS}. Then we will use De Turck's trick to establish the local existence of the Yang-Mills-Higgs $k$-flow. We refer to \cite{Ke} for more details. The proof is standard, we will outline the procedures.

\begin{theorem} \label{le}
Let $(E,h)$ be a vector bundle over a closed Riemannian manifold $(M,g)$. There exists a unique smooth solution $(\nabla_t,u_t)$ to the Yang-Mills-Higgs $k$-flow in $M\times [0,\epsilon)$ with smooth initial value $(\nabla(0),u(0))$.
\end{theorem}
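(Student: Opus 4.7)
\textbf{Proof plan for Theorem \ref{le}.} The connection equation in \eqref{YMHKS} is not parabolic as it stands: the leading term $(-1)^{k+1}D_{\nabla_t}^*\Delta_{\nabla_t}^{(k)}F_{\nabla_t}$ has order $2k+2$, and by gauge invariance of $\mathcal{YMH}_k$ its principal symbol annihilates the image of $D_{\nabla}$ applied to $\Omega^0(\mathrm{End}\,E)$, so the symbol is only non-negative, not positive. The plan is the standard De Turck trick, adapted to order $2k+2$ as in Kelleher \cite{Ke} and Saratchandran \cite{Sa}: fix a background connection $\widetilde{\nabla}$ (take $\widetilde{\nabla}=\nabla(0)$ for convenience) and, writing any other connection as $\nabla=\widetilde{\nabla}+A$, add a gauge-fixing term of the form $(-1)^{k+1}D_{\nabla_t}\bigl(\Delta_{\widetilde{\nabla}}^{(k)}(\widetilde{\nabla}^*A_t)\bigr)$ to the right-hand side of the $\nabla_t$-equation. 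Using the Weitzenb\"ock formula of Lemma \ref{wf} and the commutation formulas in Lemmas \ref{a}--\ref{a1}, the modified leading operator on $A_t$ becomes $-\Delta^{k+1}$ modulo lower-order terms, whose principal symbol is $|\xi|^{2k+2}\,\mathrm{Id}$, i.e.\ strongly elliptic.

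Once the connection equation is made strictly parabolic (of order $2k+2$), the coupled system for $(A_t,u_t)$ is a quasilinear parabolic system of order $2k+2$, since the Higgs equation $\partial_t u_t=-\nabla_t^{*(k+1)}\nabla_t^{(k+1)}u_t$ already has strictly positive principal symbol $|\xi|^{2k+2}$ (the coupling appears only through lower-order coefficients depending on $A_t$). Short-time existence and uniqueness of a smooth solution $(A_t,u_t)$ on $M\times[0,\epsilon)$ then follow from the standard theory of quasilinear higher-order parabolic systems on closed manifolds, e.g.\ via an inverse function theorem in parabolic H\"older or Sobolev spaces, or via a contraction mapping argument based on the linearised heat kernel as carried out in \cite[\S 3]{Ke}.

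To remove the gauge-fixing term, solve the ODE on the gauge group
\begin{equation*}
\frac{\partial S_t}{\partial t}=-S_t\circ\bigl(\Delta_{\widetilde{\nabla}}^{(k)}\widetilde{\nabla}^*A_t\bigr),\qquad S_0=\mathrm{Id},
\end{equation*}
on $[0,\epsilon)$; the coefficient is smooth in $t$ and $x$, so a unique smooth solution $S_t$ in the gauge group exists. A direct computation, using the equivariance of $D_\nabla$, $F_\nabla$, $\Delta_\nabla^{(k)}$ and $\nabla^{*(k+1)}\nabla^{(k+1)}$ under gauge transformations, shows that $(\nabla_t,u_t):=(S_t^*(\widetilde\nabla+A_t),\,S_t^{-1}u_t)$ (with the obvious abuse of notation) solves the original flow \eqref{YMHKS} with the prescribed initial datum. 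Uniqueness for \eqref{YMHKS} is obtained by reversing the construction: given two solutions with the same initial data, apply suitable gauge transformations to bring both into the De Turck gauge and invoke uniqueness there.

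The main technical obstacle is verifying that after the gauge modification the full principal part of the linearisation in the connection variable is $-\Delta^{k+1}$ to top order, despite the curvature nonlinearities hidden inside $\Delta_{\nabla_t}^{(k)}$; this is exactly where the commutation formulas of Lemmas \ref{a}--\ref{a1} are crucial, since every commutator produces an error term of order $\leq 2k+1$ which is absorbed into the lower-order parabolic perturbation. The rest is bookkeeping of lower-order coupling terms between $A_t$ and $u_t$, and a routine bootstrap from the base regularity given by the parabolic existence theorem to full $C^\infty$ smoothness in $M\times[0,\epsilon)$.
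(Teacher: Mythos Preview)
Your overall strategy---the higher-order De Turck trick followed by a gauge ODE to return to the original flow---is exactly the one the paper uses, and the diagnosis of the degeneracy in the principal symbol is correct. There is, however, a genuine gap in the execution.

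You assert that the Higgs equation needs no modification because its principal symbol is already $|\xi|^{2k+2}\mathrm{Id}$. That is true for parabolicity, but it is \emph{not} sufficient for the gauge-removal step. When you pass from the De Turck solution $(\widetilde\nabla+A_t,u_t)$ to $(S_t^*(\widetilde\nabla+A_t),S_t^{-1}u_t)$, the time derivative of $S_t^{-1}u_t$ acquires an extra term coming from $\partial_t S_t^{-1}$; with your ODE $\partial_t S_t=-S_t\,\Delta_{\widetilde\nabla}^{(k)}\widetilde\nabla^*A_t$ this extra term is $(\Delta_{\widetilde\nabla}^{(k)}\widetilde\nabla^*A_t)\cdot S_t^{-1}u_t$, which does not cancel against anything in the unmodified Higgs equation. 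Consequently the pair $(S_t^*(\widetilde\nabla+A_t),S_t^{-1}u_t)$ does \emph{not} solve \eqref{YMHKS} as claimed. The paper avoids this by also modifying the Higgs equation in the De Turck system, adding the zeroth-order term $-(-1)^k\bigl(\Delta_{\widetilde\nabla_t}^{(k)}D_{\widetilde\nabla_t}^*(\widetilde\nabla_t-\nabla(0))\bigr)\widetilde u_t$; this term is designed precisely so that it is cancelled by the gauge ODE when one pulls back by $g(t)$, and being of order $\leq 2k+1$ in $\widetilde u_t$ (and of order $2k+1$ in the connection variable) it does not disturb parabolicity. A second, smaller point: the paper builds the gauge-fixing operator with the \emph{moving} connection $\widetilde\nabla_t$ rather than the fixed background $\widetilde\nabla$, which makes the cancellation in the gauge-removal step exact rather than up to lower-order errors that would then have to be argued away separately.

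The fix is cheap---add the compensating zeroth-order term to your De Turck Higgs equation and adjust the gauge ODE accordingly---but without it the ``direct computation'' you invoke does not go through.
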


\begin{proof} ({\bf Local existence}) We first introduce 1-parameter $(\widetilde{\nabla}_t,\widetilde{u_t})$ satisfying the following system
\begin{equation} \label{ps}
\begin{cases}
\frac{\partial \widetilde{\nabla}_t}{\partial t}=(-1)^{k+1}D^*_{\widetilde{\nabla}}\Delta^{(k)}_{\widetilde{\nabla}_t}F_{\widetilde{\nabla}_t}
+(-1)^kD_{\widetilde{\nabla}_t}\Delta_{\widetilde{\nabla}_t}^{(k)}D^{*}_{\widetilde{\nabla}_t}(\widetilde{\nabla}_t-\nabla(0))
+\sum\limits_{v=0}^{2k-1}P_1^{(v)}[F_{\widetilde{\nabla}_t}]\\
\quad \quad +P_2^{(2k-1)}[F_{\widetilde{\nabla}_t}]+\sum\limits_{i=0}^k\widetilde{\nabla}_t^{*(i)}(\widetilde{\nabla}_t^{(k+1)}u_t\ast \widetilde{\nabla}_t^{(k-i)}u_t),\\
\frac{\partial \widetilde{u}_t}{\partial t}=-\widetilde{\nabla}_t^{*(k+1)}\widetilde{\nabla}_t^{(k+1)}\widetilde{u}_t-(-1)^k\Big(\Delta_{\widetilde{\nabla}_t}^{(k)}D^{*}_{\widetilde{\nabla}_t}(\widetilde{\nabla}_t-\nabla(0))\Big)\widetilde{u}_t,\\
\widetilde{\nabla}(0)=\nabla(0),\\
 \widetilde{u}(0)=u(0).
\end{cases}
\end{equation}

From \protect {\cite[Lemma 3.2]{Ke}}, the operator $\Phi_k(\cdot,\nabla(0))$, given by
$$ \Phi_k(\widetilde{\nabla}_t,\nabla(0))=
(-1)^{k+1}D^*_{\widetilde{\nabla}_t}\Delta^{(k)}_{\widetilde{\nabla}_t}F_{\widetilde{\nabla}_t}
+(-1)^kD_{\widetilde{\nabla}_t}\Delta_{\widetilde{\nabla}_t}^{(k)}D^{*}_{\widetilde{\nabla}_t}(\widetilde{\nabla}_t-\nabla(0))
$$
is elliptic.
Using Lemma \ref{a}, we have
$$\widetilde{\nabla}_t^{*(k+1)}\widetilde{\nabla}_t^{(k+1)}\widetilde{u}_t=(-1)^{k+1}\Delta^{(k+1)}_{\widetilde{\nabla}_t}\widetilde{u}_t+\sum_{i=0}^{2k}(\nabla_M^{(i)}Rm+\widetilde{\nabla}^{(i)}F_{\widetilde{\nabla}_t})
\ast\widetilde{\nabla}^{(2k-i)}_t\widetilde{u}_t.$$
From \protect {\cite[Lemma 3.5]{Ke}}, we have
$$\Big(\Delta_{\widetilde{\nabla}_t}^{(k)}D^{*}_{\widetilde{\nabla}_t}(\widetilde{\nabla}_t-\nabla(0))\Big)\widetilde{u}_t=-\Delta_{\widetilde{\nabla}_t}^{(k+1)}\widetilde{u}_t+\alpha(\widetilde{\nabla}_t,\widetilde{u}_t),$$
where $\alpha(\widetilde{\nabla}_t,\widetilde{u}_t)$ is lower order than $\Delta_{\widetilde{\nabla}_t}^{(k+1)}\widetilde{u}_t$.
Hence ellipticity of the highest order term in the system \eqref{ps} follows. Therefore, the system \eqref{ps} is parabolic and has short time existence.

Define a gauge $g(t)$ as
\begin{equation*} \label{gauge}
\begin{cases}
\frac{\partial g(t)}{\partial t}=(-1)^{k+1}\Delta_{\widetilde{\nabla}_t}^{(k)}D^{*}_{\widetilde{\nabla}_t}(\widetilde{\nabla}_t-\nabla(0))g(t)\\
g(0)=\mathrm{id}.
\end{cases}
\end{equation*}
One can check that $(g(t)^*\widetilde{\nabla}_t,g(t)^*\widetilde{u}_t)$ satisfies the Yang-Mills-Higgs $k$-flow \eqref{YMHKS} with initial condition $(g(0)^*\widetilde{\nabla}_0,g(0)^*\widetilde{u}_0)=(\nabla_0,u_0)$. This proves the short time existence.

({\bf Uniqueness}) If we have two solutions to the Yang-Mills-Higgs $k$-flow \eqref{YMHKS}, $(\nabla_1(t),u_1(t))$ and $(\nabla_2(t),u_2(t))$, with the same initial value $(\nabla(0),u(0))$. Then we can define two gauges $g_1$ and $g_2$ that satisfy the above gauge transformation equations, with $\nabla_1$ and $\nabla_2$ respectively. We then find that $((g_1^{-1})^*\nabla_1,(g_1^{-1})^*u_1)$ and $((g_2^{-1})^*\nabla_2,(g_2^{-1})^*u_2)$ both solve the parabolic system \eqref{ps} with the same initial value $(\nabla(0),u(0))$. Uniqueness of this system implies that
$$((g_1^{-1})^*\nabla_1,(g_1^{-1})^*u_1)=((g_2^{-1})^*\nabla_2,(g_2^{-1})^*u_2),$$
 which means
 $$(\nabla_1,u_1)=((g_2^{-1}g_1)^*\nabla_2,(g_2^{-1}g_1)^*u_2).$$
 Define a new gauge $g_3=g_2^{-1}g_1$, a direct calculation yields
\begin{equation*}
\begin{cases}
\frac{\partial g_3}{\partial t}=g_3(-1)^{k+1}\Delta^{(k)}_{g_3^*\nabla_2}D^*_{g_3^*\nabla_2}(g_3^{*}\nabla_2-\nabla(0))-(-1)^{k+1}\Delta^{(k)}_{\nabla_2}D^*_{\nabla_2}(\nabla_2-\nabla(0))g_3,\\
g_3(0)=\mathrm{id}.
\end{cases}
\end{equation*}
Clearly, $\mathrm{id}$ is a solution to the above ODE. By uniqueness of ODE, $g_3(t)=\mathrm{id}$.

\end{proof}

\vskip 1cm

\section{Smoothing estimates} \label{se}

In this section, our goal is to obtain derivative estimates of $F_{\nabla_t}$ and $u_t$. To accomplish this we first compute necessary evolution equations.

\subsection{Evolution equations}

\begin{lemma}
Suppose $(\nabla_t,u_t)$ is a solution to the Yang-Mills-Higgs $k$-flow \eqref{YMHKS} defined on $M\times [0,T)$. Then
\begin{equation} \label{ef1}
\frac{\partial F_{\nabla_t}}{\partial t}=(-1)^{k}\Delta_{\nabla_t}^{(k+1)}F_{\nabla_t}+\sum_{v=0}^{2k}P^{(v)}_1[F_{\nabla_t}]+P_2^{(2k)}[F_{\nabla_t}]
+\sum_{i=0}^kD_{\nabla_t}\nabla_t^{*(i)}(\nabla_t^{(k+1)}u_t\ast \nabla_t^{(k-i)}u_t),
\end{equation}
and for $l\in\mathbb{N}$,
\begin{equation} \label{ef2}
\begin{split}
\frac{\partial }{\partial t}[\nabla_t^{(l)}F_{\nabla_t}]
&=(-1)^{k}\Delta_t^{(k+1)}\nabla_t^{(l)}F_{\nabla_t}+\sum_{v=0}^{2k+l}\Big(P^{(v)}_1[F_{\nabla_t}]+P_2^{(v)}[F_{\nabla_t}]\Big)\\
&\quad +P_3^{(2k+l-2)}[F_{\nabla_t}]+\sum_{i=0}^k\nabla_t^{(l)}D_{\nabla_t}\nabla_t^{*(i)}(\nabla_t^{(k+1)}u_t\ast \nabla_t^{(k-i)}u_t)\\
&\quad +\sum_{j=0}^{l-1}\sum_{i=0}^k\Big[\nabla^{(j)}_t\Big(\nabla^{*(i)}_t(\nabla^{(k+1)}_tu_t\ast \nabla_t^{(k-i)}u_t)\Big)\Big]\ast \nabla_t^{(l-j-1)}F_{\nabla_t},
\end{split}
\end{equation}

\end{lemma}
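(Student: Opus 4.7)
The plan is to derive (\ref{ef1}) first and then deduce (\ref{ef2}) from it by differentiating in the spatial directions. For (\ref{ef1}), the starting point is the standard identity $\partial_t F_{\nabla_t} = D_{\nabla_t}(\partial_t\nabla_t)$, already used in \eqref{vc2}. Substituting the flow equation \eqref{YMHKS} yields four kinds of contributions. The principal one is $(-1)^{k+1}D_{\nabla_t}D^{*}_{\nabla_t}\Delta^{(k)}_{\nabla_t}F_{\nabla_t}$; combining the Weitzenb\"ock formula (Lemma \ref{wf}) with the Bianchi identity $D_{\nabla_t}F_{\nabla_t}=0$ and the commutation identity in Lemma \ref{a0} (to push $D_{\nabla_t}$ through $\Delta^{(k)}_{\nabla_t}$) produces $(-1)^{k}\Delta^{(k+1)}_{\nabla_t}F_{\nabla_t}$ up to terms of the form $(\nabla^{(v)}Rm+\nabla^{(v)}F_{\nabla_t})\ast\nabla^{(2k-v)}F_{\nabla_t}$, which get absorbed into $\sum_{v=0}^{2k}P_1^{(v)}[F_{\nabla_t}]$ and $P_2^{(2k)}[F_{\nabla_t}]$. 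Applying $D_{\nabla_t}$ to the $P_1^{(v)}[F_{\nabla_t}]$ and $P_2^{(2k-1)}[F_{\nabla_t}]$ pieces of the flow simply raises the order of derivatives by one and keeps them of the same $P$-type, contributing further terms of the same form. Finally, $D_{\nabla_t}$ applied to the Higgs block yields $\sum_{i=0}^{k}D_{\nabla_t}\nabla_t^{*(i)}(\nabla_t^{(k+1)}u_t\ast\nabla_t^{(k-i)}u_t)$, which is exactly the last term of (\ref{ef1}).

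For (\ref{ef2}), the key input is the variation formula
\[
\frac{\partial}{\partial t}\bigl(\nabla_t^{(l)}F_{\nabla_t}\bigr)=\nabla_t^{(l)}\frac{\partial F_{\nabla_t}}{\partial t}+\sum_{j=0}^{l-1}\nabla_t^{(j)}\Bigl(\frac{\partial\nabla_t}{\partial t}\Bigr)\ast\nabla_t^{(l-j-1)}F_{\nabla_t},
\]
an immediate analogue of \eqref{ke} that can be proved by induction on $l$ exactly as \eqref{sa} was obtained. Inserting (\ref{ef1}) into the first summand and using Lemma \ref{a0} to swap $\nabla_t^{(l)}$ past $\Delta_{\nabla_t}^{(k+1)}$ produces the desired leading order term $(-1)^k\Delta_{\nabla_t}^{(k+1)}\nabla_t^{(l)}F_{\nabla_t}$ together with additional $P_1$ and $P_2$ contributions whose total derivative count lies in $\{0,\dots,2k+l\}$. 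The Higgs term $\nabla_t^{(l)}D_{\nabla_t}\nabla_t^{*(i)}(\nabla_t^{(k+1)}u_t\ast\nabla_t^{(k-i)}u_t)$ is simply carried along.

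For the last summand of the variation formula, I would substitute the flow equation \eqref{YMHKS} for $\partial_t\nabla_t$ and examine each piece in turn. The $D^{*}_{\nabla_t}\Delta^{(k)}_{\nabla_t}F_{\nabla_t}$ contribution, after taking $\nabla_t^{(j)}$ and pairing with $\nabla_t^{(l-j-1)}F_{\nabla_t}$, sits inside $\sum_{v}P_2^{(v)}[F_{\nabla_t}]$ with $v\leq 2k+l$. The $P_1^{(v)}[F_{\nabla_t}]$ terms similarly contribute further $P_2^{(\le 2k+l)}[F_{\nabla_t}]$ summands. The quadratic $P_2^{(2k-1)}[F_{\nabla_t}]$ piece of the flow, once hit by $\nabla_t^{(j)}$ and multiplied by $\nabla_t^{(l-j-1)}F_{\nabla_t}$, yields a cubic expression whose total derivative count is $(2k-1+j)+(l-j-1)=2k+l-2$; this is precisely the single $P_3^{(2k+l-2)}[F_{\nabla_t}]$ term appearing in the statement. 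The Higgs piece produces the final double sum $\sum_{j=0}^{l-1}\sum_{i=0}^{k}[\nabla_t^{(j)}(\nabla_t^{*(i)}(\nabla_t^{(k+1)}u_t\ast\nabla_t^{(k-i)}u_t))]\ast\nabla_t^{(l-j-1)}F_{\nabla_t}$.

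The main obstacle is not any single estimate but the careful index bookkeeping: one must verify that the total derivative counts in every $P$-term indeed lie in the ranges advertised in (\ref{ef2}), and that no cubic-or-higher-order combination of $F_{\nabla_t}$ escapes the single bucket $P_3^{(2k+l-2)}[F_{\nabla_t}]$. All of this follows mechanically from the closure of $P$-notation under $\nabla$-differentiation, the commutation Lemmas \ref{a}--\ref{a0}, and the Bianchi identity, but requires patience to write down term-by-term.
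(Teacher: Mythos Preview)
Your proposal is correct and follows essentially the same route as the paper: it uses $\partial_t F_{\nabla_t}=D_{\nabla_t}(\partial_t\nabla_t)$ together with the Weitzenb\"ock formula and the Bianchi identity to obtain (\ref{ef1}), and then applies the variation identity \eqref{ke} (which is exactly the formula you wrote, not merely an analogue of it) combined with Lemma~\ref{a0} to derive (\ref{ef2}), splitting into the two blocks $T_1=\nabla_t^{(l)}\partial_tF_{\nabla_t}$ and $T_2=\sum_j\nabla_t^{(j)}(\partial_t\nabla_t)\ast\nabla_t^{(l-j-1)}F_{\nabla_t}$. Your bookkeeping of the $P_1$, $P_2$, and $P_3^{(2k+l-2)}$ contributions, including the observation that the cubic term arises solely from $\nabla_t^{(j)}P_2^{(2k-1)}[F_{\nabla_t}]\ast\nabla_t^{(l-j-1)}F_{\nabla_t}$, matches the paper's computation exactly.
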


\begin{proof}
From \eqref{YMHKS} and Weitzenb\"{o}ck formula (Lemma \ref{wf}), we have
\begin{equation}
\begin{split}
\frac{\partial F_{\nabla_t}}{\partial t}
&=D_{\nabla_t}\frac{\partial \nabla_t}{\partial t}=(-1)^{k+1}D_{\nabla_t}D_{\nabla_t}^{*}\Delta_{\nabla_t}^{(k)}F_{\nabla_t}+\sum_{v=0}^{2k}P_1^{(v)}[F_{\nabla_t}]\\
&\quad +P_2^{(2k)}[F_{\nabla_t}]
+\sum_{i=0}^kD_{\nabla_t}\nabla_t^{*(i)}\Big(\nabla_t^{(k+1)}u_t\ast \nabla_t^{(k-i)}u_t\Big)\\
&=(-1)^{k+1}\Delta_{D_{\nabla_t}}\Delta_{\nabla_t}^{(k)}F_{\nabla_t}+\sum_{v=0}^{2k}P_1^{(v)}[F_{\nabla_t}]+P_2^{(2k)}[F_{\nabla_t}]\\
&\quad +\sum_{i=0}^kD_{\nabla_t}\nabla_t^{*(i)}\Big(\nabla_t^{(k+1)}u_t\ast \nabla_t^{(k-i)}u_t\Big)\\
&=(-1)^{k}\Delta_{\nabla_t}^{(k+1)}F_{\nabla_t}+(Rm+F_{\nabla_t})\ast(\Delta^{(k)}_{\nabla_t}F_{\nabla_t})+\sum_{v=0}^{2k}P_1^{(v)}[F_{\nabla_t}]\\
&\quad +P_2^{(2k)}[F_{\nabla_t}]+\sum_{i=0}^kD_{\nabla_t}\nabla_t^{*(i)}\Big(\nabla_t^{(k+1)}u_t\ast \nabla_t^{(k-i)}u_t\Big),
\end{split}
\end{equation}
which implies \eqref{ef1}.

From \eqref{ke} we have
\begin{equation}
\frac{\partial}{\partial t}[\nabla_t^{(l)}F_{\nabla_t}]
=\Big[\nabla_t^{(l)}D_{\nabla_t}\frac{\partial \nabla_t}{\partial t}\Big]_{T_1}
+\Big[\sum_{j=0}^{l-1}\Big(\nabla_t^{(j)}\frac{\partial \nabla_t}{\partial t}\ast \nabla^{(l-j-1)}_tF_{\nabla_t}\Big)\Big]_{T_2}.
\end{equation}

We manipulate $T_1$ first. Using the Weitzenb\"{o}ck formula (Lemma \ref{wf}) and Lemma \ref{a0} yields
\begin{equation*}
\begin{split}
T_1&=(-1)^k\nabla_t^{(l)}\Delta_{\nabla_t}^{(k+1)}F_{\nabla_t}+\nabla_t^{(l)}[(Rm+F_{\nabla_t})\ast \Delta^{(k)}_{\nabla_t}F_{\nabla_t}]\\
&\quad +\sum_{v=0}^{2k+l}P_1^{(v)}[F_{\nabla_t}]+P_2^{(2k+l)}[F_{\nabla_t}]
+\sum_{i=0}^k\nabla_t^{(l)}D_{\nabla_t}\nabla_t^{*(i)}\Big(\nabla_t^{(k+1)}u_t\ast \nabla_t^{(k-i)}u_t\Big)\\
&=(-1)^{k}\Delta_{\nabla_t}^{(k+1)}\nabla_t^{(l)}F_{\nabla_t}+\sum_{v=0}^{2k+l}P_1^{(v)}[F_{\nabla_t}]
+P_2^{(2k+l)}[F_{\nabla_t}]\\
&\quad +\sum_{i=0}^k\nabla_t^{(l)}D_{\nabla_t}\nabla_t^{*(i)}\Big(\nabla_t^{(k+1)}u_t\ast \nabla_t^{(k-i)}u_t\Big).
\end{split}
\end{equation*}

Next, we manipulate $T_2$.
\begin{equation*}
\begin{split}
T_2&=
\sum_{j=0}^{l-1}\Big[
\nabla_t^{(j)}
\Big(
(-1)^{k+1}D^*_{\nabla_t}\Delta^{(k)}_{\nabla_t}F_{\nabla_t}+\sum_{v=0}^{2k-1}P_1^{(v)}[F_{\nabla_t}]+P_2^{(2k-1)}[F_{\nabla_t}]\\
&\quad +\sum_{i=0}^k\nabla_t^{*(i)}(\nabla_t^{(k+1)}u_t\ast \nabla_t^{(k-i)}u_t)\Big)\ast\nabla_t^{(l-j-1)}F_{\nabla_t}
\Big]\\
&=P_2^{(2k+l)}[F_{\nabla_t}]+\sum_{v=0}^{2k+l-2}P_2^{(v)}[F_{\nabla_t}]+P_3^{(2k+l-2)}[F_{\nabla_t}]\\
&\quad +\sum_{j=0}^{l-1}\sum_{i=0}^k\Big[\nabla^{(j)}_t\Big(\nabla^{*(i)}_t(\nabla_t^{(k+1)}u_t\ast \nabla^{(k-i)}_tu_t)\Big)\Big]\ast \nabla_t^{(l-j-1)}F_{\nabla_t}.
\end{split}
\end{equation*}

Combining $T_1$ and $T_2$ yields \eqref{ef2}.

\end{proof}

\begin{lemma} \label{eh}
Suppose $(\nabla_t,u_t)$ is a solution to the Yang-Mills-Higgs $k$-flow \eqref{YMHKS} defined on $M\times [0,T)$. Then
\begin{equation}
\begin{split}
\frac{\partial }{\partial t}[\nabla_t^{(l)}u_t]
&=(-1)^{k}\Delta_{\nabla_t}^{(k+1)}\nabla_{t}^{(l)}u_t+\sum_{j=0}^{2k+l}(\nabla_M^{(j)}Rm+\nabla_t^{(j)}F_{\nabla_t})\ast\nabla_t^{(2k+l-j)}u_t\\
&\quad +\sum_{j=0}^{l-1}\nabla_t^{(j)}D_{\nabla_t}^*\Delta_{\nabla_t}^{(k)}F_{\nabla_t}\ast \nabla_{t}^{(l-j-1)}u_t
+\sum_{j=0}^{l-1}\sum_{v=0}^{2k+j-1}P_1^{(v)}[F_{\nabla_t}]\ast \nabla_t^{(l-j-1)}u_t\\
&\quad +\sum_{j=0}^{l-1}P_2^{(2k+j-1)}[F_{\nabla_t}]\ast \nabla_t^{(l-j-1)}u_t\\
&\quad +\sum_{j=0}^{l-1}\sum_{i=0}^k\Big[\nabla_t^{(j)}\nabla_t^{*(i)}(\nabla_t^{(k+1)}u_t\ast \nabla_t^{(k-i)}u_t)\Big]\ast \nabla_t^{(l-j-1)}u_t.
\end{split}
\end{equation}
\end{lemma}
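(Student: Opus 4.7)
The strategy mirrors the derivation of \eqref{ef2}: first use the variational formula \eqref{sa} (with $u$ replacing the role of $F$) to split $\partial_t[\nabla_t^{(l)}u_t]$ into a ``principal'' piece $\nabla_t^{(l)}\partial_tu_t$ and a ``connection-variation'' piece involving $\nabla_t^{(j)}\partial_t\nabla_t$; then substitute the flow equations \eqref{YMHKS} for $\partial_tu_t$ and $\partial_t\nabla_t$ into each piece and normalize the derivatives using Lemmas \ref{wf}, \ref{a}, \ref{a0}.

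First I would record the analogue of \eqref{sa} with $k+1$ replaced by $l$,
\begin{equation*}
\frac{\partial}{\partial t}\bigl[\nabla_t^{(l)}u_t\bigr]=\nabla_t^{(l)}\frac{\partial u_t}{\partial t}+\sum_{j=0}^{l-1}\Bigl(\nabla_t^{(j)}\frac{\partial \nabla_t}{\partial t}\Bigr)\ast\nabla_t^{(l-j-1)}u_t,
\end{equation*}
which is proved by the same straightforward induction on $l$. Call these terms $S_1$ and $S_2$ respectively.

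For $S_1$, I would substitute the second equation of \eqref{YMHKS}, then apply Lemma \ref{a} exactly as done in the proof of Theorem \ref{le} to rewrite
\begin{equation*}
\nabla_t^{*(k+1)}\nabla_t^{(k+1)}u_t=(-1)^{k+1}\Delta_{\nabla_t}^{(k+1)}u_t+\sum_{i=0}^{2k}(\nabla_M^{(i)}Rm+\nabla_t^{(i)}F_{\nabla_t})\ast\nabla_t^{(2k-i)}u_t,
\end{equation*}
and then apply $\nabla_t^{(l)}$ and invoke Lemma \ref{a0} (with $n=l$ and $k$ replaced by $k+1$) to commute $\nabla_t^{(l)}$ past $\Delta_{\nabla_t}^{(k+1)}$. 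This produces the leading term $(-1)^k\Delta_{\nabla_t}^{(k+1)}\nabla_t^{(l)}u_t$ together with commutator terms of the form $(\nabla_M^{(j)}Rm+\nabla_t^{(j)}F_{\nabla_t})\ast\nabla_t^{(2k+l-j)}u_t$ for $j\in\{0,\dots,2k+l\}$. Distributing $\nabla_t^{(l)}$ on the lower-order piece produced by Lemma \ref{a} only contributes additional terms of the same form (a Leibniz expansion increases the total derivative count on $F_{\nabla_t}$ and $u_t$ by $l$ but preserves the total order $2k+l$), so $S_1$ matches the first two summations in the stated identity.

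For $S_2$, I would plug the first equation of \eqref{YMHKS} into $\nabla_t^{(j)}\partial_t\nabla_t$ and distribute $\nabla_t^{(j)}$ across the four groups of terms: the leading $(-1)^{k+1}D^*_{\nabla_t}\Delta_{\nabla_t}^{(k)}F_{\nabla_t}$ produces (after multiplication by $\nabla_t^{(l-j-1)}u_t$) exactly the third summation in the statement; the $\sum_{v=0}^{2k-1}P_1^{(v)}[F_{\nabla_t}]$ term, after applying $\nabla_t^{(j)}$ and the Leibniz rule, reorganizes into $\sum_{v=0}^{2k+j-1}P_1^{(v)}[F_{\nabla_t}]$, matching the fourth summation; the $P_2^{(2k-1)}[F_{\nabla_t}]$ term similarly becomes $P_2^{(2k+j-1)}[F_{\nabla_t}]$; and the Higgs-coupling term $\sum_{i=0}^{k}\nabla_t^{*(i)}(\nabla_t^{(k+1)}u_t\ast\nabla_t^{(k-i)}u_t)$ is carried along untouched by Lemma~\ref{a0}-style commutations, supplying the last summation. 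Summing over $j\in\{0,\dots,l-1\}$ and adding $S_1$ then gives the asserted formula.

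The only technical subtlety is the bookkeeping in $S_2$: one must confirm that differentiating a $P_1^{(v)}[F_{\nabla_t}]$ term $j$ times and coupling it with $\nabla_t^{(l-j-1)}u_t$ lands in the class $P_1^{(v')}[F_{\nabla_t}]\ast\nabla_t^{(l-j-1)}u_t$ with $v'\le 2k+j-1$ (rather than $v'\le 2k+j$), which follows because each application of $\nabla_t$ to a product $\nabla^{(w_1)}F_{\nabla_t}\ast\cdots\ast\nabla^{(w_v)}F_{\nabla_t}$ redistributes exactly one derivative across the factors and hence increases $w_1+\cdots+w_v$ by $1$. Once this is checked, combining $S_1$ and $S_2$ yields the identity.
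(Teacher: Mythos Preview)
Your proposal is correct and follows essentially the same route as the paper: split $\partial_t[\nabla_t^{(l)}u_t]$ via \eqref{sa} into $\nabla_t^{(l)}\partial_tu_t$ plus the connection-variation sum, substitute the two flow equations of \eqref{YMHKS}, and then apply Lemmas \ref{a} and \ref{a0} to extract the leading Laplacian term and the commutator terms. The paper's proof is essentially a one-line sketch of exactly this argument; your version just fills in the bookkeeping (and the Weitzenb\"ock formula you mention is not actually needed here, consistent with the paper citing only Lemmas \ref{a} and \ref{a0}).
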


\begin{proof}
From \eqref{sa} and \eqref{YMHKS}, we have
\begin{equation*}
\begin{split}
\frac{\partial}{\partial t}[\nabla_t^{(l)}u_t]
&=\nabla_t^{(l)}\frac{\partial u_t}{\partial t}+\sum_{j=0}^{l-1}\nabla_t^{(j)}\frac{\partial \nabla_t}{\partial t}\ast \nabla_t^{(l-j-1)}u_t\\
&=\nabla_t^{(l)}(-\nabla_t^{*(k+1)}\nabla_t^{(k+1)}u_t)+\sum_{j=0}^{l-1}\nabla_t^{(j)}\Big[(-1)^{k+1}
D^*_{\nabla_t}\Delta_{\nabla_t}^{(k)}F_{\nabla_t}+\sum\limits_{v=0}^{2k-1}P_1^{(v)}[F_{\nabla_t}]\\
&\quad +P_2^{(2k-1)}[F_{\nabla_t}]+\sum\limits_{i=0}^k\nabla_t^{*(i)}(\nabla_t^{(k+1)}u_t\ast \nabla_t^{(k-i)}u_t)\Big]\ast \nabla_t^{(l-j-1)}u_t.
\end{split}
\end{equation*}
Then using Lemma \ref{a} and Lemma \ref{a0} yields the desired result.

\end{proof}

\subsection{Estimates for derivatives of the Higgs field}

In this subsection, we will prove local $L^2$-derivative estimates for the Higgs field.

The following proposition is a direct consequence of Lemma \ref{eh}.

\begin{proposition} \label{eh1}
Suppose $(\nabla_t,u_t)$ is a solution to the Yang-Mills-Higgs $k$-flow \eqref{YMHKS} defined on $M\times [0,T)$. Then
\begin{equation} \label{hf}
\begin{split}
\frac{\partial}{\partial t}\|\gamma^{s/2}\nabla_t^{(l)}u_t\|_{L^2}^2
&=2(-1)^{k}\int_M \Big\langle \Delta_{\nabla_t}^{(k+1)}\nabla_t^{(l)}u_t,\gamma^s\nabla_t^{(l)}u_t\Big\rangle\\
&\quad +\int_M \sum_{j=0}^{2k+l}\Big\langle (\nabla_M^{(j)}Rm+ \nabla_t^{(j)}F_{\nabla_t})\ast \nabla_t^{(2k+l-j)}u_t,\gamma^s \nabla_t^{(l)}u_t\Big\rangle\\
&\quad +\int_M \sum_{j=0}^{l-1}\Big\langle \nabla_t^{(j)}D_{\nabla_t}^*\Delta_{\nabla_t}^{(k)}F_{\nabla_t}\ast \nabla_t^{(l-j-1)}u_t,\gamma^s \nabla_t^{(l)}u_t\Big\rangle\\
&\quad +\int_M \sum_{j=0}^{l-1}\Big\langle (\sum_{v=0}^{2k+j-1}P_1^{(v)}[F_{\nabla_t}]+P_2^{(2k+j-1)}[F_{\nabla_t}])
\ast \nabla_t^{(l-j-1)}u_t,\gamma^s\nabla_t^{(l)}u_t\Big\rangle\\
&\quad +\int_M \sum_{j=0}^{l-1}\sum_{i=0}^k\Big\langle\Big[ \nabla_t^{(j)}\nabla_t^{*(i)}(\nabla_t^{(k+1)}u_t\ast \nabla_t^{(k-i)}u_t)\Big]\ast \nabla_t^{(l-j-1)}u_t,
\gamma^s \nabla_t^{(l)}u_t\Big\rangle.
\end{split}
\end{equation}
\end{proposition}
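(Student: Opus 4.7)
The plan is to differentiate the weighted squared $L^2$-norm pointwise in time and then substitute the evolution equation supplied by Lemma \ref{eh}. Since $\gamma$ is a time-independent bump function on $M$, and both the fibre metric $h$ on $E$ and the Riemannian metric $g$ are fixed in $t$, the time derivative passes under the integral sign and under the fixed volume form. Using compatibility of the pairing,
\[
\frac{\partial}{\partial t}\|\gamma^{s/2}\nabla_t^{(l)}u_t\|_{L^2}^2
= 2\int_M \gamma^s \Big\langle \tfrac{\partial}{\partial t}[\nabla_t^{(l)}u_t],\, \nabla_t^{(l)}u_t\Big\rangle \, d\mathrm{vol}_g.
\]

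From here I would simply plug in the identity from Lemma \ref{eh} and distribute the inner product with $\gamma^s\nabla_t^{(l)}u_t$ over each summand. The matching is line-by-line: the leading $(-1)^k\Delta_{\nabla_t}^{(k+1)}\nabla_t^{(l)}u_t$ produces the first integral on the right-hand side with the explicit coefficient $2(-1)^k$; the curvature-times-derivative sum indexed by $j=0,\dots,2k+l$ produces the second; the $\nabla_t^{(j)}D^*_{\nabla_t}\Delta_{\nabla_t}^{(k)}F_{\nabla_t}\ast \nabla_t^{(l-j-1)}u_t$ contributions give the third; the $P_1^{(v)}$ and $P_2^{(2k+j-1)}$ terms give the fourth; and the cross-terms $\nabla_t^{(j)}\nabla_t^{*(i)}(\nabla_t^{(k+1)}u_t\ast \nabla_t^{(k-i)}u_t)\ast \nabla_t^{(l-j-1)}u_t$ supply the fifth.

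The only bookkeeping subtlety is the overall factor of $2$ arising from differentiating a squared norm. It is carried explicitly on the leading Bochner line (which is written outside the $\ast$ convention), while on every other line the factor is silently absorbed into the $\ast$ notation, which by definition allows arbitrary multiplicative constants. No integration by parts is performed at this stage, no commutator formulas beyond those already embedded in Lemma \ref{eh} are needed, and no regularity issue arises since $(\nabla_t,u_t)$ is assumed smooth on $M\times [0,T)$.

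Accordingly, I do not expect a genuine obstacle: the proposition is a direct transcription of the pointwise evolution identity into an integral identity for the cutoff-weighted $L^2$-norm. Its purpose, as suggested by the section heading, is to set up the starting identity for the forthcoming local $L^2$ derivative estimates, where one will integrate by parts in the leading Bochner term to extract a negative definite contribution of the form $-2\|\gamma^{s/2}\nabla_t^{(k+1+l)}u_t\|_{L^2}^2$ (modulo boundary/cutoff commutator errors), and then invoke the interpolation inequalities of Lemmas \ref{i} and \ref{ii} to absorb the remaining terms.
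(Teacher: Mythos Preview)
Your proposal is correct and matches the paper's approach exactly: the paper states this proposition is a direct consequence of Lemma \ref{eh}, and your argument---differentiating the weighted $L^2$-norm, passing the time derivative under the integral, and substituting the evolution equation from Lemma \ref{eh} term by term---is precisely that direct consequence. Your observation about the factor of $2$ being explicit on the Bochner line and absorbed into the $\ast$ convention elsewhere is also accurate.
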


We will estimate each term on the right hand side of the above equality. We first introduce the bump function, which is highly necessary in the smooth estimates.

\begin{definition}
[Bump function] Let $\mathfrak{B}:=\{\gamma\in C^{\infty}_c(M):0\leq \gamma\leq 1\}$, that is, the family of bump functions. For $l\in \mathbb{N}$, we denote by
$$J^{(l)}_{\gamma}:=\sum_{j=0}^l\|\nabla^{(j)}\gamma\|_{L^{\infty}(M)}.$$
\end{definition}

We also need the following lemma. This can be proved by integration by parts and by induction method.

\begin{lemma}   [\protect {\cite[Lemma 3.10]{Ke}}] \label{K1}
Let $p,q,r,s\in \mathbb{N}$, $\nabla\in \mathcal{A}_E$ and $\gamma\in \mathfrak{B}$. If $s\in \mathbb{N}\backslash \{1\}$, then
\begin{equation*}
\begin{split}
\int_M(P_1^{(p)}[\phi]\ast P_1^{(q+r)}[\phi])\gamma^sd\mathrm{vol}_g
&\leq \int_M(P_1^{(p+r)}[\phi]\ast P_1^{(q)}[\phi])\gamma^sd\mathrm{vol}_g\\
&+\sum_{j=0}^{r-1}J^{(1)}_{\gamma}\int_M(P_1^{(p+j)}[\phi]\ast P_1^{(q+r-j-1)}[\phi])\gamma^{s-1}d\mathrm{vol}_g,
\end{split}
\end{equation*}
where $\phi$ is in some tensor product of $TM,E$, and their corresponding duals.

\end{lemma}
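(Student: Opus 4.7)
The plan is to proceed by induction on $r$, using integration by parts as the engine that transfers one derivative at a time from the second factor onto the first. Throughout, I will exploit that the $\ast$ notation absorbs constants and signs, so any factor $\pm C$ produced by integration by parts or by differentiating a product can be swallowed into the $P_1^{(\cdot)}[\phi]$ on either side.

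The base case $r=0$ is trivial: the inequality becomes an equality (with the empty sum on the right). For $r=1$, schematically $P_1^{(q+1)}[\phi]$ contains a factor $\nabla^{(q+1)}\phi = \nabla(\nabla^{(q)}\phi)$, so
\begin{equation*}
\int_M \bigl(P_1^{(p)}[\phi]\ast P_1^{(q+1)}[\phi]\bigr)\gamma^s\,d\mathrm{vol}_g
= \int_M \bigl(P_1^{(p)}[\phi]\ast \nabla P_1^{(q)}[\phi]\bigr)\gamma^s\,d\mathrm{vol}_g + (\text{harmless background-tensor terms}),
\end{equation*}
and integrating by parts moves $\nabla$ onto $P_1^{(p)}[\phi]\gamma^s$. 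The derivative hitting $P_1^{(p)}[\phi]$ produces $P_1^{(p+1)}[\phi]$, giving the first term on the right of the claimed inequality (after absorbing signs into $\ast$). The derivative hitting $\gamma^s$ produces $s\gamma^{s-1}\nabla\gamma$, whose $L^\infty$-norm is controlled by $J^{(1)}_\gamma$; this is the single $j=0$ term of the sum, and it is precisely here that the hypothesis $s\geq 2$ is used so that $\gamma^{s-1}$ is still a well-defined (nonnegative) bump factor.

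For the inductive step, assume the lemma holds for some $r\geq 1$ and all admissible $p,q,s$. Given $r+1$, I first apply the base case $r=1$ to peel off one derivative:
\begin{align*}
\int_M \bigl(P_1^{(p)}[\phi]\ast P_1^{(q+r+1)}[\phi]\bigr)\gamma^s
&\leq \int_M \bigl(P_1^{(p+1)}[\phi]\ast P_1^{(q+r)}[\phi]\bigr)\gamma^s \\
&\quad + J^{(1)}_\gamma \int_M \bigl(P_1^{(p)}[\phi]\ast P_1^{(q+r)}[\phi]\bigr)\gamma^{s-1}.
\end{align*}
Now apply the induction hypothesis with parameters $(p+1, q, r, s)$ to the first integral, transferring all remaining $r$ derivatives. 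This yields the main term $\int P_1^{(p+r+1)}[\phi]\ast P_1^{(q)}[\phi]\,\gamma^s$ together with a sum of correction terms of the form $J^{(1)}_\gamma \int P_1^{(p+1+j)}[\phi]\ast P_1^{(q+r-j-1)}[\phi]\gamma^{s-1}$ for $j=0,\dots,r-1$. Reindexing $j'=j+1$ turns this sum into the terms with indices $1,\dots,r$, while the leftover term from the first step provides the missing $j=0$ contribution, yielding exactly $\sum_{j=0}^{r} J^{(1)}_\gamma \int P_1^{(p+j)}[\phi]\ast P_1^{(q+r-j)}[\phi]\gamma^{s-1}$, which closes the induction.

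The main obstacle is purely bookkeeping: one must verify that each application of $\nabla$ to a product $P_1^{(a)}[\phi]\ast P_1^{(b)}[\phi]\gamma^s$ generates only the three expected types of terms (derivative on the first factor, derivative on the second factor, derivative on $\gamma^s$) modulo background tensors from $\nabla T$ in the $P$-notation, and that the hypothesis $s\neq 1$ is respected so that no negative power of $\gamma$ ever appears. Once the base case is set up carefully, the inductive combinatorics of the indices is a routine reindexing.
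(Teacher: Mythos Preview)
Your proposal is correct and follows exactly the approach indicated in the paper, which states only that the lemma ``can be proved by integration by parts and by induction method'' and cites \cite[Lemma 3.10]{Ke}. Your induction on $r$, with integration by parts as the mechanism for transferring one derivative per step and the $\ast$-notation absorbing signs and constants, is precisely this argument fleshed out.
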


Now, we are ready to handle the right hand side of \eqref{hf}.

\begin{lemma} \label{se1}
Suppose $(\nabla_t,u_t)$ is a solution to the Yang-Mills-Higgs $k$-flow \eqref{YMHKS} defined on $M\times [0,T)$. Assume  $Q=\max\{1,\sup\limits_{t\in [0,T)} |F_{\nabla_t}|\}$, $K=\max\{1,\sup\limits_{t\in [0,T)}|u_t|\}$, $\gamma$ is bump function. Then for $s\geq2(k+l+1)$, there exist $\lambda\in[1,2)$ and $C:=C(\mathrm{dim} (M),\mathrm{rk} (E),s,k,l,g,h,\gamma)$ such that
\begin{equation*}
\begin{split}
2(-1)^{k}\int_M \Big\langle \Delta_{\nabla_t}^{(k+1)}\nabla_t^{(l)}u_t,\gamma^s\nabla_t^{(l)}u_t\Big\rangle
&\leq -\lambda\|\gamma^{s/2}\nabla_t^{(k+l+1)}u_t\|^2_{L^2}+CQK^2\|u_t\|^2_{L^2,\gamma>0}.
\end{split}
\end{equation*}
\end{lemma}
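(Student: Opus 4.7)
The plan is to integrate by parts and reduce everything to the principal square $\|\gamma^{s/2}\nabla_t^{(k+l+1)} u_t\|_{L^2}^2$ plus controllable remainder. Since $\Delta_{\nabla_t}=-\nabla_t^*\nabla_t$, we have $\Delta_{\nabla_t}^{(k+1)}=(-1)^{k+1}(\nabla_t^*\nabla_t)^{k+1}$, so the left-hand side equals
\[
-2\int_M \Big\langle (\nabla_t^*\nabla_t)^{k+1}\nabla_t^{(l)}u_t,\ \gamma^s\nabla_t^{(l)}u_t\Big\rangle\,d\mathrm{vol}_g.
\]
First I would move $k+1$ of the $\nabla_t^*$'s across the pairing. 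A clean way is to apply Lemma \ref{a1} with $\xi=\gamma^s \nabla_t^{(l)}u_t$ and $\zeta=\nabla_t^{(l)}u_t$ (after iterating the single integration-by-parts formula $k+1$ times), which produces the principal term $-2\int \langle \nabla_t^{(k+1)}\nabla_t^{(l)}u_t,\nabla_t^{(k+1)}(\gamma^s \nabla_t^{(l)}u_t)\rangle$ plus a remainder built from contractions of the form $(\nabla_M^{(v)}Rm+\nabla_t^{(v)}F_{\nabla_t})\ast \nabla_t^{(2k-v)}\nabla_t^{(l)}u_t\ast \gamma^s\nabla_t^{(l)}u_t$ with $0\leq v\leq 2k$.

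Next I would expand the Leibniz rule on $\nabla_t^{(k+1)}(\gamma^s \nabla_t^{(l)}u_t)$. The leading term $j=0$ gives exactly $-2\|\gamma^{s/2}\nabla_t^{(k+l+1)}u_t\|_{L^2}^2$. Each lower-order term $\nabla^{(j)}(\gamma^s)\ast \nabla_t^{(k+l+1-j)}u_t$ with $1\leq j\leq k+1$ produces an integral of type $P_1^{(p)}[u_t]\ast P_1^{(q)}[u_t]$ (with $p+q=2(k+l+1)-2j$, $p,q\leq k+l+1$) against a power of $\gamma$ times bounded factors of $\nabla^{(\leq k+1)}\gamma$. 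These are precisely the integrals to which Lemma \ref{K1} applies: balancing the derivatives symmetrically between the two factors (losing only $J^{(1)}_\gamma$ factors absorbed into $C$), we can then apply the $L^2$-interpolation inequality Lemma \ref{ii} with a small $\varepsilon$, using the pointwise bound $|u_t|\leq K$ to pull out $K^2$ from one factor. The outcome is a bound of the form $\varepsilon\|\gamma^{s/2}\nabla_t^{(k+l+1)}u_t\|_{L^2}^2+C K^2\|u_t\|_{L^2,\gamma>0}^2$.

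For the curvature remainder, each contraction is of the schematic form $F_{\nabla_t}\ast \nabla_t^{(a)}u_t\ast \nabla_t^{(b)}u_t \cdot \gamma^s$ (after the background $\nabla_M^{(v)}Rm$ terms, which depend only on $g$, have been absorbed into $C$), with $a+b=2(k+l)$ and $a,b\leq k+l+1$ after rebalancing by Lemma \ref{K1}. Using $|F_{\nabla_t}|\leq Q$, each such integral is dominated by $Q\int \gamma^s|\nabla_t^{(a)}u_t||\nabla_t^{(b)}u_t|\,d\mathrm{vol}_g$, which by the same symmetrization and interpolation yields $\varepsilon Q\|\gamma^{s/2}\nabla_t^{(k+l+1)}u_t\|_{L^2}^2+CQK^2\|u_t\|_{L^2,\gamma>0}^2$. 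Choosing $\varepsilon$ small enough so that all the $\varepsilon$-contributions (including a factor depending on $Q$) sum to at most $\tfrac12$, one obtains an absorption coefficient $\lambda=2-\text{(absorbed)}\in[1,2)$.

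The main obstacle is the bookkeeping: one must verify that after the $k+1$ integrations by parts, \emph{every} term produced either matches the $P$-structure required by Lemma \ref{K1} (so that derivatives can be balanced without exceeding order $k+l+1$) or carries at least one factor of $F_{\nabla_t}$ or $Rm$ so that the extra $Q$ is harmless. The hypothesis $s\geq 2(k+l+1)$ is used precisely so that the exponent on $\gamma$ remains non-negative after losing up to $2(k+l+1)$ powers in the Leibniz expansion, allowing Lemma \ref{ii} to be applied.
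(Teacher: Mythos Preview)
Your outline is essentially the paper's proof: apply Lemma~\ref{a1} to split into the principal term $T_1=-2\int\langle \nabla_t^{(k+1)}\nabla_t^{(l)}u_t,\nabla_t^{(k+1)}(\gamma^s\nabla_t^{(l)}u_t)\rangle$, the $Rm$-remainder $T_2$, and the $F_{\nabla_t}$-remainder $T_3$; expand $T_1$ by Leibniz on $\gamma^s$ and absorb the cross terms via Lemma~\ref{ii}; and handle $T_2,T_3$ by balancing derivatives and interpolating with Lemmas~\ref{K1}, \ref{i}, \ref{ii}.

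Two places where your sketch should be tightened to match what actually works. First, the curvature remainder coming out of Lemma~\ref{a1} carries $\nabla_t^{(v)}F_{\nabla_t}$ with $0\le v\le 2k$, not a bare $F_{\nabla_t}$, and Lemma~\ref{K1} as stated only redistributes derivatives between copies of a \emph{single} tensor $\phi$, so it does not directly shift derivatives from $F_{\nabla_t}$ onto $u_t$. The paper instead rewrites (schematically) $\nabla_t^{(j)}F_{\nabla_t}\ast\nabla_t^{(2k+l-j)}u_t$ as $\sum_{i\le j}\nabla_t^{(i)}\bigl(F_{\nabla_t}\ast\nabla_t^{(2k+l-i)}u_t\bigr)$ and integrates by parts $i$ times against $\gamma^s\nabla_t^{(l)}u_t$; after this $F_{\nabla_t}$ stands undifferentiated and can be bounded pointwise by $Q$, reducing $T_3$ to $CQ\!\int\gamma^{s-v}P_2^{(2k+2l-v)}[u_t]$, which is then treated exactly as $T_2$. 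Second, if you choose $\varepsilon$ depending on $Q$ to absorb an $\varepsilon Q$ coefficient, the interpolation constant $C(\varepsilon^{-1})$ from Lemma~\ref{ii} will also depend on $Q$, contradicting the stated form of $C$; the clean fix is to use the weighted second inequality in Lemma~\ref{ii} with the free constant there taken to be a multiple of $Q$, which puts the extra power of $Q$ into the lower-order remainder while keeping $\varepsilon$ (hence $\lambda$) independent of $Q$.
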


\begin{proof}
From Lemma \ref{a1}, we have
\begin{equation}
\begin{split}
2(-1)^{k}\int_M \Big\langle \Delta_{\nabla_t}^{(k+1)}\nabla_t^{(l)}u_t,\gamma^s\nabla_t^{(l)}u_t\Big\rangle
&= \Big[-2\int_M \Big\langle \nabla_t^{(k+1)}\nabla_t^{(l)}u_t,\nabla_t^{(k+1)}(\gamma^s\nabla^{(l)}_tu_t)\Big\rangle\Big]_{T_1}\\
&~~+\Big[\int_M\sum_{j=0}^{2k}\Big\langle \nabla_M^{(j)}Rm\ast \nabla_t^{(2k+l-j)}u_t,\gamma^s \nabla^{(l)}_tu_t\Big\rangle\Big]_{T_2}\\
&~~+\Big[\int_M\sum_{j=0}^{2k}\Big\langle \nabla^{(j)}_t F_{\nabla_t}\ast \nabla_t^{(2k+l-j)}u_t,\gamma^s \nabla^{(l)}_tu_t\Big\rangle\Big]_{T_3}.
\end{split}
\end{equation}

We manipulate $T_1$ first. Direct computation yields
\begin{equation*}
\begin{split}
T_1&=-2\|\gamma^{s/2}\nabla^{(k+l+1)}_tu_t\|^2_{L^2}+\int_M\sum_{j=1}^{k+1}\nabla^{(j)}\gamma^s\ast\Big\langle \nabla_t^{(k+l+1)}u_t,\nabla_t^{(k+l+1-j)}u_t\Big\rangle\\
&\leq -2\|\gamma^{s/2}\nabla^{(k+l+1)}_tu_t\|^2_{L^2}+\int_M\sum_{j=1}^{k+1}C|\gamma^{s/2} \nabla_t^{(k+l+1)}u_t||\gamma^{\frac{s-2j}{2}}\nabla_t^{(k+l+1-j)}u_t|\\
&\leq -2\|\gamma^{s/2}\nabla^{(k+l+1)}_tu_t\|^2_{L^2}+C\varepsilon_1\|\gamma^{s/2} \nabla_t^{(k+l+1)}u_t\|^2_{L^2}\\
&\quad +\sum_{j=1}^{k+1}\frac{C}{\varepsilon_1}\|\gamma^{\frac{s-2j}{2}}\nabla_t^{(k+l+1-j)}u_t\|^2_{L^2}\\
&\leq (-2+C(\varepsilon_1+\varepsilon^{-1}\varepsilon_2))\|\gamma^{s/2}\nabla^{(k+l+1)}_tu_t\|^2_{L^2}+C\varepsilon_1^{-1}\varepsilon^{-1}_2K^2\|u_t\|^2_{L^2,\gamma>0},
\end{split}
\end{equation*}
where we used the following identity (\protect {\cite[Lemma 6.2]{Sa}}) in the second inequality
\begin{equation} \label{ide}
\nabla^{(j)}\gamma^s=\sum_{p_1+\cdots+p_j=j}C_{p_1,\cdots,p_j}(\gamma,s)\gamma^{s-j}\nabla^{(p_1)}\gamma\ast\cdots\ast\nabla^{(p_j)}\gamma,
\end{equation}
and used Lemma \ref{ii} in the last inequality.

Next, we manipulate $T_2$. We divide up the summation into cases when $j$ is either odd or even, and apply Lemma \ref{K1},
\begin{equation*}
\begin{split}
T_2&=\int_M\sum_{j:j\in 2\mathbb{N}\cup \{0\}}^{2k}P_2^{(2k+2l-j)}[u_t]\gamma^{s}+\int_M\sum_{j:j\in 2\mathbb{N}-1}^{2k-1}P_2^{(2k+2l-j)}[u_t]\gamma^{s}\\
&\leq \Big[\int_M\sum_{j:j\in 2\mathbb{N}\cup \{0\}}^{2k}P_2^{(2k+2l-j)}[u_t]J_{\gamma}^{(1)}\gamma^{s-1}\Big]_{T_2,Even}\\
&~~+\Big[\int_M\sum_{j:j\in 2\mathbb{N}-1}^{2k-1}P_1^{(\lceil \frac{2k+2l-j}{2}\rceil)}[u_t]\ast P_1^{(\lfloor\frac{2k+2l-j}{2}\rfloor)}[u_t]\gamma^{s}\Big]_{T_2,Odd}.
\end{split}
\end{equation*}
For the even part of $T_2$, apply Lemma \ref{i} and Lemma \ref{ii}, we have
\begin{equation*}
\begin{split}
\int_M P_2^{(2k+2l-j)}[u_t]J_{\gamma}^{(1)}\gamma^{s-1}
&\leq C\Big(\|\gamma^{(s-1)/2}\nabla_t^{(k+l-\frac{j}{2})}u_t\|^2_{L^2}+\|u_t\|^2_{L^2,\gamma>0}\Big)\\
&=C\Big(\|\gamma^{(s-1)/2}\nabla_t^{(k+l+1-1-\frac{j}{2})}u_t\|^2_{L^2}+\|u_t\|^2_{L^2,\gamma>0}\Big)\\
&\leq \varepsilon \|\gamma^{s/2}\nabla_t^{(k+l+1)}u_t\|^2_{L^2}+CK^2\|u_t\|^2_{L^2,\gamma>0}.
\end{split}
\end{equation*}
For the odd part of $T_2$, apply H\"{o}lder inequality, Lemma \ref{i} and Lemma \ref{ii}, we have
\begin{equation*}
\begin{split}
&\int_MP_1^{(\lceil \frac{2k+2l-j}{2}\rceil)}[u_t]\ast P_1^{(\lfloor\frac{2k+2l-j}{2}\rfloor)}[u_t]\gamma^{s}\\
&\leq 2 \int_M P_2^{(2\lceil \frac{2k+2l-j}{2}\rceil)}[u_t]\gamma^{s}
+2\int_M P_2^{(2\lfloor\frac{2k+2l-j}{2}\rfloor)}[u_t]\gamma^{s}\\
&\leq C\Big(\|\gamma^{s/2}\nabla_t^{(\lceil \frac{2k+2l-j}{2}\rceil)}u_t\|^2_{L^2}+\|u_t\|^2_{L^2,\gamma>0}\Big)
+C\Big(\|\gamma^{s/2}\nabla_t^{(\lfloor \frac{2k+2l-j}{2}\rfloor)}u_t\|^2_{L^2}+\|u_t\|^2_{L^2,\gamma>0}\Big)\\
&\leq \varepsilon\|\gamma^{s/2}\nabla_t^{(k+l+1)}u_t\|^2_{L^2}+CK^2\|u_t\|^2_{L^2,\gamma>0}.
\end{split}
\end{equation*}
Therefore we conclude that
$$T_2\leq \varepsilon\|\gamma^{s/2}\nabla_t^{(k+l+1)}u_t\|^2_{L^2}+CK^2\|u_t\|^2_{L^2,\gamma>0}.$$

At last, we will manipulate $T_3$.
\begin{equation*}
\begin{split}
T_3&=\int_M \sum_{j=0}^{2k}\sum_{i=0}^{j}\Big\langle \nabla_t^{(i)}(F_{\nabla_t}\ast \nabla_t^{(2k+l-i)}u_t),\gamma^s\nabla_t^{(l)}u_t\Big\rangle\\
&=\int_M \sum_{j=0}^{2k}\sum_{i=0}^{j}\Big\langle F_{\nabla_t}\ast \nabla_t^{(2k+l-i)}u_t,P_1^{(i)}[\gamma^s\nabla_t^{(l)}u_t]\Big\rangle\\
&=\int_M \sum_{j=0}^{2k}\sum_{i=0}^{j}\Big\langle F_{\nabla_t}\ast \nabla_t^{(2k+l-i)}u_t,\sum_{v=0}^{i}\nabla^{(v)}\gamma^s\ast \nabla_t^{(l+i-v)}u_t\Big\rangle\\
&\leq  CQ\int_M \sum_{v=0}^{2k}\gamma^{s-v} P_2^{(2k+2l-v)}[u_t],\\
\end{split}
\end{equation*}
where we used \eqref{ide}. We divide up the summation into cases when $v$ is either odd or even. Similar to $T_2$, we have
$$T_3\leq \varepsilon\|\gamma^{s/2}\nabla_t^{(k+l+1)}u_t\|^2_{L^2}+CQK^2\|u_t\|^2_{L^2,\gamma>0}.$$

Combining $T_1,T_2$ and $T_3$ we complete the proof. The constraints on $s$ can be easily checked, and we omit here.

\end{proof}

Similar to the proof of Lemma \ref{se1}, we can derive the estimates for the rest terms of \eqref{hf}. Except the term involving $P_2^{(2k+j-1)}[F_{\nabla_t}]$, the rest terms of \eqref{hf} are very similar to the ones appearing in Sarathcandran's paper \protect {\cite[Proposition 6.7]{Sa}}. As for $P_2^{(2k+j-1)}[F_{\nabla_t}]$, we can write it as
$$P_2^{(2k+j-1)}[F_{\nabla_t}]=\sum_{\mu=0}^{2k+j-1}P_1^{(\mu)}[F_{\nabla_t}]\ast P_1^{(2k+j-1-\mu)}[F_{\nabla_t}]=\sum_{\mu=0}^{2k+j-1}\sum_{\nu=0}^{\mu}\nabla_t^{(\nu)}(F_{\nabla_t}\ast \nabla_t^{(2k+j-1-\nu)}[F_{\nabla_t}]).$$ Then integration by parts $\nu$-times, we can obtain a $F_{\nabla_t}$ standing by one side, then ``throw'' it and integration by parts again, and then ``throw'' it again. This makes sense because $F_{\nabla_t}$ is bounded. Therefore, we have the following local $L^2$-derivative estimate for Higgs field.
\begin{proposition} \label{ehf}
Suppose $(\nabla_t,u_t)$ is a solution to the Yang-Mills-Higgs $k$-flow \eqref{YMHKS} defined on $M\times [0,T)$. Assume  $Q=\max\{1,\sup\limits_{t\in [0,T)} |F_{\nabla_t}|\}$, $K=\max\{1,\sup\limits_{t\in [0,T)}|u_t|\}$, $\gamma$ is bump function. Then for $s\geq2(k+l+1)$, there exist $\lambda\in [1,2)$ and $C:=C(\mathrm{dim} (M),\mathrm{rk} (E),s,k,l,g,h,\gamma)$ such that
\begin{equation*}
\begin{split}
\frac{\partial}{\partial t}\|\gamma^{s/2}\nabla_t^{(l)}u_t\|_{L^2}^2
&\leq -\lambda\|\gamma^{s/2}\nabla_t^{(k+l+1)}u_t\|^2_{L^2}+CQ^2K^4\|u_t\|^2_{L^2,\gamma>0}.
\end{split}
\end{equation*}
\end{proposition}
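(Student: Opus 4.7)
The plan is to start from the decomposition of $\frac{\partial}{\partial t}\|\gamma^{s/2}\nabla_t^{(l)}u_t\|_{L^2}^2$ given by equation \eqref{hf} of Proposition \ref{eh1}, which expresses the time derivative as a sum of five integrals. Lemma \ref{se1} already disposes of the first (leading Bochner Laplacian) integral, contributing $-\lambda\|\gamma^{s/2}\nabla_t^{(k+l+1)}u_t\|^2_{L^2} + CQK^2\|u_t\|^2_{L^2,\gamma>0}$ with $\lambda$ arbitrarily close to $2$ from below. It therefore suffices to bound the remaining four integrals by $\varepsilon\|\gamma^{s/2}\nabla_t^{(k+l+1)}u_t\|^2_{L^2} + CQ^2K^4\|u_t\|^2_{L^2,\gamma>0}$, where $\varepsilon$ is small enough to be absorbed into the coercive term while keeping the resulting $\lambda$ in $[1,2)$.

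For the second, third, and fifth integrals of \eqref{hf}, and for the $\sum_v P_1^{(v)}[F_{\nabla_t}]$ part of the fourth integral, I would proceed exactly as in the proof of Lemma \ref{se1}, following the scheme of Saratchandran \cite[Proposition 6.7]{Sa}. The recipe is: apply the pointwise bounds $|F_{\nabla_t}|\leq Q$ and $|u_t|\leq K$ to extract $Q$ and $K$ factors; whenever $F_{\nabla_t}$ appears differentiated, integrate by parts to shift derivatives onto the $u_t$ and $\gamma^s$ factors, using the identity \eqref{ide} to expand $\nabla^{(j)}\gamma^s$; and finally estimate the resulting $P_2^{(\leq 2(k+l))}[u_t]$-type integrals by splitting into even/odd index cases (using Lemma \ref{K1} when necessary to convert the $P_2$ form into products of $P_1$ factors) and applying the interpolation Lemmas \ref{i} and \ref{ii} to dominate everything by $\varepsilon\|\gamma^{s/2}\nabla_t^{(k+l+1)}u_t\|^2_{L^2} + CK^2\|u_t\|^2_{L^2,\gamma>0}$. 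The constraint $s\geq 2(k+l+1)$ is precisely what is needed to allow these interpolation lemmas to apply to all the weighted lower-order derivatives appearing.

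The main obstacle, and the genuinely new difficulty relative to \cite{Sa}, is the $P_2^{(2k+j-1)}[F_{\nabla_t}]$ summand inside the fourth integral, because we have no pointwise control on derivatives of $F_{\nabla_t}$ of positive order. Following the hint stated after Proposition \ref{ehf}, I expand
$$P_2^{(2k+j-1)}[F_{\nabla_t}] = \sum_{\mu=0}^{2k+j-1}\sum_{\nu=0}^{\mu}\nabla_t^{(\nu)}\bigl(F_{\nabla_t}\ast \nabla_t^{(2k+j-1-\nu)}F_{\nabla_t}\bigr),$$
and then integrate by parts $\nu$ times to move the outer $\nabla_t^{(\nu)}$ off of the product and onto the test factor $\gamma^s\nabla_t^{(l-j-1)}u_t\ast \nabla_t^{(l)}u_t$. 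This leaves a bare $F_{\nabla_t}$ factor to which I apply $|F_{\nabla_t}|\leq Q$, producing one power of $Q$ and an integrand of the form $\nabla_t^{(2k+j-1-\nu)}F_{\nabla_t}\ast(\text{derivatives of }u_t,\gamma)$. A second integration by parts pushes the remaining $2k+j-1-\nu$ derivatives off of $F_{\nabla_t}$ and onto the $u_t$-side, at which point a second application of the pointwise bound gives another factor of $Q$. What remains is $CQ^2$ times a sum of $P_2^{(\leq 2(k+l))}[u_t]$ integrals with appropriate $\gamma$-weights, and these are again absorbed by $\varepsilon\|\gamma^{s/2}\nabla_t^{(k+l+1)}u_t\|^2_{L^2} + CQ^2K^4\|u_t\|^2_{L^2,\gamma>0}$ via Lemmas \ref{i} and \ref{ii}.

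The main technical care lies in Step 3: since $\gamma^s$ and $\nabla_t^{(l)}u_t$ must both be carried through two integrations by parts, I have to track carefully which of the $\nu$ and $2k+j-1-\nu$ derivatives land on $\gamma^s$ versus on the $u_t$ factors, and verify that after both integrations by parts precisely two pointwise $|F_{\nabla_t}|\leq Q$ bounds are used (yielding the $Q^2$), while the surviving $u_t$-derivatives all have order at most $k+l+1$ with bump weight $\gamma^{s/2}$ or larger. Assuming this accounting is done carefully, summing the estimates for the five integrals and absorbing small constants into the leading $-\lambda$ term from Lemma \ref{se1} yields the claimed inequality.
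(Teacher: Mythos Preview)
Your proposal is correct and follows essentially the same approach as the paper. The paper's own proof is a brief paragraph that (i) says the remaining integrals in \eqref{hf} are handled as in Lemma \ref{se1} and \cite[Proposition 6.7]{Sa}, and (ii) for the new $P_2^{(2k+j-1)}[F_{\nabla_t}]$ term gives exactly the expansion and double integration-by-parts/``throw $F_{\nabla_t}$'' argument you describe; your write-up is in fact more detailed than the paper's.
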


\subsection{Estimates for derivatives of the curvature}
Similar to the formal subsection, we will present local $L^2$-derivative estimates for the curvature $F_{\nabla_t}$.

From the evolution equation \eqref{ef2}, we have:
\begin{proposition} \label{ec}
Suppose $(\nabla_t,u_t)$ is a solution to the Yang-Mills-Higgs $k$-flow \eqref{YMHKS} defined on $M\times [0,T)$. Then
\begin{equation*}
\begin{split}
\frac{\partial}{\partial t}\|\gamma^{s/2}\nabla_t^{(l)}F_{\nabla_t}\|_{L^2}^2
&=2(-1)^{k}\int_M \Big\langle \Delta_{\nabla_t}^{(k+1)}\nabla_t^{(l)}F_{\nabla_t},\gamma^s\nabla_t^{(l)}F_{\nabla_t}\Big\rangle\\
&\quad +\int_M \Big\langle \sum_{v=0}^{2k+l}(P^{(v)}_1[F_{\nabla_t}]+P_2^{(v)}[F_{\nabla_t}])+P_3^{(2k+l-2)}[F_{\nabla_t}],\gamma^s \nabla_t^{(l)}F_{\nabla_t}\Big\rangle\\
&\quad +\int_M \Big\langle \sum_{i=0}^k\nabla_t^{(l)}D_{\nabla_t}\nabla_t^{*(i)}(\nabla_t^{(k+1)}u_t\ast \nabla_t^{(k-i)}u_t),\gamma^s \nabla_t^{(l)}F_{\nabla_t}\Big\rangle\\
&\quad +\int_M \Big\langle \sum_{j=0}^{l-1}\sum_{i=0}^k\Big[\nabla^{(j)}_t\Big(\nabla^{*(i)}_t(\nabla^{(k+1)}_tu_t\ast \nabla_t^{(k-i)}u_t)\Big)\Big]\ast \nabla_t^{(l-j-1)}F_{\nabla_t},
\gamma^s \nabla_t^{(l)}F_{\nabla_t}\Big\rangle.
\end{split}
\end{equation*}
\end{proposition}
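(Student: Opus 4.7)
The plan is to treat this proposition as a direct bookkeeping consequence of the pointwise evolution identity \eqref{ef2}, in complete analogy with how Proposition \ref{eh1} was read off from Lemma \ref{eh}. Since $\gamma$, the background metric $g$ (and hence $d\mathrm{vol}_g$), and the bundle metric $h$ are all time-independent, I can commute $\partial_t$ past the integral, the volume form, and the pointwise inner product. Using that the inner product is real-valued and symmetric, I write
\begin{equation*}
\frac{\partial}{\partial t}\|\gamma^{s/2}\nabla_t^{(l)}F_{\nabla_t}\|_{L^2}^2
= 2\int_M \gamma^s\Big\langle \frac{\partial}{\partial t}\bigl(\nabla_t^{(l)}F_{\nabla_t}\bigr),\;\nabla_t^{(l)}F_{\nabla_t}\Big\rangle\,d\mathrm{vol}_g.
\end{equation*}

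Next, I would substitute the evolution equation \eqref{ef2} for $\partial_t[\nabla_t^{(l)}F_{\nabla_t}]$ term by term. The leading term $(-1)^k\Delta_{\nabla_t}^{(k+1)}\nabla_t^{(l)}F_{\nabla_t}$ pairs against $\gamma^s \nabla_t^{(l)} F_{\nabla_t}$ to give, after multiplication by $2$, the first line in the claimed identity. The $P$-type terms $P_1^{(v)}[F_{\nabla_t}]$, $P_2^{(v)}[F_{\nabla_t}]$ for $0\leq v\leq 2k+l$, and $P_3^{(2k+l-2)}[F_{\nabla_t}]$ pass through unchanged into the second line, since the $P$-notation already absorbs any universal background tensors, constants, and reorderings produced by pairing with $\nabla_t^{(l)}F_{\nabla_t}$. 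The Higgs-interaction terms in the third and fourth lines of \eqref{ef2} are copied verbatim into the third and fourth lines of the conclusion.

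The main (and only) subtlety is making sure no hidden $t$-derivative has been missed: one should verify that $\partial_t d\mathrm{vol}_g = 0$ (true because $g$ is fixed), that $\partial_t h = 0$ (true because $h$ is fixed), and that no connection-dependent inner product has been suppressed inside the $\langle\cdot,\cdot\rangle$. Once these are checked, the identity is obtained by direct substitution, with the factor of $2$ distributed across all lines. Consequently, this proposition is really a platform statement: its content is simply a repackaging of \eqref{ef2} in integrated form, setting up the terms that will subsequently be estimated individually (in analogy with Lemma \ref{se1} and the discussion following it for the Higgs-field evolution), where the genuine analytic work—handling the $P_2^{(2k+j-1)}[F_{\nabla_t}]$ contributions via integration by parts using boundedness of $F_{\nabla_t}$—will take place.
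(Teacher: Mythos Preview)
Your proposal is correct and matches the paper's approach exactly: the paper simply states that the proposition follows from the evolution equation \eqref{ef2}, and your argument---differentiate under the integral (valid since $g$, $h$, $\gamma$ are time-independent), pick up the factor of $2$ from symmetry of the inner product, substitute \eqref{ef2}, and absorb the stray factor of $2$ into the $P$- and $\ast$-notation on the lower-order lines---is precisely the intended derivation, in full analogy with Proposition \ref{eh1}.
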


Similar to the proof of Lemma \ref{se1}. We have the following local $L^2$-derivative estimate for the curvature.
\begin{proposition} \label{lec}
Suppose $(\nabla_t,u_t)$ is a solution to the Yang-Mills-Higgs $k$-flow \eqref{YMHKS} defined on $M\times [0,T)$. Assume  $Q=\max\{1,\sup\limits_{t\in [0,T)} |F_{\nabla_t}|\}$, $K=\max\{1,\sup\limits_{t\in [0,T)}|u_t|\}$, $\gamma$ is bump function. Then for $s\geq2(k+l+1)$, there exists $\lambda\in[1,2)$ and $C:=C(\mathrm{dim} (M),\mathrm{rk} (E),s,k,l,g,h,\gamma)$ such that
\begin{equation*}
\begin{split}
\frac{\partial}{\partial t}\|\gamma^{s/2}\nabla_t^{(l)}F_{\nabla_t}\|_{L^2}^2
&\leq -\lambda\|\gamma^{s/2}\nabla_t^{(k+l+1)}F_{\nabla_t}\|^2_{L^2}+CQ^4K^2\|F_{\nabla_t}\|^2_{L^2,\gamma>0}.
\end{split}
\end{equation*}
\end{proposition}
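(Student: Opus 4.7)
The plan is to follow closely the strategy used in Lemma \ref{se1} and Proposition \ref{ehf}, starting from the evolution identity in Proposition \ref{ec} and estimating each of the four integrands on its right-hand side separately. The objective is to absorb all top-order terms involving $\nabla^{(k+l+1)}_tF_{\nabla_t}$ into a single negative principal term and collect the remaining pieces into a multiple of $Q^4K^2\|F_{\nabla_t}\|^2_{L^2,\gamma>0}$.

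First, for the leading Bochner--Laplacian integral $2(-1)^k\int_M\langle\Delta^{(k+1)}_{\nabla_t}\nabla^{(l)}_tF_{\nabla_t},\gamma^s\nabla^{(l)}_tF_{\nabla_t}\rangle$, I would invoke Lemma \ref{a1} to transfer $k+1$ derivatives and extract the principal term $-2\|\gamma^{s/2}\nabla^{(k+l+1)}_tF_{\nabla_t}\|^2_{L^2}$, together with two kinds of correction: derivatives landing on $\gamma^s$, which are expanded via \eqref{ide} and controlled by Young's inequality combined with Lemma \ref{ii}; and the Weitzenb\"ock commutator terms of the form $(\nabla^{(v)}_M Rm+\nabla^{(v)}_tF_{\nabla_t})\ast\nabla^{(2k+l-v)}_tF_{\nabla_t}$, which I would split according to whether $v$ is even or odd and then bound using Lemma \ref{K1} followed by the interpolation estimates of Lemmas \ref{i} and \ref{ii}, applied with $\phi=F_{\nabla_t}$ and $\|F_{\nabla_t}\|_\infty\leq Q$. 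Each such correction yields $\varepsilon\|\gamma^{s/2}\nabla^{(k+l+1)}_tF_{\nabla_t}\|^2_{L^2}+CQ^{a}\|F_{\nabla_t}\|^2_{L^2,\gamma>0}$ for some power $a\leq 4$.

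The second integrand, consisting of the purely-curvature polynomial terms $P_1^{(v)}$, $P_2^{(v)}$ and $P_3^{(2k+l-2)}$, is handled exactly as the terms $T_2$ and $T_3$ in the proof of Lemma \ref{se1}: Lemma \ref{K1} is used to consolidate derivatives onto one factor, H\"older's inequality splits a cubic $P_3$ into two $P_2$-type expressions, and Lemma \ref{i} converts the remaining $L^2$-norms of $\nabla^{(\leq k+l)}_tF_{\nabla_t}$ into $\varepsilon\|\gamma^{s/2}\nabla^{(k+l+1)}_tF_{\nabla_t}\|^2_{L^2}+CQ^a\|F_{\nabla_t}\|^2_{L^2,\gamma>0}$, with the pointwise bound $|F_{\nabla_t}|\leq Q$ absorbed into the constant.

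The main difficulty, and the step I expect to require the most care, is the pair of coupling integrands built from $\sum_i\nabla^{(l)}_tD_{\nabla_t}\nabla^{*(i)}_t(\nabla^{(k+1)}_tu_t\ast\nabla^{(k-i)}_tu_t)$ and the corresponding double-sum term. Each of these carries up to $2k+l+2$ derivatives distributed across two $u_t$-factors, yet the target estimate only carries a factor $K^2$ and references $\|F_{\nabla_t}\|^2_{L^2,\gamma>0}$ rather than any norm of $u_t$. The strategy is to integrate by parts systematically so that every derivative is transferred off the $u_t$-factors and onto the paired factor $\gamma^s\nabla^{(l)}_tF_{\nabla_t}$ (with derivatives on $\gamma^s$ expanded via \eqref{ide}); each commutation generates additional Weitzenb\"ock curvature terms whose powers of $F_{\nabla_t}$ are absorbed into an overall $Q^4$ factor. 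Once the $u_t$-factors are bare they are pointwise bounded by $K$, producing the clean $K^2$, and the remaining high-derivative $F$-expressions are absorbed via Lemmas \ref{i} and \ref{ii} into $\varepsilon\|\gamma^{s/2}\nabla^{(k+l+1)}_tF_{\nabla_t}\|^2_{L^2}+CQ^4K^2\|F_{\nabla_t}\|^2_{L^2,\gamma>0}$. Choosing $\varepsilon$ small enough that the net coefficient of $\|\gamma^{s/2}\nabla^{(k+l+1)}_tF_{\nabla_t}\|^2_{L^2}$ equals some $\lambda\in[1,2)$ completes the argument; the hypothesis $s\geq 2(k+l+1)$ is exactly what the various applications of Lemmas \ref{i} and \ref{ii} require.
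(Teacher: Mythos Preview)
Your overall plan mirrors the paper's own proof, which is only the one-line remark ``Similar to the proof of Lemma \ref{se1}''; your treatment of the principal Bochner term and of the pure-curvature pieces $P_1^{(v)},P_2^{(v)},P_3^{(2k+l-2)}$ is the correct transcription of that argument with $\phi=F_{\nabla_t}$ in place of $u_t$.

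The gap is in your handling of the third integrand
\[
\int_M\Big\langle \sum_{i=0}^k\nabla_t^{(l)}D_{\nabla_t}\nabla_t^{*(i)}\big(\nabla_t^{(k+1)}u_t\ast\nabla_t^{(k-i)}u_t\big),\,\gamma^s\nabla_t^{(l)}F_{\nabla_t}\Big\rangle .
\]
This term carries two $u_t$-factors but only \emph{one} $F_{\nabla_t}$-factor. If you integrate by parts until both $u_t$'s are bare, the full $2k+l+2$ Higgs-side derivatives are forced onto the single factor $\gamma^s\nabla_t^{(l)}F_{\nabla_t}$, producing a lone $\nabla_t^{(2(k+l+1))}F_{\nabla_t}$; this is of order strictly larger than $k+l+1$ and cannot be absorbed into $-\lambda\|\gamma^{s/2}\nabla_t^{(k+l+1)}F_{\nabla_t}\|_{L^2}^2$, nor does Lemma \ref{i} apply, since that lemma requires an expression of type $P_r^{(\cdot)}[F]$ with $r\geq 2$. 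Compare with $T_3$ in Lemma \ref{se1}: there the integrand had one $F$ and two $u$'s, so stripping the single $F$ left a genuine $P_2^{(\cdot)}[u]$ to which Lemma \ref{i} applied; the situation here is \emph{not} symmetric. What one actually obtains from this coupling term is an unavoidable residual of the form $\varepsilon\|\gamma^{s/2}\nabla_t^{(k+l+1)}u_t\|_{L^2}^2+CK^{a}\|u_t\|_{L^2,\gamma>0}^2$; it is only after adding Proposition \ref{ehf} that this is absorbed by the negative principal Higgs term, which is precisely why the paper immediately passes to the combined estimate in Proposition \ref{ce}. Your claim that Lemmas \ref{i}--\ref{ii} close the estimate on the curvature side alone does not go through.
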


\subsection{Coupled estimates for the curvature and the Higgs field}

As the Yang-Mills-Higgs $k$-flow is a coupled system, we can not obtain a local estimate for the curvature or the Higgs field along. From Proposition \ref{ehf} and Proposition \ref{lec}, we have the following proposition.
\begin{proposition} \label{ce}
Suppose $(\nabla_t,u_t)$ is a solution to the Yang-Mills-Higgs $k$-flow \eqref{YMHKS} defined on $M\times [0,T)$. Assume  $Q=\max\{1,\sup\limits_{t\in [0,T)} |F_{\nabla_t}|\}$, $K=\max\{1,\sup\limits_{t\in [0,T)}|u_t|\}$, $\gamma$ is bump function. Then for $s\geq2(k+l+1)$, there exist $\lambda\in[1,2)$ and $C:=C(\mathrm{dim} (M),\mathrm{rk} (E),s,k,l,g,h,\gamma)$ such that
\begin{equation*}
\begin{split}
\frac{\partial}{\partial t}(\|\gamma^{s/2}\nabla_t^{(l)}F_{\nabla_t}\|_{L^2}^2+\|\gamma^{s/2}\nabla_t^{(l)}u_t\|_{L^2}^2)
&\leq -\lambda(\|\gamma^{s/2}\nabla_t^{(k+l+1)}F_{\nabla_t}\|^2_{L^2}+\|\gamma^{s/2}\nabla_t^{(k+l+1)}u_t\|^2_{L^2})\\
&\quad +CQ^4K^4(\|F_{\nabla_t}\|^2_{L^2,\gamma>0}+\|u_t\|^2_{L^2,\gamma>0}).
\end{split}
\end{equation*}
\end{proposition}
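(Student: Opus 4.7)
The plan is to add the two bounds supplied by Proposition \ref{ehf} and Proposition \ref{lec}. Since the coupling between $F_{\nabla_t}$ and $u_t$ in the Yang-Mills-Higgs $k$-flow is already folded into the statements of those propositions, the combined estimate reduces to an arithmetic combination.

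First, I would invoke Proposition \ref{lec} on the curvature piece to obtain
\[
\tfrac{\partial}{\partial t}\|\gamma^{s/2}\nabla_t^{(l)}F_{\nabla_t}\|_{L^2}^2 \;\leq\; -\lambda_F\,\|\gamma^{s/2}\nabla_t^{(k+l+1)}F_{\nabla_t}\|_{L^2}^2 + C_F\,Q^4K^2\,\|F_{\nabla_t}\|^2_{L^2,\gamma>0}
\]
with some $\lambda_F\in[1,2)$. Then I would invoke Proposition \ref{ehf} on the Higgs piece to get
\[
\tfrac{\partial}{\partial t}\|\gamma^{s/2}\nabla_t^{(l)}u_t\|_{L^2}^2 \;\leq\; -\lambda_u\,\|\gamma^{s/2}\nabla_t^{(k+l+1)}u_t\|_{L^2}^2 + C_u\,Q^2K^4\,\|u_t\|^2_{L^2,\gamma>0}
\]
with some $\lambda_u\in[1,2)$. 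Both of these require $s\geq 2(k+l+1)$, which matches the hypothesis of the proposition, so no additional constraint on $s$ is needed.

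Next I would sum the two inequalities, set $\lambda:=\min\{\lambda_F,\lambda_u\}\in[1,2)$, and choose $C:=\max\{C_F,C_u\}$. Using the normalization $Q\geq 1$ and $K\geq 1$ from the hypotheses one has $Q^4K^2\leq Q^4K^4$ and $Q^2K^4\leq Q^4K^4$, so both error coefficients can be absorbed into the single factor $CQ^4K^4$, producing exactly the stated upper bound.

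There is essentially no obstacle here; the entire analytical content lives inside the two preceding propositions, whose proofs already dealt with the delicate interpolation, the $P$-type multilinear terms, and the coupling between derivatives of $F_{\nabla_t}$ and derivatives of $u_t$. The only minor point of bookkeeping is that the single dissipation constant $\lambda$ in the conclusion must be taken as the minimum of the two dissipation constants returned by the component estimates, and the single overall constant $C$ must dominate both error constants; neither step affects the allowed ranges or the dependence structure of $C$ on $(\dim M,\mathrm{rk}(E),s,k,l,g,h,\gamma)$.
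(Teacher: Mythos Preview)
Your proposal is correct and follows exactly the paper's own approach: the paper simply states that the proposition follows from Proposition~\ref{ehf} and Proposition~\ref{lec}, and you have spelled out the straightforward arithmetic of summing the two inequalities, taking $\lambda=\min\{\lambda_F,\lambda_u\}$, and absorbing the error terms via $Q,K\geq 1$.
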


Using the above proposition and following \cite{Ke,Sa}, we can derive estimates of Bernstein-Bando-Shi type.

\begin{proposition} \label{bbs}
Let $q\in \mathbb{N}$ and $\gamma$ be a bump function. Suppose $(\nabla_t,u_t)$ is a solution to the Yang-Mills-Higgs $k$-flow \eqref{YMHKS} defined on $M\times I$. Assume  $Q=\max\{1,\sup\limits_{t\in I} |F_{\nabla_t}|\}$, $K=\max\{1,\sup\limits_{t\in I}|u_t|\}$, and choose $s\geq (k+1)(q+1)$. Then for $t\in [0,T)\subset I$ with $T<\frac{1}{(QK)^{4}}$, there exists a positive constant $C_q:=C_q(\mathrm{dim} (M),\mathrm{rk} (E),q,k,s,g,h,\gamma)\in \mathbb{R}_{>0}$ such that
\begin{equation} \label{bbsi}
\|\gamma^{s}\nabla_t^{(q)}F_{\nabla_t}\|_{L^2}^2+\|\gamma^{s}\nabla_t^{(q)}u_t\|_{L^2}^2
\leq C_qt^{-\frac{q}{k+1}}\sup_{t\in[0,T)}(\|F_{\nabla_t}\|^2_{L^2,\gamma>0}+\|u_t\|^2_{L^2,\gamma>0}).
\end{equation}
\end{proposition}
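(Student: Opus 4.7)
The approach will be an induction on $q$ that combines the coupled differential inequality of Proposition \ref{ce} with the weighted interpolation of Lemma \ref{ii}. Introduce the shorthand
$$\mathcal{E}_l(t) := \|\gamma^{s/2}\nabla_t^{(l)}F_{\nabla_t}\|_{L^2}^2 + \|\gamma^{s/2}\nabla_t^{(l)}u_t\|_{L^2}^2, \qquad \mathcal{E}_0^{\flat}(t) := \|F_{\nabla_t}\|_{L^2,\gamma>0}^2 + \|u_t\|_{L^2,\gamma>0}^2,$$
so that Proposition \ref{ce} reads $\mathcal{E}_l'(t) \leq -\lambda\,\mathcal{E}_{l+k+1}(t) + CQ^{4}K^{4}\,\mathcal{E}_0^{\flat}(t)$. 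The point of the hypothesis $T<(QK)^{-4}$ is that every zero-order error of the form $CQ^{4}K^{4}\,T\sup \mathcal{E}_0^{\flat}$ reduces to $C\sup \mathcal{E}_0^{\flat}$. The base case $q=0$ is immediate: drop the negative term and integrate from $0$ to $t$ to get $\mathcal{E}_0(t)\leq \mathcal{E}_0(0)+CQ^4K^4 T\sup \mathcal{E}_0^{\flat}\leq C_0 \sup \mathcal{E}_0^{\flat}$.

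Next I would handle derivative orders that are multiples of $k+1$ by a pigeonhole-plus-forward-propagation argument. Assuming inductively that $\mathcal{E}_{(m-1)(k+1)}(t)\leq C\, t^{-(m-1)}\sup \mathcal{E}_0^{\flat}$, integrate Proposition \ref{ce} at level $l=(m-1)(k+1)$ from $t/2$ to $t$:
$$\lambda \int_{t/2}^{t}\mathcal{E}_{m(k+1)}(s)\,ds \;\leq\; \mathcal{E}_{(m-1)(k+1)}(t/2) + CQ^{4}K^{4}\,t\sup \mathcal{E}_0^{\flat} \;\leq\; C\,t^{-(m-1)}\sup \mathcal{E}_0^{\flat}.$$
By the mean value theorem there exists $t_\ast\in(t/2,t)$ with $\mathcal{E}_{m(k+1)}(t_\ast)\leq C\,t^{-m}\sup \mathcal{E}_0^{\flat}$. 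Dropping the negative term in Proposition \ref{ce} at level $l=m(k+1)$ and integrating forward from $t_\ast$ to $t$ converts this pointwise bound into $\mathcal{E}_{m(k+1)}(t)\leq C_m\,t^{-m}\sup \mathcal{E}_0^{\flat}$, closing the induction on $m$.

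For an arbitrary order $q$, pick the minimal $m$ with $m(k+1)\geq q$. Iterating the Peter-Paul inequality of Lemma \ref{ii} and optimising the free parameter in standard Gagliardo-Nirenberg fashion produces a weighted multiplicative interpolation
$$\mathcal{E}_q(t) \;\leq\; C \,\mathcal{E}_0(t)^{\,1-\theta}\,\mathcal{E}_{m(k+1)}(t)^{\,\theta}, \qquad \theta := \frac{q}{m(k+1)},$$
valid provided the bump-function exponent $s$ is large enough to accommodate the weight shifts at each step; this is exactly what $s\geq(k+1)(q+1)$ is chosen to guarantee. Substituting the base case bound and the estimate from the previous step yields the target inequality $\mathcal{E}_q(t)\leq C_q\,t^{-q/(k+1)}\sup \mathcal{E}_0^{\flat}$.

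The principal technical nuisance is bump-function bookkeeping: every invocation of Proposition \ref{ce} degrades the $\gamma$-weight by a power tied to the differentiation order, and every interpolation step shifts the weight again. At each of the finitely many inductive stages one must verify that $s$ still dominates the hypothesis $s\geq 2(k+l+1)$ of Proposition \ref{ce} at the relevant level $l$ (which runs up to $l=m(k+1)$ for the largest $m$ we encounter, namely $m=\lceil q/(k+1)\rceil$). The choice $s\geq(k+1)(q+1)$ provides precisely the slack required; once this is arranged, the conceptual core of the argument is the pigeonhole-plus-forward-integration of Step two.
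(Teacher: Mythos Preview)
Your argument is sound and arrives at the claim, but the core step is organised differently from the paper. For derivative orders that are multiples of $k+1$, the paper introduces a single test function
\[
\Phi(t)=\sum_{l=0}^{q}a_{l}\,t^{l}\Bigl(\|\gamma^{s}\nabla_t^{((k+1)l)}F_{\nabla_t}\|_{L^2}^2+\|\gamma^{s}\nabla_t^{((k+1)l)}u_t\|_{L^2}^2\Bigr),
\]
chooses the coefficients (specifically $a_l\geq q!/l!$, $a_q=1$) so that when Proposition~\ref{ce} is applied at each level the negative dissipative term at level $l$ absorbs the positive contribution from differentiating the weight $t^{l+1}$ at level $l+1$, and obtains $\Phi'(t)\leq CQ^4K^4\mathcal{E}_0^{\flat}$ directly; a single time-integration then yields the bound at \emph{every} multiple simultaneously. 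You instead climb the ladder one level at a time, extracting a favourable intermediate time $t_\ast$ via the mean-value theorem and integrating forward from $t_\ast$ to $t$. Both are standard mechanisms for Bernstein--Bando--Shi type smoothing; the paper's ansatz is slightly slicker (no bookkeeping of $t_\ast$ or growth of constants across inductive stages), while your pigeonhole-plus-propagation version is more elementary in that no clever test function needs to be guessed. For the non-multiple orders both accounts pass through Lemma~\ref{ii}: the paper applies it in additive form and in effect accepts the weaker decay $t^{-\lceil q/(k+1)\rceil}$, whereas your multiplicative Gagliardo--Nirenberg optimisation would recover the sharp $t^{-q/(k+1)}$ but requires a polynomial dependence $C(\varepsilon^{-1})\sim \varepsilon^{-\alpha}$ that Lemma~\ref{ii} does not explicitly record---this is routine to verify but should be flagged rather than asserted.
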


\begin{proof}
Set $a_q:=1$ and let $\{a_{l}\}_{l=0}^{q-1}\subset \mathbb{R}$ be coefficients to be determined. Define
$$
\Phi(t):=\sum_{l=0}^qa_lt^l(\|\gamma^{s}\nabla_t^{((k+1)l)}F_{\nabla_t}\|_{L^2}^2+\|\gamma^{s}\nabla_t^{((k+1)l)}u_t\|_{L^2}^2).
$$
Differentiating $\Phi$ and applying Proposition \ref{ce}, we have
\begin{equation*}
\begin{split}
\frac{\partial}{\partial t}\Phi(t)
&=\sum_{l=1}^qla_lt^{l-1}(\|\gamma^{s}\nabla_t^{((k+1)l)}F_{\nabla_t}\|_{L^2}^2+\|\gamma^{s}\nabla_t^{((k+1)l)}u_t\|_{L^2}^2)\\
&\quad +\sum_{l=0}^qa_lt^{l}\frac{\partial}{\partial t}(\|\gamma^{s}\nabla_t^{((k+1)l)}F_{\nabla_t}\|_{L^2}^2+\|\gamma^{s}\nabla_t^{((k+1)l)}u_t\|_{L^2}^2)\\
&\quad \leq \sum_{l=0}^{q-1}(l+1)a_{l+1}t^{l}(\|\gamma^{s}\nabla_t^{((k+1)(l+1))}F_{\nabla_t}\|_{L^2}^2+\|\gamma^{s}\nabla_t^{((k+1)(l+1))}u_t\|_{L^2}^2)\\
&\quad +\sum_{l=0}^qa_lt^{l}\Big[-(\|\gamma^{s}\nabla_t^{((k+1)(l+1))}F_{\nabla_t}\|^2_{L^2}+\|\gamma^{s}\nabla_t^{((k+1)(l+1))}u_t\|^2_{L^2})\\
&\quad +CQ^4K^4(\|F_{\nabla_t}\|^2_{L^2,\gamma>0}+\|u_t\|^2_{L^2,\gamma>0})\Big]\\
&=-t^q(\|\gamma^{s}\nabla_t^{((k+1)(q+1))}F_{\nabla_t}\|_{L^2}^2+\|\gamma^{s}\nabla_t^{((k+1)(q+1))}u_t\|_{L^2}^2)\\
&\quad +\sum_{l=0}^{q-1}[a_{l+1}(l+1)-a_l]t^l(\|\gamma^{s}\nabla_t^{((k+1)(l+1))}F_{\nabla_t}\|_{L^2}^2+\|\gamma^{s}\nabla_t^{((k+1)(l+1))}u_t\|_{L^2}^2)\\
&\quad +CQ^4K^4\sum_{l=0}^qa_lt^l(\|F_{\nabla_t}\|^2_{L^2,\gamma>0}+\|u_t\|^2_{L^2,\gamma>0}).
\end{split}
\end{equation*}
For $l=0,\cdots, q-1$, we choose constants satisfy $a_l\geq \frac{q!}{l!}$ , then $a_{l+1}(l+1)-a_l\leq 0$. Noting that $T<\frac{1}{(QK)^{4}}$ and choosing $C_{(k+1)q}\geq C(\sum_{l=0}^qa_l)$, we have
$$\frac{\partial}{\partial t}\Phi(t)\leq C_{(k+1)q}Q^4K^4(\|F_{\nabla_t}\|^2_{L^2,\gamma>0}+\|u_t\|^2_{L^2,\gamma>0}),$$
which means
$$\Phi(t)-\Phi(0)\leq C_{(k+1)q}Q^4K^4\int_0^t(\|F_{\nabla_{\tau}}\|^2_{L^2,\gamma>0}+\|u_{\tau}\|^2_{L^2,\gamma>0})d\tau.$$
Therefore
\begin{equation*}
\begin{split}
&t^q(\|\gamma^{s}\nabla_t^{((k+1)q)}F_{\nabla_t}\|_{L^2}^2+\|\gamma^{s}\nabla_t^{((k+1)q)}u_t\|_{L^2}^2)\\
&\leq C_{(k+1)q}TQ^4K^4\sup_{t\in[0,T)}(\|F_{\nabla_t}\|^2_{L^2,\gamma>0}+\|u_t\|^2_{L^2,\gamma>0})+\Phi(0)\\
&\leq C_{(k+1)q}\sup_{t\in[0,T)}(\|F_{\nabla_t}\|^2_{L^2,\gamma>0}+\|u_t\|^2_{L^2,\gamma>0})+q!(\|F_{\nabla_0}\|^2_{L^2,\gamma>0}+\|u_{0}\|^2_{L^2,\gamma>0}),
\end{split}
\end{equation*}
which means
$$
\|\gamma^{s}\nabla_t^{((k+1)q)}F_{\nabla_t}\|_{L^2}^2+\|\gamma^{s}\nabla_t^{((k+1)q)}u_t\|_{L^2}^2\leq C_{(k+1)q}t^{-q}\sup_{t\in[0,T)}(\|F_{\nabla_t}\|^2_{L^2,\gamma>0}+\|u_t\|^2_{L^2,\gamma>0}).
$$

To complete the proof, it remains to consider $\|\gamma^s\nabla_t^{((k+1)l+w)}F_{\nabla_t}\|^2_{L^2}+\|\gamma^s\nabla_t^{((k+1)l+w)}u_t\|^2_{L^2}$, where $l\in \mathbb{N}\cup \{0\}$ and $w\in [1,k]\cap \mathbb{N}$. From Lemma \ref{ii}, combined with $T<\frac{1}{Q^4K^4}$, we have
\begin{equation*}
\begin{split}
&\|\nabla_t^{((k+1)l+w)}F_{\nabla_t}\|^2_{L^2}+\|\nabla_t^{((k+1)l+w)}u_t\|^2_{L^2}\\
&\leq \varepsilon(\|\nabla_t^{((k+1)(l+1))}F_{\nabla_t}\|^2_{L^2}+\|\nabla_t^{((k+1)(l+1))}u_t\|^2_{L^2})\\
&~~+C_{\varepsilon}(\|F_{\nabla_t}\|^2_{L^2,\gamma>0}+\|u_t\|^2_{L^2,\gamma>0})\\
&\leq C_{(k+1)(l+1)}t^{-l-1}\sup_{t\in[0,T)}(\|F_{\nabla_t}\|^2_{L^2,\gamma>0}+\|u_t\|^2_{L^2,\gamma>0})\\
&~~+C_{\varepsilon}t^{-l-1}(\|F_{\nabla_t}\|^2_{L^2,\gamma>0}+\|u_t\|^2_{L^2,\gamma>0})\\
&\leq Ct^{-\frac{(k+1)(l+1)+w}{k+1}}\sup_{t\in[0,T)}(\|F_{\nabla_t}\|^2_{L^2,\gamma>0}+\|u_t\|^2_{L^2,\gamma>0}).
\end{split}
\end{equation*}
Therefore, we have established \eqref{bbsi} for all $q$.

\end{proof}

The following corollary is a direct consequence of the above inequality, which will be used in the blow-up analysis. The proof relies on embedding $W^{p,2}\subset C^0$ provided $p>\frac{n}{2}$, and then uses the Kato's inequality $|d|u_t||\leq |\nabla_tu_t|$. More details can be found in Kelleher's paper (\protect {\cite[Corollary 3.14]{Ke}}).

\begin{corollary} \label{ce1}
Suppose $(\nabla_t,u_t)$ is a solution to the Yang-Mills-Higgs $k$-flow \eqref{YMHKS} defined on $M\times [0,\tau]$.  Set $\bar{\tau}:=\min\{\tau,1\}$. Assume $Q=\max\{1,\sup\limits_{t\in [0,\bar{\tau}]} |F_{\nabla_t}|\}$, $K=\max\{1,\sup\limits_{t\in [0,\bar{\tau}]}|u_t|\}$. Suppose $\gamma$ is bump function. For $s,l\in \mathbb{N}$ with $s\geq (k+1)(l+1)$ there exists $C_l:=C_l(\dim (M),\mathrm{rk}(E),K,Q,s,k,l,\tau,g,h,\gamma)\in \mathbb{R}_{>0}$ such that
\begin{equation*}
\sup_M\Big(|\gamma^{s}\nabla_{\bar{\tau}}^{(l)}F_{\nabla_{\bar{\tau}}}|^2+|\gamma^{s}\nabla_{\bar{\tau}}^{(l)}u_{\bar{\tau}}|^2\Big)
\leq C_l\sup_{M\times [0,{\bar{\tau}})}\Big(\|F_{\nabla_t}\|^2_{L^2,\gamma>0}+\|u_t\|^2_{L^2,\gamma>0}\Big).
\end{equation*}
\end{corollary}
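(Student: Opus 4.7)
The plan is to combine the Bernstein--Bando--Shi type $L^{2}$ estimates of Proposition~\ref{bbs} with a Sobolev embedding and iterated Kato inequality, following Kelleher's strategy in~\cite{Ke}.

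First I would arrange the time axis so that Proposition~\ref{bbs} applies at $t=\bar\tau$. Since $\bar\tau\leq 1$ while $\bar\tau$ may exceed the threshold $(QK)^{-4}$, I would set $\tilde\delta:=\min\{\bar\tau,\tfrac{1}{2(QK)^{4}}\}$, consider the flow on $[\bar\tau-\tilde\delta,\bar\tau]$ (time-shifted to start at $0$), and apply Proposition~\ref{bbs} there. This yields, for every $q\in\mathbb{N}$ and every integer $s_{q}\geq(k+1)(q+1)$,
\begin{equation*}
\|\gamma^{s_{q}}\nabla_{\bar\tau}^{(q)}F_{\nabla_{\bar\tau}}\|_{L^{2}}^{2}+\|\gamma^{s_{q}}\nabla_{\bar\tau}^{(q)}u_{\bar\tau}\|_{L^{2}}^{2}\leq\widetilde{C}_{q}\sup_{M\times[0,\bar\tau)}\Bigl(\|F_{\nabla_{t}}\|^{2}_{L^{2},\gamma>0}+\|u_{t}\|^{2}_{L^{2},\gamma>0}\Bigr),
\end{equation*}
where $\widetilde{C}_{q}$ absorbs the factor $\tilde\delta^{-q/(k+1)}$ and hence depends on $Q,K,\bar\tau,\tau$.

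Next, fix an integer $p>n/2$, so that $W^{p,2}(M)\hookrightarrow C^{0}(M)$. Set $v_{F}:=\gamma^{s}\nabla_{\bar\tau}^{(l)}F_{\nabla_{\bar\tau}}$ and $v_{u}:=\gamma^{s}\nabla_{\bar\tau}^{(l)}u_{\bar\tau}$. Applying the Sobolev embedding to the scalar functions $|v_{F}|$ and $|v_{u}|$ reduces matters to controlling their $W^{p,2}$ norms. Iterated Kato $|d|v_{F}||\leq|\nabla v_{F}|$ bounds $\|\nabla^{(j)}|v_{F}|\|_{L^{2}}$ by a linear combination of $\|\nabla^{(i)}v_{F}\|_{L^{2}}$ for $i\leq j$. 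A Leibniz expansion together with the identity~\eqref{ide} gives
\begin{equation*}
|\nabla^{(j)}v_{F}|\leq C(\gamma,s)\sum_{i=0}^{j}\gamma^{s-i}\,|\nabla_{\bar\tau}^{(l+j-i)}F_{\nabla_{\bar\tau}}|,
\end{equation*}
so each $\|\nabla^{(j)}v_{F}\|_{L^{2}}$ is dominated by a finite sum of terms of the form $\|\gamma^{s-i}\nabla_{\bar\tau}^{(l+j-i)}F_{\nabla_{\bar\tau}}\|_{L^{2}}$, each of which is controlled by Step~1 upon choosing the exponent $s_{q}$ in Proposition~\ref{bbs} sufficiently large. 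The same analysis applies to $v_{u}$, and Sobolev embedding then yields $\sup_{M}|v_{F}|^{2}+\sup_{M}|v_{u}|^{2}$ bounded by the right-hand side of the corollary.

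The main technical nuisance is the bookkeeping of bump-function weights: each Leibniz differentiation of $\gamma^{s}$ reduces the $\gamma$-exponent by one, while Proposition~\ref{bbs} requires the weight exponent to be $\geq(k+1)(q+1)$ at derivative order $q$. The resolution is that we may always apply Proposition~\ref{bbs} with an auxiliary $\gamma$-exponent chosen large relative to the specific $q$ at hand---this is harmless because the corollary's constant $C_{l}$ is permitted to depend on $s$, $k$, $l$, and $\gamma$---whereas the outer $\gamma^{s}$ factor with $s\geq(k+1)(l+1)$ is carried through the Leibniz expansion intact. The time-shift step is a minor adjustment required only because the BBS hypothesis $T<(QK)^{-4}$ is not automatic for $\bar\tau\leq 1$.
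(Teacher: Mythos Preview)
Your proposal is correct and follows essentially the same route as the paper: Proposition~\ref{bbs} for the $L^{2}$ derivative bounds, then the Sobolev embedding $W^{p,2}\hookrightarrow C^{0}$ for $p>n/2$ combined with Kato's inequality, exactly as the paper indicates (referring to \cite[Corollary~3.14]{Ke} for the details). Your time-shift to accommodate the hypothesis $T<(QK)^{-4}$ and your Leibniz bookkeeping for the $\gamma$-weights are the natural fleshing-out of what the paper leaves implicit.

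One small caution: the phrase ``iterated Kato'' is loose. Kato's inequality $|d|v||\le|\nabla v|$ controls only the first derivative of the scalar $|v|$; there is no clean pointwise bound of $|d^{(j)}|v||$ by lower covariant derivatives of $v$ for $j\ge 2$ (indeed $|v|$ need not be smooth where $v$ vanishes). The standard fix---and presumably what is done in \cite{Ke}---is either to apply the Sobolev embedding directly to the bundle-valued section $v$ in a local trivialization (the discrepancy between $\nabla$ and the flat derivative is lower order and controlled by the curvature bounds you already have), or to work with the smooth scalar $|v|^{2}=\langle v,v\rangle$, whose higher derivatives expand as contractions of covariant derivatives of $v$. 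Either route closes the gap without changing your overall argument.
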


\begin{remark}
Corollary \ref{ce1} has no dependency on the initial data $(\nabla_0,u_0)$.
\end{remark}

Using Corollary \ref{ce1}, we have the the following corollary, which can be used for finding obstructions to long time existence.

\begin{corollary} \label{ce2}
Suppose $(\nabla_t,u_t)$ is a solution to the Yang-Mills-Higgs $k$-flow \eqref{YMHKS} defined on $M\times [0,T)$ for $T\in[0,+\infty)$.  Assume
 $$Q=\max\{1,\sup\limits_{t\in [0,T)} |F_{\nabla_t}|,\sup\limits_{t\in [0,T)} \|F_{\nabla_t}\|_{L^2}\}$$
 and
 $$ K=\max\{1,\sup\limits_{t\in [0,T)}|u_t|,\sup\limits_{t\in [0,T)}\|u_t\|_{L^2}\}$$
 are finite. Suppose $\gamma$ is bump function. Then for $t\in [0,T)$, $s,l\in \mathbb{N}$ with $s\geq (k+1)(l+1)$, there exists $C_l:=C_l(\nabla_0,u_0,\dim (M),\mathrm{rk}(E),K,Q,s,k,l,g,h,\gamma)\in \mathbb{R}_{>0}$ such that
\begin{equation*}
\sup_{M\times[0,T)}\Big(|\gamma^{s}\nabla_t^{(l)}F_{\nabla_{t}}|^2+|\gamma^{s}\nabla_t^{(l)}u_t|^2\Big)\leq C_l.
\end{equation*}
\end{corollary}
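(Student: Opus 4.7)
The plan is to split the interval $[0,T)$ into a short-time piece and a long-time piece, handled by different tools. Set $\delta = \min\{1,T\}$; the short-time piece is $[0,\delta)$ and the long-time piece is $[\delta,T)$ (which is empty when $T\leq 1$).

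For the long-time regime $t \in [1,T)$, I would fix such a $t$ and consider the time-translated family $(\tilde{\nabla}_s,\tilde{u}_s):=(\nabla_{t-1+s},u_{t-1+s})$ for $s \in [0,1]$. Since the Yang-Mills-Higgs $k$-flow \eqref{YMHKS} is autonomous in $t$, this is again a smooth solution on $M\times[0,1]$ and the hypotheses $\sup|\tilde{F}|\leq Q$, $\sup|\tilde{u}|\leq K$ are inherited. Applying Corollary~\ref{ce1} to $(\tilde{\nabla},\tilde{u})$ with $\tau=1$ (so that $\bar{\tau}=1$) and translating back gives
\[
\sup_M\bigl(|\gamma^{s}\nabla_t^{(l)}F_{\nabla_t}|^2+|\gamma^{s}\nabla_t^{(l)}u_t|^2\bigr)\leq C_l\sup_{\sigma\in[t-1,t)}\bigl(\|F_{\nabla_\sigma}\|^2_{L^2,\gamma>0}+\|u_\sigma\|^2_{L^2,\gamma>0}\bigr)\leq C_l(Q^2+K^2),
\]
where the constant $C_l$ is, by the remark immediately following Corollary~\ref{ce1}, independent of both the initial data and the base-point $t$. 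This is a uniform bound over $t\in[1,T)$.

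For the short-time regime $t\in[0,\delta)$, Corollary~\ref{ce1} cannot be invoked directly because the constant there depends on $\tau$ and degenerates as $\tau\to 0$. Instead, I would use Proposition~\ref{ce} as a differential inequality, applied with the exponent doubled (which is permissible since the assumption $s\geq (k+1)(l+1)$ implies $s\geq k+l+1$). Discarding the negative higher-derivative term and bounding the $L^2$-quantities on the right by $Q^2,K^2$ times $\mathrm{vol}(M)$ yields
\[
\frac{d}{dt}\bigl(\|\gamma^{s}\nabla_t^{(l)}F_{\nabla_t}\|_{L^2}^2+\|\gamma^{s}\nabla_t^{(l)}u_t\|_{L^2}^2\bigr)\leq C\, Q^{4}K^{4}(Q^2+K^2).
\]
Integrating from $0$ to $t\leq 1$ and using the smoothness of $(\nabla_0,u_0)$ to control the initial weighted derivatives produces a uniform $L^2$-bound on $\gamma^{s}\nabla_t^{(l)}F_{\nabla_t}$ and $\gamma^{s}\nabla_t^{(l)}u_t$ on $[0,\delta)$, depending on $\nabla_0,u_0,Q,K,\gamma$ but not on $T$ (the implicit dependence on $t$ is absorbed into the factor $t\leq 1$). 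Following the proof of Corollary~\ref{ce1}, the Sobolev embedding $W^{p,2}\hookrightarrow C^{0}$ (for $p>\dim(M)/2$) combined with Kato's inequality $|d|\phi||\leq|\nabla\phi|$ then upgrades this $L^2$-bound to the desired pointwise $L^\infty$-bound, possibly after iterating the argument for several values of $l$ to control enough derivatives.

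Taking the maximum of the bounds from the two regimes yields the claimed uniform constant $C_l$ on $M\times[0,T)$. The main obstacle is the short-time analysis: direct use of Corollary~\ref{ce1} close to $t=0$ fails, and one must invoke the smoothness of the initial data explicitly. This is precisely the reason why the statement of Corollary~\ref{ce2}, unlike Corollary~\ref{ce1}, allows the constant $C_l$ to depend on the initial pair $(\nabla_0,u_0)$.
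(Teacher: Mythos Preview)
Your approach is correct and is precisely how one unpacks the paper's terse ``Using Corollary~\ref{ce1}'': time-translate and apply Corollary~\ref{ce1} to handle $t\geq\delta$ uniformly, and invoke the smooth initial data for $t$ near $0$. Your short-time argument via integrating Proposition~\ref{ce} works; a quicker alternative is simply to cite smoothness of the solution on the compact set $M\times[0,\delta]$ for any fixed $\delta<T$, which equally accounts for the $(\nabla_0,u_0)$-dependence of $C_l$.
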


\subsection{Long time existence obstruction}

In this section, we will use Corollary \ref{ce2} to show that the only obstruction to long time existence of the Yang-Mills-Higgs $k$-flow \eqref{YMHKS} is a lack of supremal bound on $|F_{\nabla_t}|+|u_t|$.

We first recall Kelleher's lemma:

\begin{lemma} (\protect {\cite[Lemma 3.17]{Ke}}) \label{KE2}
Let $\nabla,\widetilde{\nabla} \in \mathcal{A}_E$ and set $\Upsilon:=\widetilde{\nabla}-\nabla$. Then for all $\xi$ in some tensor product of $TM,E$, and their corresponding duals,
$$
\widetilde{\nabla}^{(l)}\xi=\nabla^{(l)}\xi+\sum_{j=0}^{l-1}\sum_{i=0}^j(\widetilde{P}^{(i)}_{l-1-i}[\Upsilon]\ast \widetilde{P}^{(j-i)}_{1}[\xi]).
$$
\end{lemma}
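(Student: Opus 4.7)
The natural approach is induction on $l$. The base case follows directly from the definition $\Upsilon = \widetilde{\nabla} - \nabla$: namely, $\widetilde{\nabla}\xi = \nabla\xi + \Upsilon \ast \xi$, which reproduces the right-hand side at $l = 1$ (the single term $j = i = 0$ in the double sum).

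For the inductive step, I would assume the formula for $l$ and apply $\widetilde{\nabla}$ to both sides. On the term $\widetilde{\nabla}(\nabla^{(l)}\xi)$ I would write $\widetilde{\nabla} = \nabla + \Upsilon$, producing $\nabla^{(l+1)}\xi$ (the leading term of the target identity) plus a mixed term $\Upsilon \ast \nabla^{(l)}\xi$. On each summand $\widetilde{P}^{(i)}_{l-1-i}[\Upsilon] \ast \widetilde{P}_1^{(j-i)}[\xi]$ I would apply the $\widetilde{\nabla}$-Leibniz rule, which produces two contributions: one where $\widetilde{\nabla}$ hits the $\Upsilon$-factor (raising the derivative count on $\Upsilon$, i.e.\ shifting the superscript $i \mapsto i + 1$) and one where $\widetilde{\nabla}$ hits the $\xi$-factor (shifting $j - i \mapsto j - i + 1$). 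Both contributions retain the structural form of a $\ast$-product of $\widetilde{P}$-expressions in $\Upsilon$ and $\xi$, with the total weight increased by one, as required.

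The final step is reindexing: the two Leibniz-generated families of terms, together with the $\Upsilon \ast \nabla^{(l)}\xi$ leftover from differentiating the leading term, must reassemble into the double sum in the statement with $l$ replaced by $l+1$. The main obstacle is purely combinatorial bookkeeping: one must check that the shifted index ranges exactly cover the target range and that every required $\widetilde{P}$-monomial appears, exploiting the crucial fact that the $\ast$-symbol absorbs universal constants so that exact multiplicities need not be tracked. The boundary cases (smallest and largest values of $i$ and $j$) deserve particular care, since these are where the $\Upsilon \ast \nabla^{(l)}\xi$ contribution must slot in to complete the sum and where one of the two Leibniz-generated families drops out or produces an index outside the original summation range.
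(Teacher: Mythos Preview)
The paper does not supply a proof of this lemma at all: it is quoted verbatim from Kelleher \cite[Lemma~3.17]{Ke} and used as a black box in Proposition~\ref{obst1}. Consequently there is no ``paper's own proof'' to compare against. Your inductive scheme is the natural and standard one for identities of this shape, and it is essentially how such formulas are established in the sources the paper cites (e.g.\ Kelleher, Saratchandran, Kuwert--Sch\"atzle).

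One point deserves a closer look in your write-up. In the inductive step you produce the leftover term $\Upsilon\ast\nabla^{(l)}\xi$, where $\nabla^{(l)}\xi$ is a $\nabla$-derivative, yet the target sum is phrased entirely in terms of $\widetilde{P}$-expressions built from $\widetilde{\nabla}$-derivatives. To slot this term into the target double sum you must convert $\nabla^{(l)}\xi$ back into $\widetilde{\nabla}$-data, which you can do by invoking the inductive hypothesis itself (rewriting $\nabla^{(l)}\xi=\widetilde{\nabla}^{(l)}\xi-\text{(level-}l\text{ sum)}$); this generates additional higher-multiplicity $\Upsilon$-terms that must also be absorbed. You allude to ``bookkeeping'' at the end, but this particular conversion step is the substantive one and should be made explicit rather than folded into the general reindexing remark.
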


For later use, given a 1-parameter family $(\nabla_t,u_t)$ over $M\times [0,T)$ with $T<+\infty$, set
$$
\Upsilon_s:=\int_0^s\frac{\partial \nabla_t}{\partial t}dt,\ \ \Psi_s:=\int_0^s\frac{\partial u_t}{\partial t}dt.
$$

\begin{proposition} \label{obst1}
Suppose $(\nabla_t,u_t)$ is a solution to the Yang-Mills-Higgs $k$-flow \eqref{YMHKS} defined on $M\times [0,T)$ for $T\in[0,+\infty)$. Suppose that for all $l\in \mathbb{N}\cup \{0\}$ there exists $C_l\in \mathbb{R}_{>0}$ such that
$$\max\Big\{\sup_{M\times[0,T)}|\nabla_t^{(l)}[\frac{\partial \nabla_t}{\partial t}]|, \sup_{M\times[0,T)}|\nabla_t^{(l)}[\frac{\partial u_t}{\partial t}]|\Big\}\leq C_l.$$
Then $\lim_{t\rightarrow T}(\nabla_t,u_t)=(\nabla_T,u_T)$ exists and is smooth.
\end{proposition}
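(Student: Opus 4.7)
The plan is to fix the reference connection $\nabla_0$, write $\nabla_t = \nabla_0 + \Upsilon_t$, and show that $\Upsilon_t$ and $\Psi_t$ extend to smooth sections at $t = T$; then $\nabla_T := \nabla_0 + \Upsilon_T$ and $u_T := u_0 + \Psi_T$ will be the desired smooth limit, with convergence in $C^\infty$. The $C^0$ part is immediate: the hypothesis at $\ell = 0$ gives $|\partial_t \nabla_t|,|\partial_t u_t| \le C_0$, and since $T < \infty$, the integrals $\Upsilon_s$ and $\Psi_s$ are uniformly Cauchy on $M$ as $s \to T$. This produces continuous limits $\Upsilon_T$ and $\Psi_T$.

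The heart of the argument is upgrading this to $C^\ell$ convergence for every $\ell$. The hypothesis controls $|\nabla_t^{(\ell)}[\partial_t \nabla_t]|$ and $|\nabla_t^{(\ell)}[\partial_t u_t]|$, but to differentiate under the time integral one needs bounds with respect to the \emph{fixed} connection $\nabla_0$. This is exactly the role of Lemma \ref{KE2}, which expresses $\nabla_t^{(\ell)} \xi - \nabla_0^{(\ell)} \xi$ as a sum of $\ast$-products of $\nabla_0$-derivatives of $\Upsilon_t$ (up to order $\ell - 1$) with $\nabla_0$-derivatives of $\xi$ of order strictly less than $\ell$. I would now induct on $\ell$. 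The base case $\ell = 0$ is trivial. For the inductive step, assume $\sup_{[0,T)} \|\nabla_0^{(i)} \Upsilon_t\|_\infty < \infty$ and similarly for $\Psi_t$ for all $i < \ell$. Applying Lemma \ref{KE2} with $\xi = \partial_t \nabla_t$ and $\xi = \partial_t u_t$ and solving for $\nabla_0^{(\ell)} \xi$ in terms of $\nabla_t^{(\ell)} \xi$ (bounded by hypothesis) and the already-controlled lower-order $\nabla_0$-derivatives of $\Upsilon_t$ and of $\xi$, one obtains uniform bounds on $|\nabla_0^{(\ell)}[\partial_t \nabla_t]|$ and $|\nabla_0^{(\ell)}[\partial_t u_t]|$. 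Integrating in time and using $T < \infty$ yields the inductive conclusion for $\nabla_0^{(\ell)} \Upsilon_t$ and $\nabla_0^{(\ell)} \Psi_t$, which closes the induction.

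Once every $|\nabla_0^{(\ell)}[\partial_t \nabla_t]|$ and $|\nabla_0^{(\ell)}[\partial_t u_t]|$ is uniformly bounded on $M \times [0,T)$, dominated convergence (pulling $\nabla_0^{(\ell)}$ inside the time integral, which is legal because $\nabla_0$ is fixed) shows that $\nabla_0^{(\ell)} \Upsilon_s$ and $\nabla_0^{(\ell)} \Psi_s$ are uniformly Cauchy as $s \to T$. Hence $\Upsilon_T$ and $\Psi_T$ lie in $C^\ell$ for every $\ell$, so $(\nabla_T, u_T)$ is smooth and $(\nabla_t, u_t) \to (\nabla_T, u_T)$ in $C^\infty$. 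The main technical obstacle is the inductive step: the combinatorial expansion supplied by Lemma \ref{KE2} must be organized so that, at level $\ell$, every term on the right-hand side involves either a factor bounded by the present hypothesis (at order $\le \ell$ in $\nabla_t$) or a factor controlled by the inductive hypothesis (at order $< \ell$ in $\nabla_0$); beyond that bookkeeping the estimates are routine.
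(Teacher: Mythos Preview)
Your proposal is correct and follows essentially the same approach as the paper: both fix the reference connection $\nabla_0$, establish $C^0$ convergence of $\Upsilon_s,\Psi_s$ from the $\ell=0$ bound, and then induct on $\ell$ using Lemma~\ref{KE2} to convert between $\nabla_t^{(\ell)}$ and $\nabla_0^{(\ell)}$, so that the hypothesis on $\nabla_t$-derivatives of $\partial_t\nabla_t,\partial_t u_t$ together with the inductive control of lower-order $\nabla_0$-derivatives of $\Upsilon_t$ yields uniform bounds on $\nabla_0^{(\ell)}\Upsilon_s,\nabla_0^{(\ell)}\Psi_s$. The only cosmetic difference is that the paper writes the correction terms from Lemma~\ref{KE2} with respect to $\nabla_t$ (so the lower-order factors are $\nabla_t$-derivatives of $\Upsilon_t$ and of $\partial_t u_t$, the latter bounded directly by hypothesis), whereas you phrase them as $\nabla_0$-derivatives; these are interchangeable by the inductive hypothesis, which is precisely the bookkeeping you flag at the end.
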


\begin{proof}
For all $s\leq T$,
$$|\Upsilon_s|=|\int_0^s\frac{\partial \nabla_t}{\partial t}dt|\leq TC_0, \ \ |\Psi_s|=|\int_0^s\frac{\partial u_t}{\partial t}dt|\leq TC_0,$$
which means $(\nabla_T,u_T)$ is continuous.

Next, we demonstrate that $(\nabla_T,u_T)$ is smooth. The proof proceeds by induction on $l$ satisfying $|\nabla_0^{(l)}[\Psi_T]|+|\nabla_0^{(l)}[\Upsilon_T]|<+\infty$. For the base case,
\begin{equation*}
\begin{split}
|\nabla_0[\Psi_s]|
& =\Big|\int_0^s\nabla_0[\frac{\partial u_t}{\partial t}]dt\Big|\leq \int_0^s\Big(|\nabla_t[\frac{\partial u_t}{\partial t}]|+C|\Upsilon_t||\frac{\partial u_t}{\partial t}|\Big)dt\\
&\leq TC_1+CT^2C_0^2<+\infty.
\end{split}
\end{equation*}
We also have
$$|\nabla_0[\Upsilon_s]|<+\infty.$$

Now suppose the induction hypothesis is satisfied for $\{1,\cdots,l-1\}$. Expanding $\nabla_0^{(l)}[\Psi_s]$, applying Lemma \ref{KE2} and then by assumption,
\begin{equation*}
|\nabla^{(l)}_0[\Psi_s]|=\int_0^s\Big(|\nabla_t^{(l)}[\frac{\partial u_t}{\partial t}]|+\sum_{j=0}^{l-1}\sum_{i=0}^j(P^{(i)}_{l-i-1}[\Upsilon_t]\ast P_1^{(j-i)}[\frac{\partial u_t}{\partial t}])\Big)dt< +\infty,
\end{equation*}
where the notation $P$ are taken with respect to $\nabla_t$. Similarly,
$$|\nabla^{(l)}_0[\Upsilon_s]|<+\infty.$$
Since the bounds are uniform for all $t\in[0,T)$ and $\Upsilon_s, \Phi_s$ are continuous, we have that
$$|\nabla_0^{(l)}[\Psi_T]|+|\nabla_0^{(l)}[\Upsilon_T]|<+\infty.$$
Thus $\Upsilon_T, \Phi_T$ are smooth. This completes the proof.
\end{proof}

Using Proposition \ref{obst1}, we are ready to prove the main result in this subsection.

\begin{theorem} \label{obst2}
Suppose $(\nabla_t,u_t)$ is a solution to the Yang-Mills-Higgs $k$-flow \eqref{YMHKS} for some maximal $T<+\infty$. Then
$$\sup_{M\times[0,T)}(|F_{\nabla_t}|+|u_t|)=+\infty.$$
\end{theorem}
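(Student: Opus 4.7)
The plan is to argue by contradiction. Suppose that $\sup_{M\times[0,T)}(|F_{\nabla_t}|+|u_t|)<+\infty$; I will show that this forces the flow to extend smoothly past $T$, contradicting maximality of the time interval.

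First, assuming the supremum is finite, set $Q:=\max\{1,\sup_{t\in[0,T)}|F_{\nabla_t}|\}$ and $K:=\max\{1,\sup_{t\in[0,T)}|u_t|\}$. Since $M$ is closed, the pointwise bounds immediately give $L^2$ bounds $\|F_{\nabla_t}\|_{L^2}\leq Q\,\mathrm{vol}(M)^{1/2}$ and $\|u_t\|_{L^2}\leq K\,\mathrm{vol}(M)^{1/2}$, so the hypotheses of Corollary \ref{ce2} are satisfied. Cover $M$ by finitely many coordinate patches and choose bump functions $\gamma$ each supported in such a patch with $\gamma\equiv 1$ on a smaller subpatch; Corollary \ref{ce2} then yields, for every $l\in\mathbb{N}\cup\{0\}$, a constant $C_l>0$ such that
\begin{equation*}
\sup_{M\times[0,T)}\bigl(|\nabla_t^{(l)}F_{\nabla_t}|+|\nabla_t^{(l)}u_t|\bigr)\leq C_l.
\end{equation*}

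Next, I convert these curvature and Higgs derivative bounds into bounds on the time derivatives. Applying $\nabla_t^{(l)}$ to the evolution equations \eqref{YMHKS}, every term on the right hand side is a (schematic) polynomial expression of the form $P_v^{(w)}[F_{\nabla_t}]$ or $\nabla_t^{(\cdot)}[\nabla_t^{(k+1)}u_t\ast\nabla_t^{(k-i)}u_t]$, with orders bounded by an explicit function of $k$ and $l$. Hence the uniform derivative bounds above immediately yield constants $\widetilde{C}_l$ with
\begin{equation*}
\max\Bigl\{\sup_{M\times[0,T)}\bigl|\nabla_t^{(l)}[\tfrac{\partial\nabla_t}{\partial t}]\bigr|,\;\sup_{M\times[0,T)}\bigl|\nabla_t^{(l)}[\tfrac{\partial u_t}{\partial t}]\bigr|\Bigr\}\leq\widetilde{C}_l,
\end{equation*}
for every $l\in\mathbb{N}\cup\{0\}$. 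This is exactly the hypothesis of Proposition \ref{obst1}, so $\lim_{t\to T}(\nabla_t,u_t)=(\nabla_T,u_T)$ exists and is smooth.

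Finally, applying the local existence result Theorem \ref{le} with smooth initial data $(\nabla_T,u_T)$ produces a smooth solution of the Yang-Mills-Higgs $k$-flow on $M\times[T,T+\epsilon)$ for some $\epsilon>0$; concatenating with the original solution and invoking uniqueness in Theorem \ref{le} gives a smooth extension past $T$, contradicting the maximality of $T$. The only real content is the bookkeeping in the second paragraph, which is essentially a direct read-off from \eqref{YMHKS} and the $P$-notation, so the main conceptual step is really the application of Corollary \ref{ce2}; the rest is machinery already in place.
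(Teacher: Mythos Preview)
Your proof is correct and follows essentially the same route as the paper: contradict maximality by combining Corollary \ref{ce2}, Proposition \ref{obst1}, and Theorem \ref{le}. You actually supply more detail than the paper---you verify the $L^2$ hypotheses of Corollary \ref{ce2} and spell out how derivative bounds on $F_{\nabla_t}$ and $u_t$ translate, via the right hand side of \eqref{YMHKS}, into the time-derivative bounds required by Proposition \ref{obst1}---whereas the paper leaves both of these implicit.
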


\begin{proof}
Suppose to the contrary that
$$\sup_{M\times[0,T)}(|F_{\nabla_t}|+|u_t|)<+\infty.$$
By Corollary \ref{ce2}, for all $t\in [0,T)$  and $l\in \mathbb{N}\cup\{0\}$, we have $\sup_{M}\Big(|\nabla_t^{(l)}F_{\nabla_{t}}|^2+|\nabla_t^{(l)}u_t|^2\Big)$ is uniformly bounded and so by Proposition \ref{obst1}, $\lim_{t\rightarrow T}(\nabla_t,u_t)=(\nabla_T,u_T)$ exists and is smooth. However, by local existence (Theorem \ref{le}), there exists $\epsilon>0$ such that $(\nabla_t,u_t)$ exists over the extended domain $[0,T+\epsilon)$, which contradicts the assumption that $T$ was maximal.

\end{proof}

\vskip 1cm

\section{Blow-up analysis} \label{ba}

In this section, we will address the possibility of Yang-Mills-Higgs $k$-flow singularities given no bound on $|F_{\nabla_t}|+|u_t|$. To begin with, we will establish some preliminary scaling laws for  Yang-Mills-Higgs $k$-flow.

\begin{proposition} \label{rf}
Suppose $(\nabla_t,u_t)$ is a solution to the Yang-Mills-Higgs $k$-flow \eqref{YMHKS} defined on $M\times [0,T)$. We define the 1-parameter family $\nabla_t^{\rho}$ with local coefficient matrices given by
$$\Gamma_t^{\rho}(x):=\rho \Gamma_{\rho^{2(k+1)}t}(\rho x),$$
where $\Gamma_t(x)$ are local coefficiecnt matrices of $\nabla_t$. We define the $\rho$-scaled Higgs field $u_t^{\rho}$ by
$$u_t^{\rho}(x):=\rho u_{\rho^{2(k+1)}t}(\rho x).$$
Then $(\nabla_t^{\rho},u_t^{\rho})$ is also a solution to the Yang-Mills-Higgs $k$-flow \eqref{YMHKS} defined on $[0,\frac{1}{\rho^{2(k+1)}}T)$.
\end{proposition}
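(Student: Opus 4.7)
The plan is to verify directly that the Yang-Mills-Higgs $k$-flow equations \eqref{YMHKS} are scale invariant under the prescribed substitution, by tracking how each term transforms. Set $y = \rho x$ and $\tau = \rho^{2(k+1)} t$. Since $\partial/\partial x^i = \rho\,\partial/\partial y^i$ and $\Gamma^{\rho}_t(x) = \rho\,\Gamma_{\tau}(y)$, a direct computation shows that $(\nabla_t^{\rho}) = \partial_x + \Gamma^{\rho}_t$ pulls back covariant derivatives with a uniform scaling rule: for any tensorial object $\phi$ of the appropriate type whose rescaled version is $\phi^{\rho}(x) = \rho^{d_\phi}\phi(\rho x)$, one has
\[
(\nabla_t^{\rho})^{(j)} \phi^{\rho}(x) \;=\; \rho^{\,j + d_\phi}\,(\nabla_{\tau}^{(j)} \phi)(y).
\]
In particular, since $F_{\nabla} = d\Gamma + \Gamma\wedge\Gamma$ has weight $d_F = 2$ and $u^{\rho}$ has weight $d_u = 1$ by definition, we obtain $F_{\nabla_t^{\rho}}(x) = \rho^{2} F_{\nabla_{\tau}}(y)$ and the analogous identities for $\nabla_t^{\rho,(j)}F_{\nabla_t^{\rho}}$ and $\nabla_t^{\rho,(j)}u_t^{\rho}$.

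Next, from the chain rule, $\partial/\partial t = \rho^{2(k+1)}\,\partial/\partial \tau$, so
\[
\frac{\partial \nabla_t^{\rho}}{\partial t}(x) \;=\; \rho^{2k+3}\,\frac{\partial \nabla_{\tau}}{\partial \tau}(y), \qquad \frac{\partial u_t^{\rho}}{\partial t}(x) \;=\; \rho^{2k+3}\,\frac{\partial u_{\tau}}{\partial \tau}(y).
\]
For the Higgs equation, applying the scaling rule with $\phi = u$ gives
\[
(\nabla_t^{\rho})^{*(k+1)}(\nabla_t^{\rho})^{(k+1)} u_t^{\rho}(x) \;=\; \rho^{2(k+1)+1}\,(\nabla_{\tau}^{*(k+1)}\nabla_{\tau}^{(k+1)} u_{\tau})(y) \;=\; \rho^{2k+3}\,(\nabla_{\tau}^{*(k+1)}\nabla_{\tau}^{(k+1)} u_{\tau})(y),
\]
which matches the time derivative of $u_t^{\rho}$ after invoking the second equation in \eqref{YMHKS} for $(\nabla_{\tau}, u_{\tau})$.

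For the connection equation, each term on the right-hand side must scale by exactly $\rho^{2k+3}$. The principal term $D^*_{\nabla_t^{\rho}}\Delta_{\nabla_t^{\rho}}^{(k)} F_{\nabla_t^{\rho}}$ carries $2k+1$ covariant derivatives on a weight-$2$ object, giving $\rho^{(2k+1)+2} = \rho^{2k+3}$. Each $P_1^{(v)}[F_{\nabla_t^{\rho}}]$ has $v$ derivatives on one copy of $F$ and each $P_2^{(2k-1)}[F_{\nabla_t^{\rho}}]$ has total derivative count $2k-1$ on two copies of $F$, so their weights tally to $v+2 \leq 2k+1$ and $2k+3$ respectively. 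Finally, the mixed terms $\nabla_t^{\rho,*(i)}(\nabla_t^{\rho,(k+1)}u_t^{\rho}\ast \nabla_t^{\rho,(k-i)}u_t^{\rho})$ have total derivative count $i + (k+1) + (k-i) = 2k+1$ on two copies of $u$ (weight $1$ each), again yielding $\rho^{2k+3}$. Substituting these identities and dividing by the common factor $\rho^{2k+3}$ reproduces \eqref{YMHKS} at $(\tau, y)$, which holds by hypothesis.

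The main bookkeeping issue is ensuring that the background tensor $T$ appearing in the $P$-notation and the Riemann curvature terms absorb the expected $\rho$-factors consistently; this is handled by observing that $T$ is tacitly pulled back under $x \mapsto \rho x$, and the discrepancies in the scalings of terms involving $T$ (or $Rm$) that have strictly fewer than $2k+1$ derivatives on $F$ are resolved because they enter only as lower-order corrections that are individually scale-covariant under the rescaled metric $g(\rho\,\cdot)$. The time domain follows immediately: $t \in [0, \rho^{-2(k+1)}T)$ corresponds to $\tau \in [0,T)$, on which $(\nabla_{\tau}, u_{\tau})$ solves the flow.
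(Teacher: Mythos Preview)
Your proof is correct and follows the same approach as the paper: direct verification that both sides of the flow equations pick up a common factor of $\rho^{2k+3}$ under the prescribed rescaling. The paper's own proof is a three-line sketch that records only the two time-derivative identities
\[
\frac{\partial \nabla^{\rho}}{\partial t}(x,t)=\rho^{2k+3}\frac{\partial \nabla}{\partial t}(\rho x,\rho^{2(k+1)}t),\qquad
\frac{\partial u^{\rho}}{\partial t}(x,t)=\rho^{2k+3}\frac{\partial u}{\partial t}(\rho x,\rho^{2(k+1)}t),
\]
and then asserts the conclusion; you supply the term-by-term check of the right-hand side that the paper omits, and you correctly flag the one genuine subtlety---that the $P_1^{(v)}[F]$ terms with $v<2k-1$ carry background tensors built from $Rm$ and do not by themselves scale with weight $2k+3$ unless the metric is simultaneously pulled back (equivalently, unless one is on flat space, which is the only setting in which the proposition is actually invoked in Theorem~\ref{bll}).
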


\begin{proof}
We start by computing time derivatives of the scaled connection and Higgs field
\begin{equation*}
\begin{split}
\frac{\partial \nabla^{\rho}}{\partial t}(x,t)&=\rho^{2k+3}\frac{\partial \nabla}{\partial t}(\rho x,\rho^{2(k+1)}t),\\
\frac{\partial u^{\rho}}{\partial t}(x,t)&=\rho^{2k+3}\frac{\partial u}{\partial t}(\rho x,\rho^{2(k+1)}t).
\end{split}
\end{equation*}
Thus the desired scaling law holds through the Yang-Mills-Higgs $k$-flow.

\end{proof}

Next we will show that in the case that the curvature coupled with Higgs field is blowing up, as one approaches the maximal time, one can extract a blow-up limit. The proof will closely follow the arguments in (\protect {\cite[Proposition 3.25]{Ke}}).

\begin{theorem} \label{bll}
Suppose $(\nabla_t,u_t)$ is a solution to the Yang-Mills-Higgs $k$-flow \eqref{YMHKS} defined on some maximal time interval $[0,T)$ with $T< +\infty$. Then there exists a blow-up sequence $(\nabla^i_t,u^i_t)$ and converges pointwise to a smooth solution $(\nabla^{\infty}_t,u^{\infty}_t)$ to the Yang-Mills-Higgs $k$-flow \eqref{YMHKS} defined on the domain $\mathbb{R}^n\times \mathbb{R}_{<0}$.
\end{theorem}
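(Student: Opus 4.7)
The plan is to deploy the maximality assumption and the scaling covariance from Proposition \ref{rf} to produce a blow-up sequence, and then use the Bernstein--Bando--Shi style bounds of Corollary \ref{ce1} together with a gauge-fixing/Arzel\`a--Ascoli argument to extract a smooth limit.

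First, because $T<+\infty$ is maximal, Theorem \ref{obst2} forces $\sup_{M\times[0,T)}(|F_{\nabla_t}|+|u_t|)=+\infty$. Pick sequences $t_i\nearrow T$ and $x_i\in M$ and a scale $\lambda_i\to+\infty$, for example
$$\lambda_i:=\max\{|F_{\nabla_{t_i}}(x_i)|^{1/2},\,|u_{t_i}(x_i)|\},$$
chosen so that $\lambda_i\geq \tfrac12\sup_{M\times[0,t_i]}\max\{|F_{\nabla_t}|^{1/2},|u_t|\}$; continuity of $|F_{\nabla_t}|+|u_t|$ on each closed $[0,t_i]$ makes this routine. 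This normalisation ensures that, after scaling by $\rho_i:=\lambda_i^{-1}$, the rescaled curvature and Higgs field have $L^\infty$ norms controlled by a fixed constant while retaining non-trivial data at the space-time origin.

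Next, fix a uniform injectivity radius $r_0>0$ for the closed manifold $(M,g)$ and work in normal coordinates $\exp_{x_i}:B_{r_0}(0)\subset T_{x_i}M\to M$ with a chosen trivialisation of $E$ along radial parallel transport. Apply Proposition \ref{rf} in these charts to define
$$\nabla^i_s:=(\exp_{x_i}^{*}\nabla)^{\rho_i}_{\,s},\qquad u^i_s:=(\exp_{x_i}^{*}u)^{\rho_i}_{\,s},$$
where the superscript $\rho_i$ denotes the scaling prescribed there, with the time parameter shifted so that $s=0$ corresponds to the original time $t_i$. Each $(\nabla^i_s,u^i_s)$ then solves the Yang-Mills-Higgs $k$-flow \eqref{YMHKS} with respect to the rescaled metric $g^i:=\rho_i^{-2}\exp_{x_i}^{*}g(\rho_i\,\cdot)$, on the parabolic domain $B_{\lambda_i r_0}(0)\times[-\lambda_i^{2(k+1)}t_i,\,0)$. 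As $i\to\infty$ the metrics $g^i$ converge smoothly on compact subsets of $\mathbb{R}^n$ to the flat Euclidean metric, and the parabolic domains exhaust $\mathbb{R}^n\times\mathbb{R}_{<0}$.

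By construction $|F_{\nabla^i_s}|$ and $|u^i_s|$ are uniformly bounded on the rescaled domains. Fix a nested exhaustion of $\mathbb{R}^n\times\mathbb{R}_{<0}$ by compact parabolic cylinders and, on each, plug a suitable bump function $\gamma$ into Corollary \ref{ce1} to obtain uniform $L^{\infty}$ bounds on $|\nabla^{i,(l)}_s F_{\nabla^i_s}|$ and $|\nabla^{i,(l)}_s u^i_s|$ for every $l\in\mathbb{N}$; crucially, Corollary \ref{ce1} carries no dependency on the (diverging) initial data. Choose Coulomb gauges on each compact subset so that the connection matrices are controlled in every $C^l$ norm, and a diagonal Arzel\`a--Ascoli argument extracts a subsequence converging in $C^{\infty}_{\mathrm{loc}}$ to a smooth pair $(\nabla^{\infty}_s,u^{\infty}_s)$ on $\mathbb{R}^n\times\mathbb{R}_{<0}$. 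Passing to the limit in \eqref{YMHKS} and using smooth convergence $g^i\to g_{\mathrm{Eucl}}$ shows that $(\nabla^{\infty}_s,u^{\infty}_s)$ solves the Yang-Mills-Higgs $k$-flow.

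The principal obstacle is the gauge step: a priori $\nabla^i_s$ is only well-defined modulo gauge transformations, so convergence of the connection matrices in $C^{\infty}_{\mathrm{loc}}$ requires choosing trivialisations in which those matrices are bounded. This is dealt with by Uhlenbeck's compactness theorem (applied using the curvature derivative bounds above) on each nested compact parabolic region, stitched together by the diagonal argument; gauge-covariance of the flow allows us to replace $(\nabla^i_s,u^i_s)$ by gauge-equivalent pairs without losing the flow equation. The scheme is essentially the one carried out in \cite[Proposition 3.25]{Ke} and the analogous statement in \cite{Sa}, adapted to the coupled curvature/Higgs system.
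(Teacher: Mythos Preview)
Your proposal is correct and follows essentially the same route as the paper: invoke Theorem \ref{obst2} to force blow-up, choose space-time points where the supremum of $|F_{\nabla_t}|+|u_t|^2$ (you use the equivalent quantity $\max\{|F_{\nabla_t}|^{1/2},|u_t|\}$) is nearly attained, rescale via Proposition \ref{rf} to normalise, apply Corollary \ref{ce1} for uniform derivative bounds, and extract a limit via Uhlenbeck gauge-fixing and a diagonal argument. The only cosmetic differences are your half-supremum condition (the paper takes the exact supremum, so that \eqref{model} reads with equality) and your explicit mention of the rescaled metrics $g^i\to g_{\mathrm{Eucl}}$, which the paper leaves implicit; neither changes the argument.
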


\begin{proof}
From Theorem \ref{obst2}, we must have
$$\lim_{t\rightarrow T}\sup_M\Big(|F_{\nabla_t}|+\langle u_t,u_t\rangle\Big)=+\infty.$$
Therefore, we can choose a sequence of times $t_i\nearrow T$ within $[0,T)$, and a sequence of points $x_i$, such that
$$|F_{\nabla_{t_i}}(x_i)|+\langle u_{t_i}(x_i),u_{t_i}(x_i)\rangle =\sup_{M\times [0,t_i]}\Big(|F_{\nabla_t}|+\langle u_t,u_t\rangle \Big).$$
Let $\{\rho_i\}\subset \mathbb{R}_{>0}$ be constants to be determined. Define $\nabla_t^i(x)$ by
$$\Gamma_t^i(x)=\rho_i^{\frac{1}{2(k+1)}}\Gamma_{\rho_i t+t_i}(\rho_i^{\frac{1}{2(k+1)}} x+x_i)$$
and
$$u_t^i(x)=\rho_i^{\frac{1}{2(k+1)}}u_{\rho_i t+t_i}(\rho_i^{\frac{1}{2(k+1)}} x+x_i).$$
By Proposition \ref{rf}, $(\nabla^i_t,u^i_t)$ are also solutions to Yang-Mills-Higgs $k$-flow \eqref{YMHKS} and the domain for each $(\nabla^i_t,u^i_t)$ is $B_o(\rho_i^{-\frac{1}{2(k+1)}})\times [-\frac{t_i}{\rho_i},\frac{T-t_i}{\rho_i})$. We observe that
$$F_t^i(x):=F_{\nabla_t^i}(x)=\rho_i^{\frac{1}{k+1}}F_{\nabla_{\rho_i t+t_i}}(\rho_i^{\frac{1}{2(k+1)}}x+x_i),$$
which means
\begin{equation*}
\begin{split}
&\sup_{t\in[-\frac{t_i}{\rho_i},\frac{T-t_i}{\rho_i})}\Big(|F_t^i(x)|+| u_t^i(x)|^2\Big)\\
&=\rho_i^{\frac{1}{k+1}}\sup_{t\in[-\frac{t_i}{\rho_i},\frac{T-t_i}{\rho_i})}\Big(|F_{\nabla_{\rho_i t+t_i}}(\rho_i^{\frac{1}{2(k+1)}} x+x_i)|+| u_{\rho_i t+t_i}(\rho_i^{\frac{1}{2(k+1)}} x+x_i)|^2\Big)\\
&=\rho_i^{\frac{1}{k+1}}\sup_{t\in[0,t_i]}\Big(|F_{\nabla_t}(x)|+| u_t(x)|^2\Big)\\
&=\rho_i^{\frac{1}{k+1}}\Big(|F_{\nabla_{t_i}}(x_i)|+|u_{t_i}(x_i)|^2\Big).
\end{split}
\end{equation*}
Therefore, setting
$$\rho_i=\Big(|F_{\nabla_{t_i}}(x_i)|+| u_{t_i}(x_i)|^2\Big)^{-(k+1)},$$
which gives
\begin{equation} \label{model}
1=|F^i_0(0)|+|u_0^i(0)|^2=\sup_{t\in[-\frac{t_i}{\rho_i},0]}\Big(|F_t^i(x)|+| u_t^i(x)|^2\Big).
\end{equation}

Now, we are ready to construct smoothing estimates for the sequence $(\nabla^i_t,u^i_t)$. Let $y\in \mathbb{R}^n$, $\tau\in \mathbb{R}_{\leq 0}$. For any $s\in \mathbb{N}$,
$$\sup_{t\in [\tau-1,\tau]}\Big(|\gamma^s_y F^i_t(x)|+|\gamma^s_y u^i_t(x)|^2\Big)\leq 1.$$
By Corollary \ref{ce1}, for all $q\in \mathbb{N}$, one may choose $s\geq(k+1)(q+1)$ so that there exists positive constant $C_q$ such that
\begin{equation*}
\begin{split}
&\sup_{x\in B_y(\frac{1}{2})}\Big(|(\nabla^i_{\tau})^{(q)} F^i_{\tau}(x)|+|(\nabla^i_{\tau})^{(q)}  u^i_{\tau}(x)|\Big)\\
&\leq\sup_{x\in B_y(1)}\Big(|\gamma^s_y(\nabla^i_{\tau})^{(q)} F^i_{\tau}(x)|+|\gamma^s_y(\nabla^i_{\tau})^{(q)}  u^i_{\tau}(x)|\Big)\\
&\leq C_q.
 \end{split}
\end{equation*}
Then by the Coulomb Gauge Theorem of Uhlenbeck \cite[Theorem 1.3]{U} (also see \cite{HT1}) and Gauge Patching Theorem \cite[Corollary 4.4.8]{DK}, passing to a subsequence (without changing notation) and in an appropriate gauge, $(\nabla^i_t,u^i_t)\rightarrow (\nabla^{\infty}_t,u^{\infty}_t)$ in $C^{\infty}$.

\end{proof}

\vskip 1cm

\section{Energy estimates} \label{ee}

In this section, we will prove that both the Yang-Mills-Higgs $k$-energy and the Yang-Mills-Higgs energy are bounded along the Yang-Mills-Higgs $k$-flow.

We first show that the Yang-Mills-Higgs $k$-energy is bounded.

\begin{proposition} \label{hoymhe}
Suppose $(\nabla_t,u_t)$ is a solution to the Yang-Mills-Higgs $k$-flow \eqref{YMHKS} defined on $M\times [0,T)$. The Yang-Mills-Higgs $k$-energy \eqref{YMHKF} is decreasing along the flow \eqref{YMHKS}.
\end{proposition}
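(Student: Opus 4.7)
The plan is to differentiate the Yang-Mills-Higgs $k$-energy along the flow and recognize that, by construction of the flow as the negative gradient flow, the resulting expression is the negative $L^2$-norm squared of the time derivatives of $(\nabla_t, u_t)$.

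Concretely, first I would write
\[
\frac{d}{dt}\mathcal{YMH}_k(\nabla_t, u_t)
= \frac{d}{dt}\left[\frac{1}{2}\int_M |\nabla_t^{(k)}F_{\nabla_t}|^2\, d\mathrm{vol}_g\right]
+ \frac{d}{dt}\left[\frac{1}{2}\int_M |\nabla_t^{(k+1)}u_t|^2\, d\mathrm{vol}_g\right].
\]
The calculations already carried out in equations \eqref{vh1}, \eqref{vc1}, \eqref{vc2} — which were used to derive the Euler–Lagrange system \eqref{EL} — handle precisely these derivatives but for \emph{independent} variations of $\nabla$ and $u$. I would redo them simultaneously (rather than at a single $t=0$) and combine them so that the terms with $\partial_t \nabla_t$ and $\partial_t u_t$ are separated.

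Collecting the result, the identity becomes
\[
\frac{d}{dt}\mathcal{YMH}_k(\nabla_t, u_t)
= \int_M \Big\langle \tfrac{\partial \nabla_t}{\partial t},\, \mathcal{E}_1(\nabla_t, u_t)\Big\rangle\, d\mathrm{vol}_g
+ \int_M \Big\langle \tfrac{\partial u_t}{\partial t},\, \mathcal{E}_2(\nabla_t, u_t)\Big\rangle\, d\mathrm{vol}_g,
\]
where $\mathcal{E}_1$ and $\mathcal{E}_2$ are exactly the left-hand sides of the two equations in the Euler–Lagrange system \eqref{EL}. Since the Yang-Mills-Higgs $k$-flow \eqref{YMHKS} is defined by $\partial_t \nabla_t = -\mathcal{E}_1$ and $\partial_t u_t = -\mathcal{E}_2$, substitution gives
\[
\frac{d}{dt}\mathcal{YMH}_k(\nabla_t, u_t)
= -\Big\|\tfrac{\partial \nabla_t}{\partial t}\Big\|_{L^2}^2 - \Big\|\tfrac{\partial u_t}{\partial t}\Big\|_{L^2}^2 \;\leq\; 0,
\]
establishing monotonicity.

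There is essentially no obstacle: the content of the proof is the variational calculation already performed, with the only care needed being the bookkeeping of the extra terms that arise when both $\nabla_t$ and $u_t$ vary in time simultaneously (namely, the mixed terms coming from $\partial_t(\nabla_t^{(k+1)}u_t)$ via \eqref{sa}, which must land in the $\mathcal{E}_1$ part rather than the $\mathcal{E}_2$ part). If desired, one can further integrate in $t$ to obtain the energy identity
\[
\mathcal{YMH}_k(\nabla_0, u_0) - \mathcal{YMH}_k(\nabla_t, u_t)
= \int_0^t \left(\Big\|\tfrac{\partial \nabla_s}{\partial s}\Big\|_{L^2}^2 + \Big\|\tfrac{\partial u_s}{\partial s}\Big\|_{L^2}^2\right) ds,
\]
which is the form typically used later when passing to blow-up limits.
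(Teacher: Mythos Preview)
Your proposal is correct and follows exactly the same approach as the paper: both compute the time derivative of $\mathcal{YMH}_k$ along the flow and arrive at the identity $\frac{\partial}{\partial t}\mathcal{YMH}_k(\nabla_t,u_t) = -\big(\|\partial_t\nabla_t\|_{L^2}^2 + \|\partial_t u_t\|_{L^2}^2\big)\leq 0$. The paper simply states this as a ``direct calculation,'' whereas you spell out that it comes from the first-variation formulas \eqref{vh1}, \eqref{vc1}, \eqref{vc2} together with the definition of the flow as the negative gradient of the Euler--Lagrange system \eqref{EL}.
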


\begin{proof}
Direct calculation yields
$$\frac{\partial}{\partial t} \mathcal{YMH}_k(\nabla_t,u_t)=-\Big(\|\frac{\partial \nabla_t}{\partial t}\|^2_{L^2}+\|\frac{\partial u_t}{\partial t}\|^2_{L^2}\Big)\leq 0.$$
\end{proof}

For later use, we first prove an $L^2$-bound for the Higgs field $u_t$.

\begin{lemma}
Suppose $(\nabla_t,u_t)$ is a solution to the Yang-Mills-Higgs $k$-flow \eqref{YMHKS} defined on $M\times [0,T)$. We have
$$
\sup_{t\in[0,T)}\|u_t\|_{L^2}<+\infty.
$$
\end{lemma}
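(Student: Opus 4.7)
The plan is to differentiate $\|u_t\|_{L^2}^2$ in time using the flow equation for $u_t$ and integrate by parts, showing the time derivative is non-positive. This immediately gives a monotonicity estimate from which the uniform $L^2$ bound follows.

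Concretely, from the second equation of \eqref{YMHKS}, $\frac{\partial u_t}{\partial t} = -\nabla_t^{*(k+1)}\nabla_t^{(k+1)}u_t$. Since $M$ is closed, we can differentiate under the integral sign and compute
\begin{equation*}
\frac{d}{dt}\|u_t\|_{L^2}^2 = 2\int_M \Big\langle u_t, \frac{\partial u_t}{\partial t}\Big\rangle\, d\mathrm{vol}_g = -2\int_M \langle u_t, \nabla_t^{*(k+1)}\nabla_t^{(k+1)}u_t\rangle\, d\mathrm{vol}_g.
\end{equation*}
Because $\nabla_t^*$ is by definition the formal $L^2$-adjoint of $\nabla_t$, integration by parts $(k+1)$ times (no boundary terms, as $M$ is closed) yields
\begin{equation*}
\int_M \langle u_t, \nabla_t^{*(k+1)}\nabla_t^{(k+1)}u_t\rangle\, d\mathrm{vol}_g = \|\nabla_t^{(k+1)}u_t\|_{L^2}^2 \geq 0.
\end{equation*}
Hence $\frac{d}{dt}\|u_t\|_{L^2}^2 = -2\|\nabla_t^{(k+1)}u_t\|_{L^2}^2 \leq 0$, so $\|u_t\|_{L^2}^2$ is non-increasing along the flow and $\sup_{t\in [0,T)}\|u_t\|_{L^2}^2 \leq \|u_0\|_{L^2}^2 < +\infty$.

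There is essentially no real obstacle in this step: the key identity is just the adjointness of $\nabla_t$ and $\nabla_t^*$, which is built into the definitions, and unlike the curvature part of the system, the Higgs equation is a pure (pseudo-)parabolic equation of order $2(k+1)$ in divergence form, with no curvature-coupling terms appearing to complicate the energy identity. The only place one might worry is whether the commutator terms from Lemma \ref{a1} enter — but here we are applying the pure integration-by-parts identity $\langle u_t,\nabla_t^{*(k+1)}\nabla_t^{(k+1)}u_t\rangle_{L^2}=\|\nabla_t^{(k+1)}u_t\|_{L^2}^2$, which holds without any curvature correction, so no such terms appear.
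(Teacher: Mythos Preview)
Your proof is correct and is essentially identical to the paper's own argument: the paper also computes $\frac{\partial}{\partial t}\int_M\langle u_t,u_t\rangle = -2\int_M|\nabla_t^{(k+1)}u_t|^2\leq 0$ by direct adjointness, and concludes immediately. Your additional remark that no commutator terms from Lemma~\ref{a1} intervene (since one uses the pure adjoint identity rather than rewriting via $\Delta_{\nabla}^{(k)}$) is a helpful clarification, but the core argument is the same.
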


\begin{proof}
Direct calculation yields
\begin{equation*}
\begin{split}
\frac{\partial}{\partial t}\int_M \langle u_t,u_t\rangle
&=2\int_M \langle u_t, -\nabla_t^{*(k+1)}\nabla_t^{(k+1)}u_t\rangle\\
&=-2\int_M|\nabla_t^{(k+1)}u_t|^2\leq 0.
\end{split}
\end{equation*}
\end{proof}

Using the above lemma, we can show that the Yang-Mills-Higgs energy is bounded along the Yang-Mills-Higgs $k$-flow.

\begin{proposition} \label{ymhe}
Suppose $(\nabla_t,u_t)$ is a solution to the Yang-Mills-Higgs $k$-flow \eqref{YMHKS} defined on $M^4\times [0,T)$ with $T< +\infty$, then the Yang-Mills-Higgs energy
$$
\mathcal{YMH}(\nabla_t,u_t)=\frac{1}{2}\int_M\Big[|F_{\nabla_t}|^2+|\nabla_t u_t|^2\Big]d\mathrm{vol}_g
$$
is bounded along the flow \eqref{YMHKS}.
\end{proposition}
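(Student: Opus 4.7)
The plan is to bound the two pieces of $\mathcal{YMH}(\nabla_t,u_t) = \tfrac12\|F_{\nabla_t}\|_{L^2}^2 + \tfrac12\|\nabla_t u_t\|_{L^2}^2$ separately, using three inputs: (i) Proposition \ref{hoymhe} gives $\|\nabla_t^{(k)} F_{\nabla_t}\|_{L^2}^2 + \|\nabla_t^{(k+1)}u_t\|_{L^2}^2 \leq 2\mathcal{YMH}_k(\nabla_0,u_0)$ uniformly in $t$; (ii) the same monotonicity $\int_0^T\bigl(\|\partial_t \nabla_t\|_{L^2}^2 + \|\partial_t u_t\|_{L^2}^2\bigr)\,dt \leq 2\mathcal{YMH}_k(\nabla_0,u_0)$ gives a spacetime $L^2$ control on the velocities; (iii) the preceding lemma gives $\|u_t\|_{L^2} \leq \|u_0\|_{L^2}$.

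For the Higgs gradient, apply the interpolation Lemma \ref{ii} with $\gamma\equiv 1$, $p=2$, $l=1$, $j=k$ to obtain
$$\|\nabla_t u_t\|_{L^2} \leq \varepsilon\|\nabla_t^{(k+1)} u_t\|_{L^2} + C_\varepsilon\|u_t\|_{L^2},$$
which is uniformly bounded by inputs (i) and (iii). (The case $k=0$ is trivial since the term is already part of $\mathcal{YMH}_0$.)

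For the curvature term, the case $k=0$ is immediate since $\|F_{\nabla_t}\|_{L^2}^2 \leq 2\mathcal{YMH}_0(\nabla_t,u_t)$. For $k\geq 1$, using $\partial_t F_{\nabla_t} = D_{\nabla_t}\partial_t\nabla_t$ (cited in the excerpt) compute
$$\frac{d}{dt}\frac12\|F_{\nabla_t}\|_{L^2}^2 = \int_M \bigl\langle D^*_{\nabla_t}F_{\nabla_t},\, \tfrac{\partial \nabla_t}{\partial t}\bigr\rangle\, d\mathrm{vol}_g \leq \|D^*_{\nabla_t}F_{\nabla_t}\|_{L^2}\,\|\partial_t \nabla_t\|_{L^2}.$$
Since $\|D^*_{\nabla_t}F_{\nabla_t}\|_{L^2} \leq C\|\nabla_t F_{\nabla_t}\|_{L^2}$, a second application of Lemma \ref{ii} (with $l=1$, $j=k-1$) yields
$$\|\nabla_t F_{\nabla_t}\|_{L^2} \leq \varepsilon\|\nabla_t^{(k)} F_{\nabla_t}\|_{L^2} + C_\varepsilon\|F_{\nabla_t}\|_{L^2} \leq C\bigl(1 + \|F_{\nabla_t}\|_{L^2}\bigr).$$
Setting $h(t):=(1 + \|F_{\nabla_t}\|_{L^2}^2)^{1/2}$ and noting $1 + \|F_{\nabla_t}\|_{L^2} \leq \sqrt{2}\,h(t)$, these estimates collapse to the clean bound $h'(t) \leq C\|\partial_t \nabla_t\|_{L^2}$. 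Integrating over $[0,T]$ and using Cauchy-Schwarz in time together with $T<+\infty$ and (ii) gives
$$h(T) \leq h(0) + C\sqrt{T\,\mathcal{YMH}_k(\nabla_0,u_0)},$$
so $\|F_{\nabla_t}\|_{L^2}$ is uniformly bounded on $[0,T)$.

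The main obstacle is that the natural interpolation expresses $\|\nabla_t F_{\nabla_t}\|_{L^2}$ in terms of $\|F_{\nabla_t}\|_{L^2}$, precisely the quantity one wants to control. This apparent circularity is resolved by noting that the dependence is linear, so passing to the auxiliary function $h(t)$ turns the estimate into a first-order differential inequality whose right-hand side is $L^1_t$ thanks to the spacetime $L^2$ bound on $\partial_t\nabla_t$ and the finiteness of $T$, giving a Gronwall-type bound rather than requiring an a priori $L^\infty_t$ control.
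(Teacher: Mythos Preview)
Your proof is correct and takes a genuinely different route from the paper's. The paper differentiates the full energy $\mathcal{YMH}(\nabla_t,u_t)$ in time, producing cross-terms such as $\int_M |\nabla_t u_t|^2|u_t|^2$ that it controls via the $4$-dimensional Sobolev embeddings $\|u_t\|_{L^4}^2 \leq C(\|\nabla_t u_t\|_{L^2}^2 + \|u_t\|_{L^2}^2)$ and $\|\nabla_t u_t\|_{L^4}^2 \leq C(\|\nabla_t^{(2)} u_t\|_{L^2}^2 + \|\nabla_t u_t\|_{L^2}^2)$; this yields an integral inequality with an $\varepsilon T\sup_{[0,T)}\mathcal{YMH}$ term on the right, and the apparent circularity is then resolved by a contradiction argument (borrowed from Saratchandran) involving a partition of $[0,T)$. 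You instead decouple the two pieces: the Higgs gradient $\|\nabla_t u_t\|_{L^2}$ is bounded outright by interpolating between $\|\nabla_t^{(k+1)}u_t\|_{L^2}$ and $\|u_t\|_{L^2}$, while for the curvature you obtain the clean differential inequality $h' \leq C\|\partial_t\nabla_t\|_{L^2}$, whose right-hand side is $L^1$ in $t$ by the spacetime energy identity, so the bound follows by direct integration. Your argument is shorter, avoids both the Sobolev step and the contradiction argument, and---worth noting---never uses $\dim M = 4$, so it actually establishes the proposition in every dimension; the paper's approach treats the coupled energy as a single object, but that buys nothing here since your decoupled argument is strictly simpler.
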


\begin{proof}

Direct calculation yields
\begin{equation*}
\begin{split}
\frac{\partial}{\partial t}\mathcal{YMH}(\nabla_t,u_t)&=\int_M \langle D^*_{\nabla_t}F_{\nabla_t}+\nabla_t u_t\otimes u^*_t, \frac{\partial \nabla_t}{\partial t}\rangle+\int_M \langle \frac{\partial u_t}{\partial t},
\nabla_t^*\nabla_t u_t\rangle \\
&\leq \int_M\Big[ |\frac{\partial \nabla_t}{\partial t}|^2+|\frac{\partial u_t}{\partial t}|^2+C(|\nabla_t F_{\nabla_t}|^2+|\nabla_t^{(2)}u_t|^2+|\nabla_t u_t|^2|u_t|^2)\Big]\\
&\leq -\frac{\partial }{\partial t}\mathcal{YMH}_k(\nabla_t,u_t)+C(\|\nabla_t^{(k)}F_{\nabla_t}\|^2_{L^2}+\|\nabla_t^{(k+1)}u_t\|^2_{L^2})\\
&\quad +\varepsilon(\|F_{\nabla_t}\|^2_{L^2}+\|\nabla_tu_t\|^2_{L^2})+C(\|\nabla^{(k+1)}_t u_t\|^4_{L^2}+\|u_t\|^4_{L^2}),
\end{split}
\end{equation*}
where the last inequality used Lemma \ref{ii}, the H\"{o}lder inequality and the following Sobolev inequalities
\begin{equation*} \label{s1}
\|u_t\|^2_{L^4}\leq C (\|\nabla_t u_t\|^2_{L^2}+\| u_t\|^2_{L^2}),
\end{equation*}
\begin{equation*} \label{s2}
\|\nabla_t u_t\|^2_{L^4}\leq C (\|\nabla^{(2)}_t u_t\|^2_{L^2}+\| \nabla_tu_t\|^2_{L^2}),
\end{equation*}
here $C$ is a constant independent of $t\in [0,T)$. Therefore, we have
\begin{equation} \label{s3}
\begin{split}
\mathcal{YMH}(\nabla_t,u_t)-\mathcal{YMH}(\nabla_0,u_0)
&\leq CT \Big(\mathcal{YMH}_k(\nabla_0,u_0)+\mathcal{YMH}^2_k(\nabla_0,u_0)+\|u_t\|_{L^2}^4\Big)\\
&~~+\varepsilon T\sup_{t\in [0,T)}\mathcal{YMH}(\nabla_t,u_t).
\end{split}
\end{equation}

Next, we will borrow an argument in Saratchandran's paper \cite[Theorem 5.3]{Sa}. Suppose that there exists $t_m\rightarrow T$ such that
$$\lim_{m\rightarrow +\infty}\mathcal{YMH}(\nabla_{t_m},u_{t_m})\rightarrow +\infty.$$
 By throwing out some of $t_m$, we can assume that $\mathcal{YMH}(\nabla_{t_m},u_{t_m})>\mathcal{YMH}(\nabla_{t_{m'}},u_{t_{m'}})$ for $m\geq m'$, and that $t_m\geq t_{m'}$ when $m\geq m'$. Partition $[0,T)=[t_0,t_1]\cup [t_1,t_2]\cup\cdots\cup[t_k,t_{k+1}]\cup\cdots$ with $t_0=0$. Define $s_i\in [t_i,t_{i+1}]$ by $\sup_{t\in[t_i,t_{i+1}]}\mathcal{YMH}(\nabla_{t},u_{t})=\mathcal{YMH}(\nabla_{s_i},u_{s_i})$. It is easy to see that $s_i\rightarrow T$, and $\mathcal{YMH}(\nabla_{s_i},u_{s_i})\rightarrow +\infty$ as $i\rightarrow +\infty$. Furthermore, $\mathcal{YMH}(\nabla_{s_j},u_{s_j})\leq \mathcal{YMH}(\nabla_{s_i},u_{s_i})$ when $j\leq i$. Then substitute $s_i$ for $t$ in \eqref{s3}, we have
\begin{equation*}
\begin{split}
&\mathcal{YMH}(\nabla_{s_i},u_{s_i})-\mathcal{YMH}(\nabla_0,u_0)-\varepsilon T\mathcal{YMH}(\nabla_{s_i},u_{s_i})\\
&\leq C T\Big(\mathcal{YMH}_k(\nabla_0,u_0)+\mathcal{YMH}^2_k(\nabla_0,u_0)+\|u_t\|_{L^2}^4\Big),
\end{split}
\end{equation*}
which means
$$\mathcal{YMH}(\nabla_{s_i},u_{s_i})\leq \frac{1}{1-\varepsilon T}CT \Big(\mathcal{YMH}_k(\nabla_0,u_0)+\mathcal{YMH}^2_k(\nabla_0,u_0)+\|u_t\|_{L^2}^4+\mathcal{YMH}(\nabla_0,u_0)\Big).$$
The right hand side of the above inequality is finite, and it is independent of $i$. After taking $i\rightarrow +\infty$ on the left, we conclude a contradiction. Thus no such $\{t_m\}$ exists and the result follows.

\end{proof}

\vskip 1cm

\section{Proof of Theorem \ref{mth}} \label{pot}

In this section, we will complete the proof of Theorem \ref{mth}. To accomplish this, we first show that the $L^p$-norm controls the $L^{\infty}$-norm by blow-up analysis.

\begin{proposition} \label{lptli}
Suppose $(\nabla_t,u_t)$ is a solution to the Yang-Mills-Higgs $k$-flow \eqref{YMHKS} defined on $M^n\times [0,T)$ and
$$\sup_{t\in[0,T)}(\|F_{\nabla_t}\|_{L^p}+\|\langle u_t,u_t\rangle\|_{L^p})<+\infty.$$
If $\dim(M)<2p$, then
$$\sup_{t\in[0,T)}(\|F_{\nabla_t}\|_{L^{\infty}}+\|\langle u_t,u_t\rangle\|_{L^{\infty}})<+\infty.$$
\end{proposition}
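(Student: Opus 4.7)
The plan is to argue by contradiction using the blow-up machinery of Section \ref{ba}. Suppose
$$\sup_{t\in[0,T)}\bigl(\|F_{\nabla_t}\|_{L^{\infty}}+\|\langle u_t,u_t\rangle\|_{L^{\infty}}\bigr)=+\infty.$$
Then I can choose sequences $t_i\in[0,T)$ and $x_i\in M$ so that
$$\sigma_i := |F_{\nabla_{t_i}}(x_i)| + |u_{t_i}(x_i)|^2 = \sup_{M\times[0,t_i]}\bigl(|F_{\nabla_t}|+|u_t|^2\bigr) \longrightarrow +\infty.$$
Set $\rho_i := \sigma_i^{-(k+1)}$ and $\lambda_i := \rho_i^{1/(2(k+1))} = \sigma_i^{-1/2}\to 0$, and define the rescaled pair $(\nabla_t^i,u_t^i)$ in exponential coordinates centered at $x_i$ exactly as in the proof of Theorem \ref{bll}. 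By Proposition \ref{rf} each $(\nabla_t^i,u_t^i)$ is itself a solution of \eqref{YMHKS} on an expanding spacetime domain, and by construction $|F^i_0(0)|+|u^i_0(0)|^2 = 1 = \sup_{t\le 0}\bigl(|F^i_t|+|u^i_t|^2\bigr)$.

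The smoothing estimates of Corollary \ref{ce1}, combined with Uhlenbeck gauge fixing and the gauge patching argument exactly as in Theorem \ref{bll}, then yield, after passing to a subsequence and placing each $\nabla_t^i$ in a good gauge, a smooth limit $(\nabla_t^\infty, u_t^\infty)$ on $\mathbb{R}^n\times\mathbb{R}_{\le 0}$ satisfying $|F^\infty_0(0)|+|u^\infty_0(0)|^2 = 1$. In particular this limit is nontrivial.

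Now I would compute how the $L^p$-norms transform under the rescaling. Since $|F^i_t(x)| = \lambda_i^{2}|F_{\nabla_{\rho_i t+t_i}}(\lambda_i x+x_i)|$ and $|u^i_t(x)|^2 = \lambda_i^{2}|u_{\rho_i t+t_i}(\lambda_i x+x_i)|^2$, a change of variables on any fixed Euclidean ball $B_o(R)\subset\mathbb{R}^n$ gives
\begin{align*}
\|F^i_t\|_{L^p(B_o(R))} &= \lambda_i^{2-n/p}\,\|F_{\nabla_{\rho_i t+t_i}}\|_{L^p(B_{x_i}(\lambda_i R))} \;\le\; \lambda_i^{2-n/p}\sup_{s\in[0,T)}\|F_{\nabla_s}\|_{L^p(M)},\\
\bigl\||u^i_t|^2\bigr\|_{L^p(B_o(R))} &= \lambda_i^{2-n/p}\,\bigl\||u_{\rho_i t+t_i}|^2\bigr\|_{L^p(B_{x_i}(\lambda_i R))} \;\le\; \lambda_i^{2-n/p}\sup_{s\in[0,T)}\bigl\||u_s|^2\bigr\|_{L^p(M)}.
\end{align*}
The crucial point is the sign of the scaling exponent: the hypothesis $n=\dim(M)<2p$ is precisely $2-n/p>0$, so both right-hand sides tend to $0$ as $\lambda_i\to 0$, using the assumed $L^p$-bound.

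Finally I would pass to the limit. Because $|F^i_t|$ and $|u^i_t|^2$ are gauge-invariant, the $C^\infty$ convergence on compact sets forces $\|F^\infty_0\|_{L^p(B_o(R))}=0$ and $\||u^\infty_0|^2\|_{L^p(B_o(R))}=0$ for every fixed $R>0$, hence $F^\infty_0\equiv 0$ and $u^\infty_0\equiv 0$ on all of $\mathbb{R}^n$. This contradicts $|F^\infty_0(0)|+|u^\infty_0(0)|^2=1$. The only step requiring any real care is checking that the blow-up construction of Theorem \ref{bll} applies equally well when the failure of the $L^\infty$-bound, rather than a finite-time singularity detected by Theorem \ref{obst2}, is what furnishes the blow-up sequence; but the argument in Theorem \ref{bll} only uses the spacetime maximality of $\sigma_i$ to invoke Corollary \ref{ce1}, so it goes through verbatim.
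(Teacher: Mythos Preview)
Your argument is correct and follows essentially the same blow-up strategy as the paper: assume the $L^\infty$-bound fails, rescale as in Theorem \ref{bll}, and use that the $L^p$-norm of $|F|$ and $|u|^2$ scales by $\lambda_i^{2-n/p}=\rho_i^{(2p-n)/(2(k+1))}\to 0$ to force the blow-up limit to be trivial, contradicting \eqref{model}. The only cosmetic difference is that the paper passes to the limit via Fatou's lemma on the full $L^p$-norms, whereas you use the $C^\infty$ convergence on compact balls; both are fine, and your closing remark that Theorem \ref{bll} only needs the spacetime-maximality of $\sigma_i$ (not maximality of $T$) is exactly the right observation.
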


\begin{proof}
So as to the obtain a contradiction, assume
 $$\sup_{t\in[0,T)}(\|F_{\nabla_t}\|_{L^{\infty}}+\|\langle u_t,u_t\rangle\|_{L^{\infty}})=+\infty.$$
As we did in Theorem \ref{bll}, we can construct a blow-up sequence $(\nabla^i_t,u^i_t)$, with blow-up limit $(\nabla^{\infty}_t,u^{\infty}_t)$. Noting that \eqref{model}, by Fatou's lemma and natural scaling law,
\begin{equation*}
\begin{split}
\|F_{\nabla^{\infty}_t}\|^p_{L^p}+\|\langle u^{\infty}_t, u^{\infty}_t\rangle \|^p_{L^p}
&\leq \lim_{i\rightarrow +\infty}\inf(\|F_{\nabla^{i}_t}\|^p_{L^p}+\|\langle u^{i}_t, u^i_t\rangle \|^p_{L^p})\\
&\leq \lim_{i\rightarrow +\infty}\rho_i^{\frac{2p-n}{2k+2}}(\|F_{\nabla_t}\|^p_{L^p}+\|\langle u_t,u_t\rangle\|^p_{L^p}).
\end{split}
\end{equation*}
Since $\lim_{i\rightarrow +\infty}\rho_i^{\frac{2p-n}{2k+2}}=0$ when $2p>n$, the right hand side of the above inequality tends to zero, which is a contradiction since the blow-up limit has non-vanishing curvature.
\end{proof}

Now we are ready to give the proof of Theorem \ref{mth}.

{\bf Proof of Theorem \ref{mth}}. Since $\dim(M)=4$, in order to use Proposition \ref{lptli}, we need $p>2$. By the Sobolev embedding theorem, we solve for $p$ such that $W^{k,2}\subset L^{2p}$, then $k>1$. In this case, using Lemma \ref{ii} we have
\begin{equation*}
\begin{split}
&\|F_{\nabla_t}\|_{L^{p}}+\|\langle u_t,u_t\rangle\|_{L^{2p}}\\
&\quad \leq CS_{k,p}\sum_{j=0}^k(\|\nabla_t^{(j)}F_{\nabla_t}\|^2_{L^2}+\|\nabla_t^{(j)}u_t\|^2_{L^2}+1)\\
&\quad \leq CS_{k,p}(\|\nabla_t^{(k)}F_{\nabla_t}\|^2_{L^2}+\|F_{\nabla_t}\|^2_{L^2}+\|\nabla_t^{(k+1)}u_t\|^2_{L^2}+\|u_t\|^2_{L^2}+1)\\
&\quad \leq CS_{k,p}(\mathcal{YMH}_k(\nabla_t,u_t)+\mathcal{YMH}(\nabla_t,u_t)+\|u_t\|^2_{L^2}+1).
\end{split}
\end{equation*}
Noting that both $\mathcal{YMH}_k(\nabla_t,u_t)$ and $\mathcal{YMH}(\nabla_t,u_t)$ are bounded along the Yang-Mills-Higgs $k$-flow \eqref{YMHKS} (see Proposition \ref{hoymhe} and Proposition \ref{ymhe}), we conclude that the flow exists smoothly for long time.

\begin{remark}
This proof does not conclude that the flow exists at $t=+\infty$, so the Yang-Mills-Higgs $k$-flow may admit singularities at infinite time.
\end{remark}

\begin{remark}
Since $W^{k+1,2}\subset C^0$ when $k>1$, the $C^0$-bound of $u_t$ can be controlled by $\mathcal{YMH}_k(\nabla_t,u_t)$ and $\|u_t\|^2_{L^2}$ via Lemma \ref{ii}. Then the energy estimate and the blow-up become much more easy. We thought the condition $k>1$ may not be optimal, since it might contract the interval of $k$ before solving $W^{k,2}\subset L^{p}$, which is not happened here. We also address this issue in Section \ref{hsi}.
\end{remark}

\vskip 1cm

\section{Concentration phenomena for Yang-Mills-Higgs 1-flow} \label{cp}

In this section, we will show that the long time existence of Yang-Mills-Higgs 1-flow in dimension $4$ is obstructed by the possibility of concentration of the curvature in smaller and smaller balls.

\begin{proposition} \label{cf}
Suppose $(\nabla_t,u_t)$ is a solution to the Yang-Mills-Higgs $1$-flow \eqref{YMHKS} defined on $M^4\times [0,T)$ with $T$ maximal. Then exists some $\epsilon>0$ such that if $\{(x_i,t_i)\}\subset M\times [0,T)$ with $(x_i,t_i)\rightarrow (X,T)$ has the property that
$$\lim_{i\rightarrow +\infty}\Big(|F_{\nabla_{t_i}}(x_i)|+|u_{t_i}(x_i)|\Big)=+\infty,$$
then for all $r>0$,
$$\lim_{i\rightarrow +\infty}\sup\Big(\|F_{\nabla_{t_i}}\|^2_{L^2(B_X(r))}+\|\langle u_{t_i},u_{t_i}\rangle\|^2_{L^2(B_X(r))}\Big)\geq \epsilon,$$
where $B_X(r)$ denotes the geodesic ball of radius $r$ centered at $X$.
\end{proposition}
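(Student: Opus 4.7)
The plan is to argue by contradiction using the blow-up machinery of Theorem~\ref{bll}, converting smallness of the $L^2$ energy on a fixed ball near the blow-up point into a blow-up limit that cannot exist, thanks to the rigid normalization built into the rescaling. So assume the conclusion fails: for every $\epsilon>0$ one can find a sequence $(x_i,t_i)\to(X,T)$ with $|F_{\nabla_{t_i}}(x_i)|+|u_{t_i}(x_i)|\to+\infty$ together with some $r_0>0$ for which
\begin{equation*}
\limsup_{i\to+\infty}\Big(\|F_{\nabla_{t_i}}\|^2_{L^2(B_X(r_0))}+\|\langle u_{t_i},u_{t_i}\rangle\|^2_{L^2(B_X(r_0))}\Big)<\epsilon.
\end{equation*}
The constant $\epsilon$ will be fixed only at the very end, small enough relative to absolute constants produced below.

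Applying the construction of Theorem~\ref{bll} with $\rho_i:=(|F_{\nabla_{t_i}}(x_i)|+|u_{t_i}(x_i)|^2)^{-(k+1)}\to0$, after passing to a subsequence and Uhlenbeck gauge patching the rescaled pairs $(\nabla^i_t,u^i_t)$ converge in $C^\infty_{\mathrm{loc}}$ to a smooth Yang-Mills-Higgs $1$-flow $(\nabla^\infty_t,u^\infty_t)$ on a backward space-time cylinder terminating at $t=0$, satisfying $|F_{\nabla^\infty_0}(0)|+|u^\infty_0(0)|^2=1$ together with the uniform pointwise bound $|F_{\nabla^\infty_t}|+|u^\infty_t|^2\leq1$. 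The critical point is that when $n=4$ and $k=1$ the quantities $\|F\|^2_{L^2(B)}$ and $\|\langle u,u\rangle\|^2_{L^2(B)}=\|u\|^4_{L^4(B)}$ are invariant under the rescaling of Proposition~\ref{rf}; a direct change of variables with parameter $\rho_i^{1/(2(k+1))}=\rho_i^{1/4}$ yields
\begin{equation*}
\|F_{\nabla^i_0}\|^2_{L^2(B_0(R))}=\|F_{\nabla_{t_i}}\|^2_{L^2(B_{x_i}(R\rho_i^{1/4}))},\qquad \|\langle u^i_0,u^i_0\rangle\|^2_{L^2(B_0(R))}=\|\langle u_{t_i},u_{t_i}\rangle\|^2_{L^2(B_{x_i}(R\rho_i^{1/4}))}.
\end{equation*}
For any fixed $R>0$ the shrinking ball $B_{x_i}(R\rho_i^{1/4})$ sits inside $B_X(r_0)$ for $i$ large, so each rescaled quantity is bounded by $\epsilon$ in the limit superior. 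Gauge-invariance of the two $L^2$ densities together with dominated convergence on compacta (justified by $C^\infty_{\mathrm{loc}}$ convergence plus the uniform pointwise bound) transfers the estimate to the limit, yielding $\|F_{\nabla^\infty_0}\|^2_{L^2(\mathbb{R}^4)}+\|\langle u^\infty_0,u^\infty_0\rangle\|^2_{L^2(\mathbb{R}^4)}\leq\epsilon$.

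To close the contradiction I would apply Corollary~\ref{ce1} to a unit-length time-translate of the limit with bump function supported in $B_0(1)$. Because $K=Q=1$ for the blow-up limit, this yields a universal Lipschitz bound $|\nabla F_{\nabla^\infty_0}|+|\nabla(|u^\infty_0|^2)|\leq C_0$ on $B_0(1/2)$, with $C_0=C_0(n,k)$ depending only on geometry and $k$. Combined with $|F_{\nabla^\infty_0}(0)|+|u^\infty_0(0)|^2=1$ this forces either $|F_{\nabla^\infty_0}|^2\geq 1/16$ or $|u^\infty_0|^4\geq 1/16$ throughout $B_0(r_1)$ with $r_1:=\min\{1/2,1/(8C_0)\}$, giving
\begin{equation*}
\|F_{\nabla^\infty_0}\|^2_{L^2(B_0(r_1))}+\|\langle u^\infty_0,u^\infty_0\rangle\|^2_{L^2(B_0(r_1))}\geq c_1 r_1^4
\end{equation*}
for an absolute $c_1>0$. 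Fixing $\epsilon<c_1 r_1^4$ at the outset contradicts the previous display.

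The main obstacle I anticipate is the passage to the limit through the gauge-patched sequence: one must verify that the scale-invariant rescaled $L^2$ quantities really do converge to those of $(\nabla^\infty,u^\infty)$ in spite of the gauge transformations required by Uhlenbeck's theorem. This is resolved because $\|F_\nabla\|_{L^2}^2$ and $\|\langle u,u\rangle\|_{L^2}^2$ are manifestly gauge-invariant and because $C^\infty_{\mathrm{loc}}$ convergence combined with the uniform pointwise bound $|F_{\nabla^i}|+|u^i|^2\leq1$ upgrades to $L^p_{\mathrm{loc}}$ convergence for every finite $p$. Everything else — the scale-invariance computation and the $\epsilon$-regularity extraction at the end — is a direct application of the rescaling and smoothing results already established.
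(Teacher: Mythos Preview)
Your argument is correct and uses essentially the same ingredients as the paper: the scale-invariance of $\|F\|_{L^2}^2$ and $\|\langle u,u\rangle\|_{L^2}^2$ when $n=4$ and $k=1$, the blow-up construction with its normalization $|F^\infty_0(0)|+|u^\infty_0(0)|^2=1$, and regularity of the limit to force a definite amount of energy near the origin. The only differences are packaging: the paper argues directly rather than by contradiction, and appeals to ``smoothness of the limit'' to find its $\delta$, whereas you make this quantitative via Corollary~\ref{ce1} --- which has the mild advantage of making the uniformity of $\epsilon$ explicit.
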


\begin{proof}
Choose a corresponding blow-up sequence $(\nabla_t^i,u_t^i)$ as described in Proposition \ref{bll} with limit $(\nabla_t^{\infty},u_t^{\infty})$. Then by \eqref{model},
 $$|F^{\infty}_0(0)|+|u_0^{\infty}(0)|^2=1.$$
By the smoothness of $(\nabla_t^{\infty},u_t^{\infty})$, one has that for $(y,t)\in B_0(\delta)\times (-\delta,0]$ we have
$$|F^{\infty}_t(y)|+|u_t^{\infty}(y)|^2\geq \frac{1}{2}.$$
Therefore,
$$\lim_{t\nearrow 0}\sup \Big(\|F^{\infty}_{t}\|^2_{L^2(B_0(\delta))}+\|\langle u^{\infty}_{t},u^{\infty}_{t}\rangle\|^2_{L^2(B_0(\delta))}\Big)\geq \frac{1}{8}\mathrm{Vol}[B_0(\delta)].$$
Conversely, using the computations in Theorem \ref{lptli},
\begin{equation*}
\begin{split}
&\|F^{\infty}_{t}\|^2_{L^2(B_0(\delta))}+\|\langle u^{\infty}_{t},u^{\infty}_{t}\rangle\|^2_{L^2(B_0(\delta))}\\
&\quad =\int_{B_0(\delta)} \lim_{i\rightarrow +\infty} (|F_{\nabla_t^i}|^2+|\langle u_t^i,u_t^i\rangle |^2)d\mathrm{vol}_g\\
&\quad =\lim_{i\rightarrow +\infty}\rho_i^{\frac{2\times 2-4}{2\times 1+2}}\Big(\|F_{\nabla_{t}}\|^2_{L^2(B_{x_i}(\delta\rho_i^{\frac{1}{4}}))}+\|\langle u_{t},u_{t}\rangle\|^2_{L^2(B_{x_i}(\delta\rho_i^{\frac{1}{4}}))}\Big)\\
&\quad =\lim_{i\rightarrow +\infty}\Big(\|F_{\nabla_{t}}\|^2_{L^2(B_{x_i}(\delta\rho_i^{\frac{1}{4}}))}+\|\langle u_{t},u_{t}\rangle\|^2_{L^2(B_{x_i}(\delta\rho_i^{\frac{1}{4}}))}\Big)
\end{split}
\end{equation*}
Since $\lim_{i\rightarrow +\infty}\rho_i^{\frac{1}{4}}=0$, then for any $r>0$ and $i$ large enough so that $\max\{|T-t_i|\}<\delta$,
$$\frac{1}{8}\mathrm{Vol}[B_0(\delta)]\leq \lim_{i\rightarrow +\infty} \sup\Big(\|F_{\nabla_{t_i}}\|^2_{L^2(B_X(r))}+\|\langle u_{t_i},u_{t_i}\rangle\|^2_{L^2(B_X(r))}\Big).$$
Taking $\epsilon=\frac{1}{8}\mathrm{Vol}[B_0(\delta)]$ yields the result.
\end{proof}

Note that the lower bound given by $\epsilon$ is independent of the point about which the blow-up procedure occurred. From Proposition \ref{cf} we have the following theorem.

\begin{theorem}
Let $(E,h)$ be a vector bundle over a closed Riemannian $4$-manifold $(M,g)$. Then there exists a unique smooth solution $(\nabla_t,u_t)$ to the Yang-Mills-Higgs $1$-flow \eqref{YMHKS}, with smooth initial value $(\nabla_0,u_0)$, existing on $[0,T)$ for some maximal $T\in\mathbb{R}_{>0}\cup \{+\infty\}$. If $T<+\infty$, then there exists a sequence $\{(x_i,t_i)\}\subset M\times [0,T)$ with $(x_i,t_i)\rightarrow (X,T)$ and for all $r>0$,
$$\lim_{i\rightarrow +\infty}\sup\Big(\|F_{\nabla_{t_i}}\|^2_{L^2(B_X(r))}+\|\langle u_{t_i},u_{t_i}\rangle\|^2_{L^2(B_X(r))}\Big)\geq \epsilon.$$

\end{theorem}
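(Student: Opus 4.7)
The plan is to assemble the theorem as a direct corollary of the tools already established. First, I would invoke the local existence theorem (Theorem \ref{le}) to produce a unique smooth solution $(\nabla_t,u_t)$ on some interval, and then define $T\in\mathbb{R}_{>0}\cup\{+\infty\}$ to be the supremum of times up to which a smooth solution exists; uniqueness on each short interval lets one glue these solutions into a single smooth flow on $[0,T)$.

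If $T<+\infty$, Theorem \ref{obst2} applied to the Yang-Mills-Higgs $1$-flow gives
\[
\sup_{M\times[0,T)}\bigl(|F_{\nabla_t}|+|u_t|\bigr)=+\infty.
\]
Therefore I can select a sequence $\{(x_i,t_i)\}\subset M\times[0,T)$ along which $|F_{\nabla_{t_i}}(x_i)|+|u_{t_i}(x_i)|\to +\infty$. By selecting $t_i$ to approach the supremum, one arranges $t_i\nearrow T$; then since $M$ is closed and hence compact, passing to a subsequence yields $x_i\to X$ for some $X\in M$, so that $(x_i,t_i)\to(X,T)$.

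At this point Proposition \ref{cf} applies directly to the sequence $(x_i,t_i)$: it furnishes a universal $\epsilon>0$ such that for every $r>0$,
\[
\limsup_{i\to+\infty}\Bigl(\|F_{\nabla_{t_i}}\|^2_{L^2(B_X(r))}+\|\langle u_{t_i},u_{t_i}\rangle\|^2_{L^2(B_X(r))}\Bigr)\geq \epsilon,
\]
which is exactly the claimed concentration estimate. This completes the proof.

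There is really no serious obstacle here; everything is done once the earlier blow-up machinery is invoked. The only mild subtlety is the passage from a blow-up sequence for the pointwise norm of $(F,u)$ to one converging to a single spatial point $X$, which is handled purely by compactness of $M$. It is worth noting that the uniform lower bound $\epsilon$ produced by Proposition \ref{cf} depends only on the volume of a small Euclidean ball coming from the smooth blow-up limit normalized by $|F^{\infty}_0(0)|+|u^{\infty}_0(0)|^2=1$, and hence is independent of the chosen blow-up point, so the statement is meaningful uniformly in the subsequence.
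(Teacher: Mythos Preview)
Your proposal is correct and follows the same route as the paper: the theorem is stated there as an immediate consequence of Proposition \ref{cf}, and you have simply filled in the straightforward details (local existence via Theorem \ref{le}, blow-up via Theorem \ref{obst2}, extraction of a convergent subsequence by compactness of $M$) that the paper leaves implicit. There is nothing to add.
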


\vskip 1cm

\section{Higher order Yang-Mills-Higgs functional with Higgs self-interaction} \label{hsi}

In \protect{\cite{JT}}, Jaffe and Taubes studied the following Yang-Mills-Higgs functional:
\begin{equation} \label{JTF}
\mathcal{YMH}(\nabla,u)=\frac{1}{2}\int_M\Big[| F_{\nabla}|^2+|\nabla u|^2+\frac{\lambda}{4}(|u|^2-1)^2\Big]d\mathrm{vol}_g,
\end{equation}
where the constant $\lambda\geq 0$. The term $\frac{\lambda}{8}(|u|^2-1)^2$ is the Higgs self-interaction.

Following the formal sections, we will consider the following Yang-Mills-Higgs $k$-functional with Higgs self-interaction:
\begin{equation} \label{JTF1}
\mathcal{YMH}_k(\nabla,u)=\frac{1}{2}\int_M\Big[|\nabla^{(k)}F_{\nabla}|^2+|\nabla^{(k+1)}u|^2+\frac{\lambda}{4}(|u|^2-1)^2\Big]d\mathrm{vol}_g.
\end{equation}

The associated negative gradient flow of \eqref{JTF1} is the following system
\begin{equation} \label{JTF2}
\begin{cases}
\frac{\partial \nabla_t}{\partial t}&=(-1)^{(k+1)}D^*_{\nabla_t}\Delta_{\nabla_t}^{(k)}F_{\nabla_t}+\sum\limits_{v=0}^{2k-1}P_1^{(v)}[F_{\nabla_t}]\\
&~~+P_2^{(2k-1)}[F_{\nabla_t}]+\sum\limits_{i=0}^k\nabla_t^{*(i)}(\nabla_t^{(k+1)}u_t\ast \nabla_t^{(k-i)}u_t),\\
\frac{\partial u_t}{\partial t}&=-\nabla^{*(k+1)}_t\nabla_t^{(k+1)}u_t+\frac{\lambda}{2}(1-|u_t|^2)u_t.
\end{cases}
\end{equation}

Now we can follow the line of the study of the flow \eqref{YMHKS}.

(1) First of all, the local existence and smoothing estimates can be achieved in a similar way, and we have a same obstruction (Theorem \ref{obst2}) for the long time existence.

(2) It is easy to check the \eqref{JTF1} is decreasing along the flow \eqref{JTF2}.

(3) One can check that $\|u_t\|_{L^2}$ is bounded along the flow \eqref{JTF2}.

(4) After setting $\dim(M)<2(k+1)$, we have the Sobolev embedding $W^{k+1,2}\subset C^0$. Noting that we have $\|\nabla^{(k+1)}u\|^2_{L^2}$ in \eqref{JTF1}. So the $C^0$-bound of $u_t$ can be controlled by  \eqref{JTF1} by using Lemma \ref{ii}.

(5) Once we have $C^0$-bound of $u_t$, it is easy to prove \eqref{JTF} is bounded along the flow \eqref{JTF2}.

(6) Since $u_t$ is bounded along the flow \eqref{JTF2} when $\dim(M)<2(k+1)$, the obstruction to the long-time existence only depends on $F_{\nabla_t}$. Choose a corresponding blow-up sequence $(\nabla_t^i,u_t^i)$ as described in Theorem \ref{bll}. Thus $(\nabla_t^i,u_t^i)$ converges to $(\nabla_t^{\infty},0)$ smoothly. $u_t^{\infty}=0$ since $u_t$ is bounded.

(7) Using the blow-up analysis, we can control the $C^0$-bound of $F_{\nabla_t}$ by its $L^p$-bound with $\dim(M)<2p$.

(8) At last, when $\dim(M)<2(k+2)$, the $C^0$-bound of $F_{\nabla_t}$ can be derived by Lemma \ref{ii}, the blow-up analysis and the Sobolev embedding $W^{k,2}\subset L^p$ with $\dim(M)<2p$. Therefore, we conclude the following theorem:

\begin{theorem} \label{mth2}
Let $(E,h)$ be a vector bundle over a closed Riemannian manifold $(M,g)$. Assume the integer $k$ satisfies $\dim(M)<2(k+1)$, then there exists a unique smooth solution $(\nabla_t,u_t)$ to the Yang-Mills-Higgs $k$-flow \eqref{JTF2} in $M\times [0,+\infty)$ with smooth initial value $(\nabla_0,u_0)$.
\end{theorem}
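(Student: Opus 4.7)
The plan is to adapt the eight-step template sketched by the author, mirroring the proof of Theorem \ref{mth} but exploiting the Higgs self-interaction to gain control on $u_t$ in lower dimensions. First I would observe that short-time existence (Theorem \ref{le}) and the smoothing estimates of Section \ref{se} carry over essentially verbatim, since the new term $\frac{\lambda}{2}(1-|u_t|^2)u_t$ is of zeroth order in derivatives and polynomial in $u_t$: it does not affect the principal symbol of the gauge-fixed system and only contributes lower-order pieces in the evolution equations of $\nabla_t^{(l)}u_t$ and $\nabla_t^{(l)}F_{\nabla_t}$ that are absorbed by Lemma \ref{ii}. Consequently, the obstruction criterion analogous to Theorem \ref{obst2} says that long-time existence fails only if $\sup_{M\times[0,T)}(|F_{\nabla_t}|+|u_t|)=+\infty$.

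Second, I would establish two a priori bounds. A direct computation as in Proposition \ref{hoymhe} shows that $\mathcal{YMH}_k(\nabla_t,u_t)$ is monotone non-increasing, since the extra term is precisely the functional-gradient contribution from the potential. In particular $\int_M(|u_t|^2-1)^2\, d\mathrm{vol}_g$ is uniformly bounded, which upgrades to uniform bounds on $\|u_t\|_{L^4}$ and $\|u_t\|_{L^2}$ independent of dimension, together with a uniform $L^2$-bound on $\nabla_t^{(k+1)}u_t$.

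Third, under the hypothesis $\dim(M)<2(k+1)$ the Sobolev embedding $W^{k+1,2}\hookrightarrow C^0$ holds. Combining this with Lemma \ref{ii} and the previous bounds yields a uniform $C^0$-bound on $u_t$. Once $u_t$ is bounded in $C^0$, the argument of Proposition \ref{ymhe} adapts immediately (the new potential term contributes only a harmless, uniformly bounded addend) to give a uniform bound on $\mathcal{YMH}(\nabla_t,u_t)$, and hence on $\|F_{\nabla_t}\|_{L^2}$, along the flow.

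Finally, to rule out blow-up of $F_{\nabla_t}$ I would run the blow-up analysis of Section \ref{ba}. If the curvature blew up, Theorem \ref{bll} would produce a smooth blow-up limit $(\nabla_t^\infty,u_t^\infty)$ on $\mathbb{R}^n\times\mathbb{R}_{<0}$; since $u_t$ is already uniformly bounded and the scaling factor tends to zero, the rescaled Higgs fields would converge to $u_t^\infty\equiv 0$, forcing $|F_0^\infty(0)|=1$. Applying Fatou and the scaling law as in Proposition \ref{lptli} for some exponent $p$ with $2p>\dim(M)$ then produces a contradiction, provided a uniform $L^p$-bound on $F_{\nabla_t}$ is at hand. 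Such a bound is obtained by combining interpolation (Lemma \ref{ii}) with the Sobolev embedding $W^{k,2}\hookrightarrow L^p$ for some $p>\dim(M)/2$, and the hypothesis $\dim(M)<2(k+1)$ is exactly what allows both this embedding and the earlier $W^{k+1,2}\hookrightarrow C^0$ to be invoked simultaneously. The main obstacle is therefore not a single new estimate but the bookkeeping of Sobolev exponents: verifying that the one dimensional hypothesis $\dim(M)<2(k+1)$ closes both the $C^0$-bound on $u_t$ in step three and the $L^p$-to-$L^\infty$ promotion of $F_{\nabla_t}$ in step four, after which the continuation criterion finishes the proof.
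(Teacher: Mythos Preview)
Your proposal is correct and follows essentially the same eight-step template the paper sketches: short-time existence and smoothing estimates carry over unchanged, monotonicity of $\mathcal{YMH}_k$ yields $L^2$ (and $L^4$) control on $u_t$, the embedding $W^{k+1,2}\hookrightarrow C^0$ under $\dim(M)<2(k+1)$ gives the $C^0$-bound on $u_t$, the Yang--Mills--Higgs energy is then bounded, and the blow-up argument with $W^{k,2}\hookrightarrow L^p$ for some $p>\dim(M)/2$ upgrades the $L^p$-bound on $F_{\nabla_t}$ to $L^\infty$. The only cosmetic difference is that you extract the $L^4$-bound on $u_t$ directly from the self-interaction term, whereas the paper phrases step (3) as an $L^2$-bound; both suffice.
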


\begin{remark}
This proof does not conclude that the flow exists at $t=+\infty$, so the Yang-Mills-Higgs $k$-flow \eqref{JTF2} may admit singularities at infinite time. The critical dimension $2(k+1)$ may not be optimal when the Higgs self-interaction is not vanishing. In this case, we have $\|u_t\|_{L^4}^4$ in $\mathcal{YMH}_k(\nabla_t,u_t)$, which is better than $\|u_t\|_{L^2}^2$. Maybe one can lower down the order of derivative of $u_t$ in $\mathcal{YMH}_k(\nabla_t,u_t)$ to obtain a $C^0$-bound for $u_t$.
\end{remark}

\begin{remark}
If $\dim(M)<2(k+1)$, the long time existence of the higher order Seiberg-Witten flow studied in \cite{Sa} can be proved like Theorem \ref{mth2}.
\end{remark}

\vskip 1cm

{\bf Acknowledgement}: The author would like to thank Dr. Saratchandran for his patience on explaining the paper \cite{Sa}. The author is supported by the Fundamental Research Funds for the Central Universities (No. 19lgpy239).

\vskip 1cm

\end{document}